 \newtheorem{theorem}{Theorem}[section]
 \newtheorem{thm}[theorem]{Theorem}
 \newtheorem{obs}[theorem]{Observation}
 \newtheorem{lem}[theorem]{Lemma}
 \newtheorem{cor}[theorem]{Corollary}
 \newtheorem{prop}[theorem]{Proposition}
 \newtheorem{quest}[theorem]{Question}
 \newtheorem{conj}[theorem]{Conjecture}
\DeclareMathOperator\one{\bf{1}}
\DeclareMathOperator\fld{\mathbb{F}}
\DeclareMathOperator\flde{\mathbb{E}}
\DeclareMathOperator\Ind{ind}
\DeclareMathOperator\Res{res}
\DeclareMathOperator\der{der}
\DeclareMathOperator\id{id}
\DeclareMathOperator\fix{fix}
\DeclareMathOperator\sym{Sym}
\DeclareMathOperator\AGL{AGL}
\DeclareMathOperator\GL{GL}
\DeclareMathOperator\SL{SL}
\DeclareMathOperator\PGL{PGL}
\DeclareMathOperator\PSL{PSL}
\DeclareMathOperator\stab{\agl{q}_B}
\DeclareMathOperator{\ag}{AG}
\newcommand\agl[1]{\AGL(2, #1)}
\newcommand\gl[1]{\GL(2, #1)}
\newcommand\slg[1]{\SL(2, #1)}
\newcommand\pgl[1]{\PGL(2, #1)}
\newcommand\psl[1]{\PSL(2, #1)}
\begin{document}
\title[EKR results for some linear groups]{Erd\H{o}s-Ko-Rado results for the general linear group, the special linear group and the affine general linear group}

\author[K.~Meagher]{Karen Meagher${^*}$ }
\address{Department of Mathematics and Statistics, University of Regina, Regina, Saskatchewan
S4S 0A2, Canada}
\email{meagherk@uregina.ca}
\thanks{${^*}$Research supported in part by an NSERC Discovery Research Grant,
    Application No.: RGPIN-2018-03952.}

\author[A.~Sarobidy Razafimahatratra]{A. Sarobidy Razafimahatratra}
\address{Department of Mathematics and Statistics, University of Regina, Regina, Saskatchewan
S4S 0A2, Canada}
\email{sarobidy@phystech.edu}

\begin{abstract}
In this paper, we show that both the general linear group $\gl{q}$ and the special linear group $\slg{q}$ have both the EKR property and the EKR-module property. This is done using an algebraic method; a weighted adjacency matrix for the derangement graph for the group is found and Hoffman's ratio bound is applied to this matrix. We also consider the group $\agl{q}$ and the 2-intersecting sets in $\PGL(2,q)$.
\end{abstract}
    
\keywords{derangement graph, independent sets, Erd\H{o}s-Ko-Rado Theorem, Symmetric Group, General linear group, Affine linear group, Projective linear group}

\subjclass[2010]{Primary 05C35; Secondary 05C69, 20B05}

\maketitle

\section{Introduction}\label{introduction}

In this paper we will examine some Erd\H{o}s-Ko-Rado properties for the general linear
group $\gl{q}$, the special linear group $\slg{q}$ and the affine general linear group $\agl{q}$. 
The Erd\H{o}s-Ko-Rado (EKR) theorem for permutation groups has to do with finding maximum sets of permutations within a group so that any two permutations in the set \textsl{intersect}. Let $G\leq \sym(n)$ be a transitive group of degree $n$ and acting on the set $[n]:= \left\{ 1,2, \ldots, n \right\}$. Permutations $g$ and $h$ of $G$ \textsl{intersect} if there exists some $i \in  \{1,\dots, n\}$ such that $g(i) = h(i)$, or equivalently, 
$h^{-1} g (i) = i$. If $g$ and $h$ do not intersect, then $h^{-1} g$ is a \textsl{derangement}.

For any positive integers $n$ and $k$ such that $2k\leq n$, the original EKR theorem~\cite{erdos1961intersection} gives the size of the largest collection of $k$-uniform intersecting sets of $[n]$. The maximum size is achieved by taking all subsets that contain a fixed point. These collections are called the \textsl{canonical} intersecting set systems. Analogously, the \textsl{canonical intersecting sets of permutations} from a transitive group $G\leq \sym(n)$ are the sets of all permutations of $G$ that map some $i$ to some $j$, where $i,j\in [n]$. The stabilizer in a group $G$ of a point $i$ will be denoted by $G_i$. The canonical intersecting sets in $G$ are the sets
\[
S_{i,j}:=\{g\in G\; : \; g(i)=j\};
\]
these are exactly the cosets of the point-stabilizers $G_i$. These are clearly
intersecting sets of permutations and it is easy to see that if $G$ is
a transitive group with degree $n$, then $|S_{i,j}|=\frac{|G|}{n}$. 

If $S$ is an intersecting set in $G$, then for any element $g \in G$ the set $gS$ is also intersecting set; in particular, if $S$ is a canonical intersecting set, then $gS$ is also a canonical intersecting set. 
The \textsl{characteristic vector} of a set $S$ of permutations in a group $G$ is a length-$|G|$ vector 
with entries indexed by the elements of the group and the $g$-entry is 1 if $g \in S$ and 0 otherwise. 
We will denote the characteristic vector of $S_{i,j}$ by $v_{i,j}$ (assuming that the
group $G$ is clear from context). The group $G$ acts on these vectors by its action on the indices. Under this action, the vector space spanned by the characteristic vectors of the canonical intersecting sets in a group $G$ is a $\mathbb{C}[G]$-module. We call this module the \textsl{EKR-module}.

In this paper we consider the following three properties of a finite transitive group.

\begin{enumerate}
\item {\bf EKR-property}: A group has this property if the size of a
  maximum intersecting set of permutations is the size of a canonical
  intersecting set. 
\item {\bf EKR-module property}: A group has this property if the characteristic vector of any maximum intersecting set is in the EKR-module.
\item {\bf Strict-EKR property}: A group has the strict EKR property if the canonical intersecting sets are the only maximum intersecting sets.
\end{enumerate}

The first result on the EKR-property for permutations dates back to the 1977 paper of Deza and Frankl~\cite{Frankl1977maximum}. The EKR-property and the strict-EKR property have been an active research area over the past decade~\cite{ahmadi2014new, ellis2012setwise, ellis2011intersecting, godsil2009new, meagher2011erdHos, spiga2019erdHos}.  The EKR-module property is a fairly new concept introduced by Ahmadi and Meagher in~\cite{meagher2015erdos}. We state an observation on the EKR-module property.

\begin{obs}
Let $G$ be a transitive group. If $G$ has the EKR-module property, then the characteristic vector of any maximum coclique is a linear combination of the characteristic vectors of the canonical coclique.
\end{obs}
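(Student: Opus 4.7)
The observation as stated is essentially a reformulation of the definition, so my plan would be almost entirely one of unpacking notation rather than developing new content.

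First I would identify that in this context a \emph{coclique} in the derangement graph $\Gamma_G$ (vertex set $G$, with $g \sim h$ iff $h^{-1}g$ is a derangement) is precisely an intersecting set of permutations of $G$: by definition $g$ and $h$ are non-adjacent in $\Gamma_G$ exactly when $h^{-1}g$ fixes some point, i.e.\ when $g$ and $h$ intersect. Consequently, a \emph{canonical coclique} is exactly a canonical intersecting set $S_{i,j} = \{ g \in G : g(i) = j\}$, and a \emph{maximum coclique} is a maximum intersecting set in $G$. This identification is the content of the first step of the proof.

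Next I would invoke the definition of the EKR-module directly: it is the $\mathbb{C}[G]$-submodule of $\mathbb{C}^{|G|}$ spanned by the characteristic vectors $v_{i,j}$ of the canonical intersecting sets $S_{i,j}$, where $i,j \in [n]$. Thus an element of the EKR-module is, by the very definition of span, a $\mathbb{C}$-linear combination of the vectors $v_{i,j}$.

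Finally, I would assemble: assume $G$ has the EKR-module property, and let $S \subseteq G$ be a maximum coclique in $\Gamma_G$ with characteristic vector $v_S$. By step one, $S$ is a maximum intersecting set in $G$. By the EKR-module property, $v_S$ lies in the EKR-module, and by step two this precisely means that there exist scalars $c_{i,j} \in \mathbb{C}$ such that
\[
v_S = \sum_{i,j \in [n]} c_{i,j}\, v_{i,j},
\]
which is the claimed conclusion.

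There is no real obstacle here: the only step requiring any care is making explicit the identification between cocliques of the derangement graph and intersecting sets of permutations, which is the reason the observation is a genuine statement about cocliques rather than only about intersecting sets. Once that identification is spelled out, the rest is the definition of span.
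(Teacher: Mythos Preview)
Your proposal is correct. The paper states this observation without proof, since it is an immediate consequence of the definitions of the EKR-module and the EKR-module property; your unpacking of the identification between cocliques and intersecting sets, together with the definition of span, is exactly the intended content.
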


It was shown in~\cite{meagher2016erdHos} that any 2-transitive group has the EKR-property, and in~\cite{meagher2021all} it was further shown that any 2-transitive group has the EKR-module property. There are several papers showing different 2-transitive groups have the strict-EKR property \cite{godsil2009new,meagher2011erdHos,spiga2019erdHos}. In many of these papers, the result follows using an algebraic method that relies on the 2-transitivity of the group. In this paper we will show that this approach can be used for $\gl{q}$ and $\slg{q}$ (which are transitive but not $2$-transitive), where $q$ is a prime power.

\begin{thm}
	For any prime power $q$, the groups $\gl{q}$ and $\slg{q}$ with their natural action on $\mathbb{F}_q^2 \setminus \{0\}$ have the EKR-property and the EKR-module property. Moreover, $\gl{q}$ does not have the strict-EKR property.
\end{thm}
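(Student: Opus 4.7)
The plan is to apply the algebraic framework announced in the abstract: a weighted adjacency matrix for the derangement graph $\Gamma_G$ is built, and Hoffman's ratio bound is shown to be tight at $|G|/n$, where $G \in \{\gl{q}, \slg{q}\}$, $X = \fld_q^2 \setminus \{0\}$, and $n = |X| = q^2 - 1$. Maximum intersecting sets in $G$ correspond to maximum cocliques in $\Gamma_G$, and canonical intersecting sets have size $|G|/n$.

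First I would form
\[
A \;=\; \sum_{C} w_C \, A_C,
\]
where $A_C$ is the $0/1$ matrix with $(A_C)_{g,h} = 1$ iff $g^{-1}h \in C$, the sum runs over conjugacy classes $C \subseteq G$ consisting of derangements, and $w_C \ge 0$. Since $A$ lies in the commutant of the regular representation, its eigenvalues are indexed by irreducible characters of $G$ and given by
\[
\lambda_\chi \;=\; \frac{1}{\chi(1)} \sum_{C} w_C \, |C| \, \chi(c_C).
\]
The trivial character yields the weighted degree $d = \sum_C w_C |C|$. Using the character tables of $\gl{q}$ and $\slg{q}$, the weights would be tuned so that the least eigenvalue $-\tau$ of $A$ satisfies $d = (n-1)\tau$. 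Hoffman's ratio bound then gives $\alpha(\Gamma_G) \le |G|/n$, matched by the canonical intersecting sets; this establishes the EKR-property.

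For the EKR-module property, the key observation is that equality in Hoffman's bound forces the characteristic vector of any maximum intersecting set $S$ to decompose as $v_S = \frac{|S|}{|G|}\one + u$ with $u$ lying in the $(-\tau)$-eigenspace of $A$. The plan is to show that this eigenspace, together with $\spn(\one)$, coincides with the EKR-module $\spn\{v_{i,j}\}$. Concretely, one decomposes the permutation character of $G$ on $X$ into irreducibles (it splits into more constituents than in the 2-transitive case, since for $q \ge 3$ the action is not 2-transitive) and verifies the weights can be chosen so that precisely these constituents realize $\lambda_\chi = -\tau$. This matching is the main obstacle: it requires carefully exploiting the structure of the character values of $\gl{q}$ and $\slg{q}$ on derangement classes and is considerably more delicate than in the 2-transitive setting of earlier work.

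Finally, to show that $\gl{q}$ lacks the strict-EKR property for $q \ge 3$, I would exhibit the subgroup
\[
K \;=\; \left\{ \begin{pmatrix} a & b \\ 0 & 1 \end{pmatrix} : a \in \fld_q^*,\ b \in \fld_q \right\} \le \gl{q}
\]
of order $q(q-1) = |G|/n$. Every element of $K$ has $1$ on its diagonal, hence $1$ as an eigenvalue and a nonzero fixed vector, so no element of $K$ is a derangement; since $K$ is a subgroup, $g^{-1}h \in K$ for all $g,h \in K$, making $K$ an intersecting set of the maximum size. To see that $K$ is not canonical, note that it contains the identity, so $K = S_{i,j}$ would force $K$ to be a point stabilizer $S_{u,u}$; however $\begin{pmatrix} a & 0 \\ 0 & 1 \end{pmatrix}$ with $a \ne 1$ fixes only $\spn(e_2)$, while $\begin{pmatrix} 1 & 1 \\ 0 & 1 \end{pmatrix}$ fixes only $\spn(e_1)$, so no nonzero vector is fixed by every element of $K$.
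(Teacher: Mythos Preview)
Your overall strategy matches the paper's for the EKR-property (weighted Hoffman bound) and for the failure of strict-EKR (your subgroup $K$ is precisely the paper's $H_\ell$ for $\ell=\langle e_1\rangle$). For $\slg{q}$ the EKR-module argument also goes through as you describe: the paper exhibits weights for which the $(-1)$-eigenspace is exactly the permutation module.

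The genuine gap is in the EKR-module property for $\gl{q}$. You propose to choose weights so that \emph{precisely} the non-trivial constituents of the permutation character realise the least eigenvalue. The paper's computations show this does not happen: with the weighting that makes the ratio bound tight, the $(-1)$-eigenspace also contains the modules for $\rho'(\alpha)$ with $\alpha^2=1\neq\alpha$, for $\overline{\rho}(\alpha)$ with $\alpha^2=1\neq\alpha$, and for $\rho(\mu)$ with $\mu=(\overline{\beta},\beta)$, none of which occur in the permutation module. There is no indication that a different weighting avoids these extra modules, and your plan gives no mechanism for ruling them out.

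The paper supplies two further ideas you are missing. First, it uses a Singer subgroup of order $q^2-1$ as a maximum clique; equality in the clique--coclique bound then forces the projection of any maximum coclique onto a module $V_\psi$ to vanish whenever $\psi$ does not vanish on that clique, and a direct character computation disposes of the $\rho(\overline{\beta},\beta)$ and $\overline{\rho}(\alpha)$ modules. Second, to eliminate the degree-one representation $\rho'(\alpha)$ with $\alpha^2=1$, the paper first proves the EKR-property for $\slg{q}$ and uses it to show that a maximum coclique in $\gl{q}$ meets every $\slg{q}$-coset in exactly $q$ elements; since $\rho'(\alpha)$ is constant on $\slg{q}$-cosets and sums to zero over a transversal, the projection vanishes. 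Only after these two steps does the characteristic vector land in the permutation module, and a separate rank computation then identifies that module with the span of the canonical vectors $v_{i,j}$.
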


The EKR-property for $\gl{q}$ was first proved in~\cite{MR3188504}. However, this paper claimed that $\gl{q}$ also has strict-EKR property, unfortunately the proof of this contained an error, as there is a maximum intersecting set of $\gl{q}$ which is a subgroup and is not equal to a point-stabilizer (a corrected version should appear soon~\cite{ahanjidehCommunication}). 
The proof in~\cite{MR3188504} relies on the existence of Singer subgroups of $\gl{q}$, i.e., regular subgroups of $\gl{q}$, to prove the EKR-property. Our proof also uses the Singer subgroups, along with Hoffman's ratio bound on a weighted adjacency matrix. This approach puts into perspective the deep algebraic properties behind the EKR property. A similar method is applied to $\slg{q}$.

We also consider the affine group $\agl{q}$, where $q$ is a prime power.  The action of $\agl{q}$ on the points of the affine plane is $2$-transitive. Therefore, by~\cite{meagher2021all} and~\cite{meagher2016erdHos}, it has both the EKR-property and the EKR-module property. In this paper, we prove however that the action of $\agl{q}$ on the lines of the affine plane does not have the EKR property.

\begin{thm}
	For any prime power $q$, the group $\agl{q}$ acting on the lines of the affine plane does not have the EKR property. However, if $\mathcal{F} \subset \agl{q}$ is intersecting, then $|\mathcal{F}|\leq q^3(q-1)^2$.\label{thm:AGL}
\end{thm}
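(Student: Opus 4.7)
The plan has two independent parts: proving the upper bound $|\mathcal{F}| \leq q^3(q-1)^2$ and exhibiting an intersecting family strictly larger than the canonical bound $q^2(q-1)^2$ to conclude that the EKR property fails.

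For the upper bound, the main idea is to exploit the natural quotient $\pi : \agl{q} \to \pgl{q}$ coming from the action of $\agl{q}$ on the $q+1$ parallel classes of the affine plane. The kernel $K = \ker\pi$ consists of the maps $x \mapsto \lambda x + b$ with $\lambda \in \fld_q^*$ and $b \in \fld_q^2$, so $|K| = q^2(q-1)$, and the induced $\pgl{q}$-action on parallel classes is the standard 2-transitive action on $q+1$ points. I would first establish that if $g, h \in \agl{q}$ agree on a line $\ell$, they send the parallel class $P$ of $\ell$ to the same parallel class, and hence $\pi(g)$ and $\pi(h)$ agree on $P$. Consequently, for any intersecting family $\mathcal{F} \subseteq \agl{q}$, its projection $\pi(\mathcal{F})$ is intersecting in $\pgl{q}$. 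Applying the EKR theorem for 2-transitive groups to $\pgl{q}$ gives $|\pi(\mathcal{F})| \leq |\pgl{q}|/(q+1) = q(q-1)$, and combining with the trivial fiberwise bound $|\mathcal{F} \cap \pi^{-1}(\sigma)| \leq |K|$ yields $|\mathcal{F}| \leq q^3(q-1)^2$.

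To show that EKR fails, I would exhibit an intersecting family larger than $q^2(q-1)^2$. A first observation is that $K$ itself is intersecting: a non-trivial translation $(I, b)$ fixes every line in the direction of $b$, while $(\lambda I, b)$ with $\lambda \neq 1$ has unique fixed point $p^* = (1-\lambda)^{-1}b$ and thereby fixes the $q+1$ lines through $p^*$. To beat the canonical $q^2(q-1)^2$, I would combine $K$ with $\pi^{-1}(\sigma)$ for $\sigma \in \pgl{q}$ of a suitable type. A direct computation shows that $\pi^{-1}(\sigma)$ is entirely free of derangements precisely when $\sigma$ is semisimple with two $\fld_q$-rational fixed points (a ``diagonal'' element of $\pgl{q}$); by contrast, fibers over unipotent elements contain derangements, and fibers over irreducible elements consist entirely of derangements. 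One then selects a small subset $T \subseteq \pgl{q}$ of such diagonal elements so that $\pi^{-1}(T)$, together with $K$ and possibly partial fibers over other elements, remains intersecting after all ratios are accounted for.

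The principal obstacle is the EKR-failure construction: the upper bound via the quotient is clean, but producing an explicit intersecting family larger than $q^2(q-1)^2$ requires careful bookkeeping of which elements in which $\pi$-fibers combine without introducing a derangement ratio. In particular, one must track how non-diagonal products of diagonal elements (or ratios between kernel elements and lifts of diagonal elements) may escape the set of ``good'' fibers — this entails detailed analysis of the conjugacy structure of $\pgl{q}$ together with the classification of fixed lines of affine transformations, which is precisely where the subtlety of the theorem lies.
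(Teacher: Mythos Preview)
Your upper-bound argument is correct and is a pleasant alternative to the paper's route. The paper produces an explicit clique of size $q+1$ in $\Gamma_{\agl{q}}$ (a cyclic subgroup acting as a $(q+1)$-cycle on the parallel classes) and applies the clique--coclique bound. Your approach instead pushes an intersecting family down along $\pi:\agl{q}\to\pgl{q}$, invokes the EKR property of the $2$-transitive group $\pgl{q}$ on $q+1$ points, and multiplies by $|K|$. Both give $q^3(q-1)^2$; yours is more conceptual, the paper's more self-contained.

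For the failure of EKR, however, your proposal has a genuine gap. You correctly observe that $K=\pi^{-1}(1)$ is intersecting and that a fiber $\pi^{-1}(\sigma)$ is derangement-free precisely when $\sigma$ has two rational fixed points. But you stop at ``one then selects a small subset $T$ of such diagonal elements'' and flag the bookkeeping of ratios as the obstacle. That obstacle is the entire content of this half of the theorem. Two things are missing. First, the clean reduction: an element of $\agl{q}$ that fixes \emph{two} parallel classes necessarily fixes a line (the paper proves this as a short lemma), so $\bigcup_{\sigma\in S}\pi^{-1}(\sigma)$ is intersecting in $\agl{q}$ exactly when $S$ is \emph{$2$-intersecting} in $\pgl{q}$. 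Your worry about ``non-diagonal products of diagonal elements'' is precisely the statement that $S$ must be $2$-intersecting, not merely a set of diagonal elements. Second, an explicit $2$-intersecting set $S\subseteq\pgl{q}$ with $|S|>q-1$ must actually be constructed; the paper does this carefully (size $(3q-5)/2$ for $q$ odd, $(3q-4)/2$ for $q$ even) by taking, from each diagonal conjugacy class, a conjugation-orbit of three elements fixing two of three chosen points, and checking pairwise $2$-intersection. Without such a construction you have not shown that the canonical bound $q^2(q-1)^2$ is beaten.
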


\section{Module Method}

Throughout this section, we let $G$ be a transitive permutation group of degree $n$. The \textsl{derangement graph} of $G$,
denoted by $\Gamma_G$, has the elements of $G$ as its vertices and two
vertices are adjacent if and only if they are not intersecting. The graph $\Gamma_G$ is the Cayley graph of $G$, with connection set equal to the set of all derangements of $G$; we denote this set by $\der(G)$. The derangement graph $\Gamma_G$ is defined so 
that a set $S$ of $G$ forms a coclique in $\Gamma_G$ if and only if $S$ is an intersecting set of $G$.

The \textsl{adjacency matrix} $A(X)$ of a graph $X$ is a matrix in which rows and columns are indexed by the vertices of $X$ and the $(i ,j)$-entry is 1 if $i\sim j$, and 0 otherwise. The \textsl{eigenvalues} of the graph are the eigenvalues of its adjacency matrix. 

For any group $G \leq \sym(n)$, the connection set of  the Cayley graph $\Gamma_G$ is the union of all the conjugacy classes of derangements of $G$, so $\Gamma_G$ is a \textsl{normal} Cayley graph. The eigenvalues of this
graph can be calculated using the irreducible representations of $G$. This follows from Babai's formula for the eigenvalues of normal Cayley graphs \cite{babai1979spectra} (this is also in Diaconis and Shahshahani~\cite{diaconis1981generating}). 

\begin{thm}\label{Diaconis} 
Let $G$ be a group. The eigenvalues of the Cayley graph $\Gamma_G$ are given by
\[
\eta_{\chi}=\frac{1}{\chi(\id)}\sum_{x \in \der(G)}\chi(x),
\]
where $\chi$ ranges over all irreducible characters of $G$. Moreover,
the multiplicity of the eigenvalue $\lambda$ is
$\sum_{\chi}\chi(\id)^2$, where the sum is taken over all irreducible
representations $\chi$ with $\eta_\chi = \lambda$.
\end{thm}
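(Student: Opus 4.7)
The plan is to identify the adjacency matrix $A(\Gamma_G)$ with the regular representation of the group-algebra element $D := \sum_{x \in \der(G)} x \in \mathbb{C}[G]$; under the natural identification of $\mathbb{C}[G]$ with $\mathbb{C}^G$, multiplication by $D$ agrees (up to the left/right convention for the Cayley graph) with the action of $A(\Gamma_G)$, so the two operators share the same spectrum. The first key observation is that $\der(G)$ is closed under conjugation: $hgh^{-1}$ fixes $h(i)$ if and only if $g$ fixes $i$, so $\der(G)$ is a union of conjugacy classes, and therefore $D$ lies in the centre $Z(\mathbb{C}[G])$.

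Next, invoke the Artin-Wedderburn decomposition
\[
\mathbb{C}[G] \;\cong\; \bigoplus_{\chi} \mathrm{End}(V_\chi),
\]
where $\chi$ runs over the irreducible characters of $G$ and $V_\chi$ affords $\chi$, with dimension $d_\chi := \chi(\id)$. Because $D$ is central, Schur's lemma forces $D$ to act on each irreducible $V_\chi$ as a scalar $\eta_\chi I_{V_\chi}$. Taking traces of this action gives
\[
d_\chi \, \eta_\chi \;=\; \sum_{x \in \der(G)} \chi(x),
\]
which rearranges to the formula claimed for $\eta_\chi$. For the multiplicity, recall that as a $\mathbb{C}[G]$-module the regular representation decomposes as $\bigoplus_\chi d_\chi V_\chi$, so the scalar $\eta_\chi$ is attained on a subspace of total dimension $d_\chi \cdot d_\chi = \chi(\id)^2$. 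Summing these contributions over all irreducibles sharing a common value $\eta_\chi = \lambda$ yields the stated multiplicity.

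The only real obstacle is conceptual: one must be comfortable translating the combinatorial adjacency matrix into a central element of $\mathbb{C}[G]$, which requires carefully checking that $\der(G)$ is conjugation-invariant and that the chosen Cayley-graph convention really does match multiplication in $\mathbb{C}[G]$. Once that identification is in place, the remainder of the argument is an immediate application of semisimple structure theory together with the decomposition of the regular representation, and no delicate computation is required.
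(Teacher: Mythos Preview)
Your argument is correct and is essentially the standard proof of this result. Note, however, that the paper does not supply its own proof of this theorem: it is quoted as a known fact, attributed to Babai~\cite{babai1979spectra} and Diaconis--Shahshahani~\cite{diaconis1981generating}, and then used as a black box. Your approach---identifying $A(\Gamma_G)$ with the action of the central element $D=\sum_{x\in\der(G)}x$ on the regular representation, then invoking Schur's lemma and the isotypic decomposition $\mathbb{C}[G]\cong\bigoplus_\chi d_\chi V_\chi$---is exactly the argument one finds in those references or in standard treatments of normal Cayley graphs, so there is nothing to contrast. One small point worth making explicit: in addition to conjugation-invariance you are implicitly using that $\der(G)$ is inverse-closed (the inverse of a derangement is a derangement), which is what guarantees the Cayley graph is undirected and that the left/right conventions agree.
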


The largest eigenvalue of $\Gamma_G$ is $|\der(G)|$. This is the valency of the derangement graph and it is afforded by the trivial character. Once we have the eigenvalues of a Cayley graph, we can apply the ratio bound to find an upper bound on the size of a maximum coclique, this bound is also known as Hoffman's bound and Delsarte's bound (a history of this bound can be found in the lovely paper by Haemers~\cite{haemers2021hoffman}). 
We state the version of this bound for a weighted graph, since this is the version that we will be using.

A \textsl{weighted adjacency matrix} $A_{W}(X)$ for a graph $X$ is a symmetric matrix, with zeros on the main diagonal, in which rows and columns are indexed by the vertices and the $(i, j)$-entry is $0$ if $i\not\sim j$. A weighted adjacency matrix for a graph can be thought of as a weighting on the edges of the graph, we note that is, the weight of an edge can be equal to $0$. For a $d$-regular graph, or for any weighted adjacency matrix with constant row sum equal to $d$, the all-ones vector $\mathbf{1}$ is an eigenvector with eigenvalue $d$. 

\begin{theorem}[Delsarte-Hoffman bound or ratio bound \cite{godsil2016erdos,haemers2021hoffman}]\label{ratioBound}
Let $A$ be a weighted adjacency matrix for a graph $X$ on vertex set $V(X)$. If $A$ has constant row sum $d$ and least eigenvalue $\tau$, then 
\begin{equation*}\label{RatioBound}
\alpha(X)\leq \frac{|V(X)|}{1-\frac{d}{\tau}}.
\end{equation*}
If equality holds for some coclique $S$ with characteristic vector $\nu_{S}$, then 
\begin{equation*}
\nu_{S}-\frac{|S|}{|V(X)|}\mathbf{1}
\end{equation*}
is an eigenvector with eigenvalue $\tau$.
\end{theorem}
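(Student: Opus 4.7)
The plan is to prove the ratio bound by orthogonally decomposing the characteristic vector of an independent set into its $\mathbf{1}$-component and its orthogonal complement, and then to contrast a combinatorial identity (coming from $S$ being a coclique) with a spectral lower bound (coming from the Rayleigh quotient). Since $A$ is symmetric with constant row sum $d$, the all-ones vector $\mathbf{1}$ is an eigenvector of $A$ with eigenvalue $d$, and the orthogonal complement of $\spn\{\mathbf{1}\}$ is $A$-invariant. For a coclique $S$, I would write
\[
\nu_{S} \;=\; \frac{|S|}{|V(X)|}\mathbf{1} + w, \qquad w \perp \mathbf{1},
\]
where the scalar coefficient is pinned down by $\langle \nu_{S},\mathbf{1}\rangle = |S|$.

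Next, I would evaluate $\nu_{S}^{T} A \nu_{S}$ in two different ways. On the combinatorial side, every term $\nu_{S}(i)\,A_{ij}\,\nu_{S}(j)$ is zero: either $i=j$, and $A_{ii}=0$ by hypothesis on the diagonal, or $i,j\in S$ with $i\neq j$, and then $i\not\sim j$ forces $A_{ij}=0$ by the definition of a weighted adjacency matrix. Hence $\nu_{S}^{T} A \nu_{S} = 0$. On the spectral side, the decomposition together with $A\mathbf{1}=d\mathbf{1}\perp w$ kills the cross terms and yields
\[
0 \;=\; \frac{|S|^{2}\,d}{|V(X)|} + w^{T} A w.
\]
Applying the Rayleigh-type inequality $w^{T} A w \geq \tau \|w\|^{2}$ (valid for every vector, since $\tau$ is the minimum eigenvalue of the symmetric matrix $A$) together with $\|w\|^{2} = |S| - |S|^{2}/|V(X)|$ (Pythagoras in the decomposition, using $\|\nu_{S}\|^{2} = |S|$), one derives
\[
\frac{|S|^{2}\,d}{|V(X)|} \;\leq\; -\tau\,|S|\!\left(1 - \frac{|S|}{|V(X)|}\right).
\]
Dividing by $|S|$ and gathering the $|S|/|V(X)|$ terms (noting $\tau<0$ and hence $d-\tau>0$ whenever $X$ has an edge) rearranges this into $|S|/|V(X)| \leq -\tau/(d-\tau) = 1/(1-d/\tau)$, which is the claimed bound.

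For the equality clause, I would trace equality back through the only inequality that was used, namely $w^{T} A w \geq \tau\|w\|^{2}$. Equality there forces $w$ to lie in the $\tau$-eigenspace of $A$; hence $\nu_{S} - \frac{|S|}{|V(X)|}\mathbf{1}$ is a $\tau$-eigenvector, as asserted. (The degenerate case $w=0$ corresponds to $\nu_{S}$ being a multiple of $\mathbf{1}$, i.e., $S=\emptyset$ or $S=V(X)$, the latter occurring only when $X$ has no edges.) I do not anticipate any real obstacle: the statement is pure linear algebra, and the passage from the classical $0/1$ adjacency matrix to a weighted one changes nothing in the argument, since we only used that $A$ is symmetric, has zero diagonal, has zero entries on non-edges, has constant row sum $d$, and has least eigenvalue $\tau$.
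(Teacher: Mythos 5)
Your proof is correct, and it is the standard argument for the ratio bound: decompose $\nu_S$ as $\frac{|S|}{|V(X)|}\mathbf{1}+w$ with $w\perp\mathbf{1}$, use the coclique condition and the zero diagonal to get $\nu_S^TA\nu_S=0$, compare with the Rayleigh bound $w^TAw\geq\tau\|w\|^2$, and read off the equality case from when the Rayleigh inequality is tight. The paper does not prove this theorem itself --- it quotes it from \cite{godsil2016erdos,haemers2021hoffman} --- and your argument is essentially the one given in those sources, so there is nothing to reconcile.
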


For any group $G$, the derangement graph $\Gamma_G$ is the union of graphs in the \emph{conjugacy class association scheme}; details can be found in~\cite{godsil2016algebraic}.
The vertex set for this association scheme is $G$, hence it has $|G|$ vertices, and has rank equal to the number of conjugacy classes of $G$. If $C_1,C_2,\ldots,C_k$ are the non-trivial conjugacy classes of $G$ (so $C_i \neq \{id\}$, for any $i\in \{1,2,\ldots,k\}$), then we let $A_i$ be the adjacency matrix of the Cayley digraph $\operatorname{Cay}(G,C_i)$, for any $i\in \{1,2,\ldots,k\}$. Let $C_{i_1},C_{i_2},\ldots,C_{i_\ell}$ be the conjugacy classes of derangements of $G$. We conclude that the adjacency matrix of $\Gamma_G$ is 
\begin{align*}
	A(\Gamma_{G}) = \sum_{j = 1}^\ell A_{i_j}. 
\end{align*}
Since the conjugacy class association scheme of $G$ is a commutative association scheme, the matrices $\{A_i \, | \, i =0,\dots ,d\}$ are simultaneously diagonalizable. Each of the common eigenspaces is a union of irreducible $\mathbb{C}[G]$-modules; so the eigenvalues can be found using the irreducible representations of $G$. If $\chi$ is an irreducible representation of $G$, then the eigenvalue of $A_i$ belonging to $\chi$ is $\frac{|C_i|}{\chi(\id)}\chi(x_i)$ where $x_i$ is any element in $C_i$.

If a weighted adjacency matrix of a Cayley graph on $G$ has the form $A = \sum_{i} a_i A_i$ (so the weights are constant on the conjugacy classes), 
then the eigenvalues of $A$ are 
\[
\frac{1}{\chi(\id)} \sum_{i} a_i |C_i| \chi(x_i).
\]
If the weights on all the conjugacy classes of derangements are 1 and 0 for all other conjugacy classes, then this equation is the formula in Theorem~\ref{Diaconis}.

The \textsl{permutation character} of $G$ is the character given by 
\[
\fix(g) = |  \{ i\in [n] : i^g = i \} |,
\]
for $g \in G$. This is equal to the representation induced by the trivial representation on the stabilizer of a point
\[
\Ind(1_{G_i})^{ G }(g)  = \fix(g).
\]

The representation corresponding to the permutation character is called the \emph{permutation representation}.
We call the $\mathbb{C}[G]$-module of the permutation representation the \textsl{permutation module}.
A group is $2$-transitive if and only if the permutation representation is the sum of two irreducible representations, namely the trivial representation and another irreducible representation.  Thus, for a 2-transitive group the character $\chi(g) = \fix(g)-1$ is irreducible. In this case the eigenvalue of the derangement graph afforded by $\chi$ is 
\[
\eta_{\chi}=\frac{-|\der(G) |}{n-1},
\]
(see~\cite{ahmadi2015erdHos} for details).
If $G$ is 2-transitive then it is a straightforward calculation to show that the dimension of the permutation module is $1+(n-1)^2$ and is spanned by $\{ v_{i,j} : i, j \in [n] \}$ (again, see~\cite{ahmadi2015erdHos} for details). So, in this case, the permutation module is isomorphic to the EKR-module. 
Further, every 2-transitive group has the EKR-module property~\cite{meagher2021all}. This means that for any 2-transitive group the characteristic vector of the largest intersecting set is a linear combination of the characteristic vectors of the canonical intersecting sets.

For any simply transitive group (i.e., transitive groups that are not $2$-transitive), the permutation module is still a $\mathbb{C}[G]$-module, but it is the sum of more than two irreducible $\mathbb{C}[G]$-modules. In many cases the eigenvalues for the non-trivial irreducible modules in the permutation module are not equal, and the ratio bound does not hold with equality.

The goal of this paper is to demonstrate how the algebraic approach can be applied to some simply transitive groups, namely general linear group and the special linear group.  Our plan is to weight the adjacency matrix of the graph $\Gamma_G$, for $G=\gl{q}$ and $\slg{q}$, so that the eigenvalues for all non-trivial irreducible $\mathbb{C}[G]$-modules in the permutation module are equal and the ratio bound holds with equality. Then, we can conclude the group has the EKR property, and we can further show that the characteristic vector for any maximum coclique is in the EKR-module. This approach has been used for other simply transitive groups, such as the transitive action of $\sym(n)$ on both ordered and unordered tuples in~\cite{ellis2012setwise, ellis2011intersecting}, for $n$ sufficiently large. This approach is also effectively used for the action of $\sym(n)$ on pairs~\cite{meagher20202} and 3-sets~\cite{behajaina20203} of $[n]$ for all $n\geq 5$. 

We will assign weights to the conjugacy classes of derangements in the group; the goal is to find a weighting to get the best bound from the ratio bound. We will form a linear program, in which the eigenvalues from all but the trivial representation are greater than or equal to -1 and then we maximize the eigenvalue corresponding to the trivial representation (this will be the largest eigenvalue). We consider the following setup.

\noindent{\bf Linear program.}
	Let $G\leq \sym(n)$ be a transitive permutation group with conjugacy classes of derangements $D_1,D_2,\ldots,D_k$ and trivial character $\chi_0$. Let $g_1,g_2,\ldots,g_k$ be representatives of the conjugacy classes $D_1,D_2,\ldots,D_k$, respectively. We consider the following optimization problem.
	\begin{align}
	\begin{split}
	\mathsf{Maximize } &\ \ \ \lambda_{\chi_0} = \displaystyle\sum_{i=1}^k \omega_i |D_i|,\\
	\mathsf{Subject \, to } & 	\\
	& 
	\begin{aligned}
	&\lambda_{\chi} = \displaystyle\sum_{i=1}^k \omega_i |D_i| \chi(g_i) \geq -1, \quad & & \forall \chi \in \operatorname{Irr}(G) \setminus \{\chi_0\}, \\  
	&\omega_i \in \mathbb{R}, \quad & & \forall i\in \{1,2,\ldots,k\}.
	\end{aligned}
	\end{split}\label{general-linear-program}
	\end{align}
If the solution of the linear programming (LP) problem \eqref{general-linear-program} is equal to $n-1$, then applying the ratio bound, we have $\alpha(\Gamma_{G}) \leq \frac{|G|}{1 - \frac{n-1}{-1}} = \frac{|G|}{n}$. Hence, $G$ would have the EKR property.
	
\begin{lem}\label{lem:bestwecan}
	Let $G\leq \sym(n)$ be a transitive group. If there is a weighing of the conjugacy classes of derangements of $G$ so that the non-trivial representations in the permutation character give the least eigenvalue in LP~\eqref{general-linear-program}, then the maximum  LP~\eqref{general-linear-program} can give is $n-1$. 
\end{lem}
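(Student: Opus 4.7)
My plan is to exploit the fact that the permutation character $\fix$ vanishes on every derangement of $G$. Starting from the decomposition $\fix = \chi_0 + \sum_{\chi \in P} m_\chi \chi$ into irreducible characters, where $P \subseteq \operatorname{Irr}(G) \setminus \{\chi_0\}$ collects the non-trivial constituents of $\fix$ with positive multiplicities $m_\chi$, I would record two facts. Evaluating at $\id$ gives the dimension relation $n = 1 + \sum_{\chi \in P} m_\chi \chi(\id)$; evaluating at any derangement representative $g_i$ (and using $\fix(g_i)=0$) gives the key pointwise identity
\[
\sum_{\chi \in P} m_\chi \chi(g_i) = -1.
\]

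The next step would be to feed this pointwise identity into the LP. Multiplying by $\omega_i |D_i|$, summing over the conjugacy classes of derangements $D_1, \dots, D_k$, and interchanging the two sums, the eigenvalue formula $\chi(\id)\,\lambda_\chi = \sum_i \omega_i |D_i| \chi(g_i)$ recalled earlier in the section collapses the whole expression to a single linear relation
\[
-\lambda_{\chi_0} \;=\; \sum_{\chi \in P} m_\chi \chi(\id)\, \lambda_\chi
\]
that couples the objective $\lambda_{\chi_0}$ directly to the constraint values for those characters that actually appear in the permutation character. Plugging in the feasibility bounds $\lambda_\chi \geq -1$ for every $\chi \in P$ and using positivity of each coefficient $m_\chi \chi(\id)$, we then get $\lambda_{\chi_0} \leq \sum_{\chi \in P} m_\chi \chi(\id) = n-1$, establishing the claimed upper bound for every feasible weighting.

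Finally, equality is forced precisely when every $\chi \in P$ saturates its constraint to $\lambda_\chi = -1$, and this is exactly the hypothesis of the lemma: a weighting exists under which the non-trivial constituents of $\fix$ simultaneously attain the least eigenvalue in the LP. In that case the LP value is then exactly $n-1$. I expect the main obstacle to be simply spotting the right identity to combine with the feasibility constraints; once one observes that the vanishing of $\fix$ on derangements forces $\sum_{\chi \in P} m_\chi \chi(g) = -1$ on every derangement $g$, the remainder of the proof is a one-line substitution, and the hypothesis translates transparently into the tightness condition for the inequality.
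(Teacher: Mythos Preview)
Your proposal is correct and follows essentially the same approach as the paper's proof. Both arguments hinge on the identity $\sum_{\chi \in P} m_\chi \chi(g_i) = -1$ for each derangement $g_i$ (coming from $\fix(g_i)=0$), multiply through by $\omega_i|D_i|$, sum over $i$, and then apply the feasibility constraints $\lambda_\chi \geq -1$ together with $\sum_{\chi \in P} m_\chi \chi(\id)=n-1$; you package these steps into a single clean identity $-\lambda_{\chi_0}=\sum_{\chi\in P} m_\chi\chi(\id)\lambda_\chi$ before bounding, whereas the paper bounds each term first and then sums, but the underlying computation is identical.
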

\begin{proof}Assume that 
\[
\fix(g) = \sum_{j=0}^\ell m_j \chi_j(g)
\]
where $\chi_0$ is the trivial representation and $\chi_1,\chi_2,\ldots,\chi_\ell$ are the other constituents of the permutation character of $G$. Note $n=\fix(id) \geq \sum_{j=0}^\ell m_j \chi_j(id)$, where $m_j$ is the multiplicity of $\chi_j$.

Assume that the conjugacy classes $D_i$, for $i =1,\dots, k$, are the conjugacy classes of derangements of $G$. Let $g_1,g_2,\ldots, g_k$ be representatives of $D_1,D_2,\ldots, D_k$, respectively.
Let $(\omega_i)_{i=1,\ldots, k}$ be a weighting on the conjugacy classes such that 
\[
-1 \leq \lambda_{\chi_j} = \frac{1}{\chi(id)} \sum_{i =1}^{k} \omega_i |D_i | \chi_j(g_i), 
\]
for every irreducible character $\chi_j$ of $G$. In particular, for any $j \in \{1,2,\ldots,\ell\}$, we write 
\[
-( \chi_j(id) ) \leq \sum_{i =1}^{k} \omega_i |D_i | \chi_j(g_i)  .
\]
Summing over all the $\chi_j \neq \chi_0$ (with their multiplicities) in the decomposition of $\fix(g)$, we get
\[
-(n-1) = -\sum_{j=1}^\ell m_j (\chi_j(id)) 
\leq  \sum_{j=1}^\ell \sum_{i =1}^{k} \omega_i |D_i | m_j \chi_j(g_i)  
=  \sum_{i =1}^{k} \omega_i |D_i | (-1).
\]
Therefore $ \sum_{i =1}^{k} \omega_i |D_i | \leq n-1$. Consequently, the maximum value of $\lambda_{\chi_0}$ for any such weighting is $n-1$. 
\end{proof}

\section{The General Linear group $\gl{q}$}\label{sect:GL}

In this section $q$ is assumed to be a prime power. The general linear group $\gl{q}$ acts naturally on the non-zero vectors of $\mathbb{F}_q^2$ by left multiplication. The size of the maximum intersecting sets in $\gl{q}$ have been determined in~\cite{MR3188504}, and there are two intersecting families of maximum size. The first family is the collection of canonical intersecting sets (i.e., cosets of point-stablizers in $\gl{q}$). The other
family is the collection of all subgroups that are the stabilizer of a line and their cosets; these are the subgroups
\[
H_\ell = \{ M \in \gl{q} \, | \, \forall v \in \mathbb{F}_q^2, \, Mv-v \in
\ell \}
\]
for some line $\ell$ of $\mathbb{F}_q^2$, and their cosets. These two families are believed to be the only maximum intersecting sets~\cite{ahanjidehCommunication}.

We will be using the notation from Adams~\cite{adams2002character}. The action of $\gl{q}$ on the $q^2-1$ 
non-zero vectors in $\mathbb{F}_q^2$ is not 2-transitive, so this permutation group is simply transitive. 
This action has $q$ orbitals, which are described below. 

\begin{enumerate}
\item The diagonal $\left\{(v,v)\mid v\in \mathbb{F}_q^2 \setminus \{0\}\right\}$ is clearly an orbital of size $q^2-1$.
\item There are $q-2$ orbitals each of size  $(q^2-1)$. The representatives of these
orbitals are $(v,cv)$ where $v\in \mathbb{F}_q^2 \setminus \{0\}$ and $c\in\mathbb{F}_q \backslash\{0, 1\}$. 
\item There is one final orbital of size $(q^2-1)(q^2-q)$, a representative of this
orbital is $(u,v)$ where $u$ and $v$ are not co-linear elements of $\mathbb{F}_q^2 \setminus \{0\}$. 
\end{enumerate}

\subsection{Conjugacy classes of $\gl{q}$}
\label{subsec:conjclasses}

Still following Adams'~\cite{adams2002character} notation, the conjugacy classes of $\gl{q}$ can be divided into four categories
denoted by $c_1(x), c_2(x), c_3(x,y)$ and $c_4(z)$. The structure of the matrices in these categories can be used to count the number of derangements in $\gl{q}$.

The first category is the matrices that are similar to the matrix of the form
\[
c_1(x) = \begin{bmatrix} x &0 \\0 &  x\end{bmatrix},
\]
for some $x \in \mathbb{F}_q^*$. These matrices have one eigenvalue and are diagonalizable over $\mathbb{F}_q$.
Each conjugacy class in
this category has size 1. The conjugacy class that contains the identity is $c_1(1)$. If $x \neq 1$, then $c_1(x)$ is a conjugacy
class of derangements. Thus, there are $q-2$ conjugacy classes of
derangements in this category, each of size 1.

The next category is the set of matrices similar to a matrix of the form
\[
c_2(x) = \begin{bmatrix} x & 1 \\0 &  x\end{bmatrix},
\]
for $x \in \mathbb{F}^*_q$. These matrices have only one eigenvalue and are not diagonalizable. Each conjugacy class in
this category has size $q^2-1$. If $x \neq 1$, then $c_2(x)$ is a conjugacy
class of derangements. Thus, there are $q-2$ conjugacy classes of
derangements in this category, each of size $q^2-1$.

The third category is the set of matrices similar to a matrix of the form
\[
c_3(x,y) = c_3(y,x) = \begin{bmatrix} x &  0 \\0 &  y\end{bmatrix}
\]
for $x, y \in \mathbb{F}^*_q$ and $x\neq y$. These matrices have two 
distinct eigenvalues in $\mathbb{F}_q$. Each conjugacy class in
this category has size $q(q+1)$. If $x, y \neq 1$, then $c_3(x,y)$ is a conjugacy
class of derangements. Thus, there are $\binom{q-2}{2}$ conjugacy classes of
derangements in this category, each of size $q(q+1)$.

The last category corresponds to matrices that do not have eigenvalues in $\fld_q$.
Use $\flde_q$ to represent the unique quadratic extension of $\fld_q$, so these matrices have
eigenvalues in $\flde_q$. If $A\in \gl{q}$ is such matrix, 
then its characteristic polynomial $f(t) = t^2 +at +b$ is irreducible over $\fld_q$. Hence, 
for $q$ odd,  $\Delta = a^2-4b$ is not a square in $\fld_q$. If $\delta^2 = \Delta$, for some 
$\delta \in \flde_q \setminus \mathbb{F}_q$, then we can identify $\flde_q$ 
to be $\mathbb{F}_q(\delta)$. The map 
$z = x+\delta y \mapsto \begin{bmatrix}
	x & \delta y\\
	y & x
\end{bmatrix}$, 
for $x,y\in \mathbb{F}_q$ with $(x,y)\neq (0,0)$, is an embedding of $\flde_q^*$ into $\gl{q}$
(this matrix is called the companion matrix of the element in $\flde_q^*$).
Thus, the final category of conjugacy classes are the matrices similar to a matrix of the form 
\[
c_4(z) = \begin{bmatrix} x &  \delta y \\ y &  x \end{bmatrix},
\]
with $\delta \in \flde_q \setminus \mathbb{F}_q$ where $z=x+\delta y \in \flde_q \setminus \mathbb{F}_q$. 
Each conjugacy class in this category has size $q(q-1)$. All of these
conjugacy classes are classes of derangements. Thus, there are
$\binom{q}{2}$ conjugacy classes of derangements in this category,
each of size $q(q-1)$.

The total number of derangements in $\gl{n}$ is
\[
(q-2)(1) + (q-2)(q^2-1) + \binom{q-2}{2}(q(q+1)) + \binom{q}{2}(
q(q-1)) = q(q^3-2q^2-q+3).
\]

\subsection{Irreducible Representations of $\gl{q}$}
\label{subsec:IrrReps}

Again, we use the notation for the irreducible representations of $\gl{q}$ given in~\cite{adams2002character} where more details can be found. The irreducible representations for $\gl{q}$ are, like the conjugacy classes, split into four categories.

The first category is the set of degree-$1$ representations, denoted by $\rho'(\alpha)$, where $\alpha$ is an irreducible representation 
of $\mathbb{F}^*_q$. If $\alpha$ is the trivial representation of $\mathbb{F}^*_q$, then $\rho'(\alpha)$ is the trivial representation of $\gl{q}$. 
The next category are the degree-$q$ representations denoted by
$\overline{\rho}(\alpha)$, again $\alpha$ is an irreducible representation of $\mathbb{F}^*_q$. The third category are the 
degree-$(q-1)$ representations denoted by $\pi(\chi)$.  Here $\chi$ is an irreducible representation of $\flde_q$ with 
$\chi \neq \overline{\chi}$. The final category are the degree-$(q+1)$ representations
denoted by $\rho(\mu)$, where $\mu$ is an irreducible representation of $\fld_q^* \times \fld_q^*$.
This character is expressed in terms of the norm map, $N : \flde_q^* \rightarrow \fld_q$ with $N(z) = z^{q+1}$.

The values of these characters on the four categories of conjugacy classes is given in Table~\ref{table:chartable}.

\renewcommand{\arraystretch}{1.5} 
\begin{table}
\begin{tabular}{|cc || c | c | c | c |}  \hline 
 &  Number & $q-1$ & $q-1$ & $\binom{q-1}{2}$ & $\binom{q}{2}$ \\  \hline 
 &  Size &  1 & $q^2-1$ & $q(q+1)$ & $q(q-1)$ \\  \hline 
 &      & $c_1(x)$ & $c_2(x)$ & $c_3(x,y)$ & $c_4(z)$ \\  \hline 
Rep : Dim & number & & & & \\ \hline \hline 
$\rho'(\alpha) : 1$ &  &  &    &   &   \\    \hline 
$\alpha = 1 $   & $1$ & $1$ &  $1$  &  $1$ &  $1$ \\    \hline 
 $\alpha \neq 1$   & $q-2$ & $\alpha(x^2)$ &  $\alpha(x^2)$ &  $\alpha(xy)$ &  $\alpha(Nz)$ \\     \hline \hline

$\overline{\rho}(\alpha):q$ &  & &  &  &  \\  \hline       
$\alpha =1 $ & $1$ & $q $ & 0 & $1$ & $-1$ \\  \hline 
$\alpha \neq 1$ & $q-2$ & $q \alpha(x^2)$ & 0 & $\alpha(xy)$ & $-\alpha(Nz)$  \\ \hline  \hline

$\pi(\chi)$: $q-1$ & &  &  &  &  \\  \hline 
$\chi$ & $\binom{q}{2}$ & $(q-1)\chi(x)$ & $-\chi(x)$ & 0 & $-\chi(z) - \chi(\overline{z})$ \\  \hline \hline

$\rho(\mu):q+1 $ &   &  &  & &  \\  \hline
$\mu=(1,\beta)$ & $q-2$ & $(q+1) \beta(x)$ &  $\beta(x)$ & $\mu(g) + \mu^w(g)$& 0 \\  \hline
$\mu=(\alpha,\beta), \, \alpha \neq 1$ & $\binom{q-2}{2}$ & $(q+1)\alpha(x) \beta(x)$ & $\alpha(x) \beta(x)$ & $\mu(g) + \mu^w(g)$& 0\\ \hline  
\end{tabular}
\caption{Character Table for $\gl{q}$ \label{table:chartable}}
\end{table}

From Table~\ref{table:chartable} we can find the permutation character. Define a character $\chi$ by
\begin{equation}\label{eq:permchar} 
\chi := \one + \overline{\rho}(1) + \sum_{\beta} \rho(1,\beta). 
\end{equation}
This character is the sum of the trivial
representation, one representation of dimension $q$, and $q-2$
of the representations with dimension $q+1$.  These degree-1 and degree-$q$ representations both have $\alpha=1$. The $q-2$ degree-$(q+1)$ representations are the representations with $\alpha = 1$ and $\beta \neq 1$. 
The next result proves that the character $\chi$ is the permutation character of $\gl{q}$.

\begin{lem}
The character $\chi$ is the permutation character for the action of $\gl{q}$ on the non-zero vectors of $\mathbb{F}_q^2$.
\label{lem:perm-char}
\end{lem}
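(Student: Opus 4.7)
The plan is to verify that $\chi(g) = \fix(g)$ on every conjugacy class of $\gl{q}$, using the character table above. Since $\fix(g)$ counts the non-zero vectors of $\mathbb{F}_q^2$ fixed by $g$, it is equal to $|\ker(g - I)| - 1$, and so it depends only on whether $1$ is an eigenvalue of $g$ and on the dimension of the corresponding eigenspace. Working through the four categories of conjugacy classes from Section~\ref{subsec:conjclasses}, I would record the values
\[
\fix(c_1(x)) = \begin{cases} q^2 - 1 & x = 1 \\ 0 & x \neq 1 \end{cases}, \qquad
\fix(c_2(x)) = \begin{cases} q - 1 & x = 1 \\ 0 & x \neq 1 \end{cases},
\]
\[
\fix(c_3(x,y)) = \begin{cases} q - 1 & \text{exactly one of } x,y \text{ equals } 1 \\ 0 & \text{otherwise} \end{cases}, \qquad
\fix(c_4(z)) = 0,
\]
where in the third case one uses that $c_3(x,y)$ is diagonalizable with distinct eigenvalues in $\mathbb{F}_q$, so the $1$-eigenspace is either $0$ or has dimension exactly $1$.

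Next I would evaluate $\chi(g) = 1 + \overline{\rho}(1)(g) + \sum_{\beta \neq 1} \rho(1,\beta)(g)$ on each class, reading the individual character values from Table~\ref{table:chartable} (with $\alpha = 1$ for the summands from $\overline{\rho}(1)$ and $\rho(1,\beta)$). The principal tool in the verification is the standard orthogonality identity for characters of $\mathbb{F}_q^*$,
\[
\sum_{\beta \in \widehat{\mathbb{F}_q^*}} \beta(x) = \begin{cases} q-1 & x = 1 \\ 0 & x \neq 1, \end{cases}
\]
which, separating off $\beta = 1$, gives $\sum_{\beta \neq 1} \beta(x) = -1$ whenever $x \neq 1$ and $q - 2$ when $x = 1$.

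Applying this identity case by case is then routine: on $c_1(x)$ with $x \neq 1$ one gets $1 + q + (q+1)(-1) = 0$; on $c_1(1)$ one gets $1 + q + (q+1)(q-2) = q^2 - 1$; on $c_2(x)$ the $\overline{\rho}(1)$-value is $0$, leaving $1 + \sum_{\beta \neq 1}\beta(x)$, which is $q-1$ at $x=1$ and $0$ otherwise; on $c_3(x,y)$ with neither $x$ nor $y$ equal to $1$, the sum telescopes to $1 + 1 + (-1) + (-1) = 0$, while the single coordinate taking the value $1$ makes exactly one of the two inner sums contribute $q - 2$ instead of $-1$, giving $q-1$; and on $c_4(z)$ the last sum vanishes identically, leaving $1 - 1 = 0$. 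In every case the value agrees with $\fix(g)$ computed above, so $\chi = \fix$.

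I expect the only mildly delicate step is the case $c_3(1,y)$ (and symmetrically $c_3(x,1)$), where some care is needed to separate the $\beta = 1$ contribution from the two summands $\beta(x) + \beta(y)$ that appear in $\rho(1,\beta)(c_3(x,y))$; otherwise the computation is purely mechanical manipulation of character sums over $\mathbb{F}_q^*$.
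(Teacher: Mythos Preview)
Your proposal is correct and follows essentially the same approach as the paper's proof: both verify $\chi=\fix$ class by class using the character table and the orthogonality relation $\sum_{\beta\neq 1}\beta(x)=-1$ for $x\neq 1$ (and $q-2$ for $x=1$). The only cosmetic difference is that you tabulate all the values of $\fix$ in advance, whereas the paper interleaves the computation of $\chi$ and $\fix$ within each case.
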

\begin{proof}
Recall that the permutation character of the action of $\gl{q}$ on $\mathbb{F}_q^2 \setminus \{0\}$ is given by $\fix(A) = |\left\{ x\in \mathbb{F}_q^2\setminus \{0\} \mid Ax = x \right\}|$, for any $A\in \gl{q}$. We will prove that $\chi$ is equal the permutation character on each conjugacy class of $\gl{q}$.

The value of $\chi$ on the conjugacy classes of type $c_1(x)$ is 
\begin{align}
	1 + q (1) + \sum_{\beta \in \widehat{\fld_q} \backslash \{1\} }  (q+1) (1) \beta(x)\label{eq:permutation-character}
\end{align}
If $x=1$, then this equals $1 + q + (q-2)(q+1) = q^2-1$. For $x=1$, $c_1(x)$ is the conjugacy class containing the identity, so $\chi$ gives the number of fixed points for this conjugacy class. If $x \neq 1$, then $\chi$ is equal to $ 1 + q + (-1)(q+1) = 0$. For $x\neq 1$, the class $c_1(x)$ is a conjugacy class of derangements, so this number is correct.

The value of $\chi$ on $c_2(x)$ is 
\[
1 + 0 + \sum_{\beta  \in  Irr(\fld_q^*) \backslash \{ \mathbf{1} \}}  \beta(x). 
\]
If $x = 1$, this equals $1+ 0 + (q-2) = q-1$; for $x\neq 1$, this is equal to $1 + 0  -1 = 0$.
In both cases, this is the number of fixed points of the elements in the conjugacy classes.

Since $\alpha=1$ in all the irreducible representations in $\chi$, 
the value of $\chi$ on an element in any conjugacy class in category $c_3(x,y)$ can be calculated to be
\[
1 + \one(xy)  + \sum_{\beta  \in  Irr(\fld_q^*) \backslash \{ \mathbf{1} \}} \left(  \beta(y) + \beta(x) \right).
\]
If $x=1$, then $\sum_{\beta  \in  Irr(\fld_q^*) \backslash \{ \mathbf{1} \}} \beta(x) =q-2$; if $x\neq 1$, the sum is $-1$.
So if either $x=1$ or $y=1$, the value of $\chi$ on an element in $c_3(x,y)$ is equal to $1 + 1 + (q-2) + -1 = q-1$. 
Similarly, if $x$ and $y$ are both not equal to 1, then this sum is $1 + 1+ -1 + -1= 0$. In either case, the value of $\chi$ 
on an element of $c_3(x,y)$ is equal to the number of fixed points of the element.

Finally for the conjugacy class $c_4(z)$ the value of this character is 
\[
1 + \left(-\alpha(Nz)\right) + 0 = 1 + (-1) + 0 = 0
\]
(again, with $\alpha=1$) for all $z$, which is correct since this is a conjugacy class of derangements.
\end{proof}

With these values, it is easy to see that the permutation representation is multiplicity-free and to calculate the dimension of the permutation module of $\gl{q}$.

\begin{cor}\label{cor:DimPermModule}
The permutation representation is multiplicity-free and the permutation module has dimension $q^3+q^2-3q-1$.  
\end{cor}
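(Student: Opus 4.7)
The plan is to deduce both assertions directly from Lemma~\ref{lem:perm-char}, which supplies the explicit decomposition
\[
\fix = \one + \overline{\rho}(1) + \sum_{\beta \neq 1} \rho(1,\beta).
\]
To verify multiplicity-freeness I would simply inspect Table~\ref{table:chartable}: the $1 + 1 + (q-2) = q$ constituents listed are pairwise non-isomorphic, because $\one$, $\overline{\rho}(1)$, and $\rho(1,\beta)$ have distinct dimensions $1$, $q$, and $q+1$, and the $q-2$ representations $\rho(1,\beta)$ are parametrized by distinct non-trivial characters $\beta$ of $\mathbb{F}_q^*$ and are therefore pairwise inequivalent. Hence every irreducible constituent of $\fix$ occurs with multiplicity exactly one.

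For the dimension I would appeal to the same principle already invoked for $2$-transitive groups in obtaining $1 + (n-1)^2$: the Peter--Weyl decomposition of $\mathbb{C}[G]$ as a $(G \times G)$-bimodule exhibits the span of the characteristic vectors $\{v_{i,j}\}$ as the direct sum of the isotypic components $V_{\chi_j} \otimes V_{\chi_j}^*$ ranging over the irreducible constituents $\chi_j$ of $\fix$. In the multiplicity-free case this yields
\[
\dim(\text{permutation module}) \;=\; \sum_j \chi_j(\id)^2 \;=\; 1 + q^2 + (q-2)(q+1)^2.
\]
Routine expansion $(q-2)(q+1)^2 = q^3 - 3q - 2$ then gives the claimed value $q^3 + q^2 - 3q - 1$. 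I do not anticipate any real obstacle: all the substantive work has been carried out in Lemma~\ref{lem:perm-char}, and what remains is a one-line character-table inspection to confirm that the constituents are distinct, together with the dimension formula $\sum_j \chi_j(\id)^2$ already used implicitly in the $2$-transitive discussion earlier.
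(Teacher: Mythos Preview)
Your proposal is correct and follows essentially the same approach as the paper: both deduce the corollary directly from Lemma~\ref{lem:perm-char}, observing that the constituents are pairwise inequivalent and then computing the module dimension as $\sum_j \chi_j(\id)^2 = 1 + q^2 + (q-2)(q+1)^2 = q^3 + q^2 - 3q - 1$. You are simply a bit more explicit than the paper in justifying why the $q$ constituents are distinct and why the sum-of-squares formula applies.
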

\begin{proof}
The dimension of the permutation module is the sum of the squares of the dimensions of the irreducible representations in the decomposition. By Lemma~\ref{lem:perm-char}, the dimension of the permutation module is
\[
1 + q^2 + (q-2)(q+1)^2  = q^3+q^2-3q-1. 
\]
\end{proof}

\subsection{Cliques in $\Gamma_{\gl{q}}$}
\label{subsec:cliques}

We can easily show that $\gl{q}$ has the EKR property using the well-known clique-coclique bound. 
We will use a stronger version of this result from~\cite[Section 3.7]{godsil2016erdos} to prove that the 
group also has the EKR-module property.

\begin{theorem}
\label{thm:ccl}
	Let $\{A_0,A_1,\dots, A_d\}$ be an association scheme on $v$ vertices and let $X$ be the
	union of some graphs in the association scheme.   Let $E_j$, where $j =0,1,\dots, d$, be the 
	idempotents of the association scheme, with $E_0 = \frac{1}{v} J$.
 	
	If $C$ is a clique and $S$
	is a coclique in $X$, then
	\[
		|C|\,|S| \le v.
	\]
	If equality holds, and $x$ and $y$ are the characteristic 
	vectors of $C$ and $S$ respectively, then
	\[
		x^TE_jx \, y^TE_jy =0 \qquad (i=1,\ldots,d). \qed
	\]
	\end{theorem}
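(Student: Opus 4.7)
My plan is to use the spectral decomposition of the Bose-Mesner algebra through the primitive idempotents $E_0, E_1, \dots, E_d$, together with the positive semi-definiteness of each $E_j$. Let $x$ and $y$ be the characteristic vectors of $C$ and $S$ respectively. Two elementary observations drive the argument: first, since $C$ is a clique and $S$ a coclique in the same graph $X$, a pair of distinct vertices in $C$ is $X$-adjacent while no pair of distinct vertices in $S$ is, so $|C \cap S| \leq 1$, i.e.\ $x^{T}y \leq 1$; and second, because $\sum_{j=0}^{d} E_j = I$ and $E_0 = \tfrac{1}{v}J$, one has the identity
\[
x^{T}y \;=\; x^{T} E_0 y + \sum_{j=1}^{d} x^{T} E_j y \;=\; \frac{|C|\cdot|S|}{v} + \sum_{j=1}^{d} x^{T} E_j y.
\]

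The main step is to show that the residual sum $\sum_{j\geq 1} x^{T} E_j y$ is non-negative, which combined with the first observation yields $|C|\cdot|S| \leq v$. I would establish this through the Delsarte linear-programming framework for association schemes. The clique condition gives $x^{T} A_i x = 0$ for each $i \neq 0$ with $A_i$ not contained in $X$, and the coclique condition gives $y^{T} A_i y = 0$ for each $A_i$ contained in $X$. Via the eigenvalue relations $A_i = \sum_{j} p_i(j) E_j$, these translate into a system of linear constraints on the non-negative quantities $x^{T} E_j x$ and $y^{T} E_j y$. Combining these constraints with the Cauchy-Schwarz inequality
\[
(x^{T} E_j y)^2 \leq (x^{T} E_j x)(y^{T} E_j y),
\]
which is valid because each $E_j$ is a symmetric idempotent and hence positive semi-definite, LP duality produces the required non-negativity and yields the bound.

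For the equality case, if $|C|\cdot|S| = v$, then each intermediate inequality must be tight. In particular, for every $j \geq 1$ the Cauchy-Schwarz step is attained in the trivial way, i.e.\ either $E_j x = 0$ or $E_j y = 0$, which is precisely the claim $x^{T} E_j x \cdot y^{T} E_j y = 0$. The main technical obstacle is packaging the LP-duality step cleanly so that the non-negativity of the residual sum emerges; this is where the commutativity of the scheme and the specific structure of the Bose-Mesner algebra play an essential role, and it is handled in detail in the reference \cite[Section 3.7]{godsil2016erdos}.
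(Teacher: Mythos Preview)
The paper does not give its own proof of this theorem; it is quoted from \cite[Section~3.7]{godsil2016erdos} with a terminal \qed. So the comparison is against the standard proof in that reference.

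Your argument contains a genuine error. You claim that the ``main step'' is to show
\[
\sum_{j\ge 1} x^{T}E_{j}y \;\ge\; 0,
\]
and that this together with $x^{T}y\le 1$ gives $|C|\,|S|\le v$. But that inequality is false in general. Take any clique $C$ and coclique $S$ that are disjoint (this is always possible once $|C|,|S|\ge 1$ and $v>|C|+|S|$): then $x^{T}y=0$ while $x^{T}E_{0}y=|C|\,|S|/v>0$, so $\sum_{j\ge 1}x^{T}E_{j}y=-|C|\,|S|/v<0$. No LP-duality or Cauchy--Schwarz manipulation can rescue this, because the statement you are trying to prove at this step is simply not true. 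Consequently your equality analysis also collapses, since there is no valid chain of inequalities that must all be tight.

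The proof in the cited reference does not go through $x^{T}E_{j}y$ at all. Instead one projects $xx^{T}$ and $yy^{T}$ to the Bose--Mesner algebra and uses the clique and coclique hypotheses to see that, in the $A_i$-basis, these projections are supported on complementary sets of relations (apart from $A_0=I$). Because the $A_i$ are Schur-idempotents this forces
\[
\sum_{j=0}^{d}\frac{(x^{T}E_{j}x)(y^{T}E_{j}y)}{m_{j}} \;=\; \frac{|C|\,|S|}{v},
\]
where $m_{j}=\operatorname{tr}(E_{j})$. Every summand on the left is non-negative, and the $j=0$ term alone equals $|C|^{2}|S|^{2}/v^{2}$, giving $|C|\,|S|\le v$. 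Equality forces every term with $j\ge 1$ to vanish, which is exactly $(x^{T}E_{j}x)(y^{T}E_{j}y)=0$. The key quantities are the non-negative numbers $x^{T}E_{j}x$ and $y^{T}E_{j}y$, not the cross terms $x^{T}E_{j}y$; your Cauchy--Schwarz bound points in the right direction but is used at the wrong place.
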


\begin{lem}\label{lem:clique}
There is a subgroup $H$ of $GL(2,q)$ isomorphic to $C_{q^2-1}$. Further,
$H$ contains all the elements of conjugacy classes $c_1(x)$
and exactly two elements from each of the conjugacy classes of type $c_4(z)$. 
\end{lem}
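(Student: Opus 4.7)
The plan is to take $H$ to be the image of the explicit embedding $\flde_q^* \hookrightarrow \gl{q}$ that was recorded in Section~\ref{subsec:conjclasses}, namely $z = x + \delta y \mapsto M_z := \begin{bmatrix} x & \delta y \\ y & x \end{bmatrix}$. Since $\flde_q^*$ is cyclic of order $q^2-1$, the subgroup $H$ will automatically be isomorphic to $C_{q^2-1}$, provided one verifies that this assignment is an injective group homomorphism. That verification reduces to the standard fact that multiplication by $z$ on $\flde_q$, regarded as a $2$-dimensional $\fld_q$-vector space with basis $\{1,\delta\}$, is represented by $M_z$ in that basis; thus $M_z M_w = M_{zw}$ and $\det M_z = N(z) \neq 0$ for $z \neq 0$.

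Next, I would sort the elements of $H$ according to which conjugacy class of $\gl{q}$ they fall into, using the classification from Section~\ref{subsec:conjclasses}. If $z \in \fld_q^*$, i.e.\ $y = 0$, then $M_z = xI$ is the sole element of the conjugacy class $c_1(x)$. Letting $z$ range over $\fld_q^*$ accounts for all of $c_1(x)$ for every $x \in \fld_q^*$, giving $q-1$ elements of $H$.

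For the remaining $z \in \flde_q^* \setminus \fld_q^*$ (so $y \neq 0$), the characteristic polynomial of $M_z$ has roots $z$ and its Galois conjugate $\bar z = z^q$, both lying in $\flde_q \setminus \fld_q$. Hence $M_z$ belongs to a conjugacy class of type $c_4$, which is parameterised precisely by the unordered pair $\{z, \bar z\}$. Therefore the class $c_4(z)$ contains both $M_z$ and $M_{\bar z}$, and no other element of $H$: if $w \in \flde_q^* \setminus \fld_q^*$, then $M_w$ has eigenvalues $\{w, \bar w\}$, so $M_w \in c_4(z)$ forces $w \in \{z, \bar z\}$. This contributes exactly two elements per class, i.e.\ $2\binom{q}{2} = q(q-1)$ further elements of $H$. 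The totals match: $(q-1) + q(q-1) = q^2 - 1 = |H|$, so the accounting is complete.

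There is no substantial obstacle. The only step that demands actual calculation is the verification that $z \mapsto M_z$ is a homomorphism; once that is in hand, everything else is pattern-matching against the conjugacy-class description already recalled, and the counting is automatic.
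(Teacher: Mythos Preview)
Your proof is correct and takes essentially the same approach as the paper: both realise $H$ as the Singer subgroup, i.e.\ the image of the embedding $\flde_q^* \hookrightarrow \gl{q}$ already recorded in Section~\ref{subsec:conjclasses}. The paper's version is terser---it simply names the Singer subgroup, notes that its unique order-$(q-1)$ subgroup is the set of scalar matrices $\bigcup_x c_1(x)$, and asserts that the remaining elements lie in classes of type $c_4$---whereas your eigenvalue argument makes the ``exactly two per class $c_4(z)$'' count explicit.
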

\begin{proof}
A group generated by the companion matrix of a primitive element of $\flde_q$ is cyclic of order $q^2-1$---this is a Singer subgroup. This subgroup contains the unique subgroup of order $q-1$ that consists of the union of all the conjugacy classes of type $c_1(x)$. The remaining elements of this subgroup are from the conjugacy classes of type $c_4(z)$.
\end{proof}

\begin{lem}\label{lem:glEKR}
The graph $\Gamma_{\gl{q}}$ has a clique of size $q^2-1$ and the group $\gl{q}$ has the EKR-property. 
\end{lem}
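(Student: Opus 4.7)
The plan is to combine Lemma~\ref{lem:clique} with the clique-coclique inequality from Theorem~\ref{thm:ccl}. Since $\Gamma_{\gl{q}}$ is the Cayley graph $\operatorname{Cay}(\gl{q}, \der(\gl{q}))$, a subset $C \subseteq \gl{q}$ forms a clique precisely when $c_1^{-1} c_2 \in \der(\gl{q})$ for every pair of distinct $c_1, c_2 \in C$. In particular, any subgroup of $\gl{q}$ whose non-identity elements are all derangements is a clique, and its size equals its order.

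I would therefore take $H$ to be the Singer subgroup of order $q^2-1$ supplied by Lemma~\ref{lem:clique} and check that $H \setminus \{I\}$ consists entirely of derangements. By that lemma, the elements of $H$ are the $q-1$ scalar matrices (one from each class $c_1(x)$) together with $2\binom{q}{2} = q(q-1)$ elements distributed across the classes $c_4(z)$. The only scalar matrix that fixes a nonzero vector is the identity $c_1(1)$; and from the description in Section~\ref{subsec:conjclasses}, each class $c_4(z)$ consists of matrices with no eigenvalues in $\mathbb{F}_q$, hence fixes no nonzero vector of $\mathbb{F}_q^2$, so every class of type $c_4$ is a class of derangements. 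Consequently every non-identity element of $H$ is a derangement, and $H$ is a clique of size $q^2-1$ in $\Gamma_{\gl{q}}$.

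The EKR property then falls out of the clique-coclique bound (the inequality in Theorem~\ref{thm:ccl}): if $S \subseteq \gl{q}$ is any intersecting set, i.e.\ a coclique of $\Gamma_{\gl{q}}$, then
\[
|S| \;\leq\; \frac{|\gl{q}|}{|H|} \;=\; \frac{(q^2-1)(q^2-q)}{q^2-1} \;=\; q(q-1).
\]
This matches the size $|\gl{q}|/(q^2-1) = q(q-1)$ of a canonical intersecting set (a coset of a point-stabilizer of the action on $\mathbb{F}_q^2 \setminus \{0\}$), so the bound is achieved and $\gl{q}$ has the EKR property.

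The main ``obstacle'' is essentially absent in this step: the deep input is the existence and structural description of the Singer subgroup already recorded in Lemma~\ref{lem:clique}. The only verification required is the conjugacy-class bookkeeping above, confirming that every non-identity element of $H$ avoids fixing a nonzero vector, after which the EKR property is an immediate application of the clique-coclique bound.
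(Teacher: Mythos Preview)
Your proof is correct and follows essentially the same route as the paper: use the Singer subgroup $H$ from Lemma~\ref{lem:clique}, observe that its non-identity elements are derangements so that $H$ is a clique of size $q^2-1$, and then apply the clique--coclique bound of Theorem~\ref{thm:ccl} to conclude $|S|\le q(q-1)$. Your write-up is slightly more explicit than the paper's in spelling out the conjugacy-class bookkeeping for why $H\setminus\{I\}\subseteq\der(\gl{q})$, but the argument is the same.
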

\begin{proof}
From Lemma~\ref{lem:clique} the $\gl{q}$ has a subgroup $H$ of size $q^2-1$ in 
which all elements, except the identity, are derangements. 
For any distinct $x, y \in H$, it must be that $x^{-1}y \in H$ and is a derangement, so if $x \neq y$, then $x$ and $y$ are not intersecting. This implies that the elements in $H$ form a clique in $\Gamma_{\gl{q}}$. Thus by the Theorem~\ref{thm:ccl} a coclique in $\Gamma_{\gl{q}}$ can be no larger than $\frac{|\gl{q}|}{q^2-1} = q(q-1)$. Since this is the size of the stabilizer of a point, a canonical coclique is an intersecting set of maximum size.
\end{proof}

Equality in the clique-coclique bound, implies a stronger result. If $\psi$ is an irreducible representation of $\gl{q}$, then the $\psi$-\textsl{projection} of a set $S \in \gl{q}$ is the projection of the characteristic vector of $S$ to the $\psi$-module. 
This projection is given by the matrix
\[
E_\psi(g,h) = \frac{\psi(1)}{|\gl{q}|} \psi(hg^{-1}),
\]
and the projection of $S$ to the $\psi$-module is $E_{\psi} v_S$. 
In particular, if $\psi(S) = \sum_{s\in S} \psi(s)$ is not zero, 
then the projection of $S$ to the $\psi$ module is not zero.

\begin{lem}\label{lem:cliquemodules}
The projection of any maximum coclique in $\Gamma_{\gl{q}}$ to the $\mathbb{C}[\gl{q}]$-module corresponding the 
character $\rho(\mu)$, where $\mu = (\overline{\beta}, \beta)$ with $\beta \neq 1$, 
or corresponding to $\overline{\rho}(\alpha)$, with $\alpha^2=1$, but $\alpha \neq 1$, equals $0$. 
\end{lem}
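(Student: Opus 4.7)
The plan is to exploit the equality case of Theorem~\ref{thm:ccl}. The clique $H$ from Lemma~\ref{lem:clique} has $|H|=q^2-1$, and by Lemma~\ref{lem:glEKR} any maximum coclique $S$ has size $q(q-1)$, so
\[
|H|\cdot|S| = (q^2-1)\,q(q-1) = |\gl{q}|,
\]
and equality holds in the clique-coclique bound. Writing $x$ and $y$ for the characteristic vectors of $H$ and of $S$, Theorem~\ref{thm:ccl} then forces $x^T E_\psi x \cdot y^T E_\psi y = 0$ for every non-trivial irreducible character $\psi$. Using the formula $E_\psi(g,h)=\frac{\psi(1)}{|\gl{q}|}\psi(hg^{-1})$ together with the fact that $H$ is a subgroup, a short change-of-variables argument gives
\[
x^T E_\psi x = \frac{\psi(1)\,|H|}{|\gl{q}|}\sum_{k\in H}\psi(k),
\]
so the task reduces to showing $\psi(H) := \sum_{k\in H}\psi(k) \ne 0$ for each of the two named characters; this forces $y^T E_\psi y = 0$, and since $E_\psi$ is a Hermitian idempotent, $E_\psi y = 0$.

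For $\psi = \rho(\mu)$ with $\mu = (\overline{\beta},\beta)$ and $\beta \ne 1$, Table~\ref{table:chartable} gives $\psi(c_1(x)) = (q+1)\overline{\beta}(x)\beta(x) = q+1$ (since $|\beta(x)|=1$) and $\psi(c_4(z)) = 0$. By Lemma~\ref{lem:clique}, $H$ consists of the $q-1$ scalar matrices together with elements lying in $c_4$-classes, and therefore $\psi(H) = (q-1)(q+1) = q^2-1 \ne 0$.

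For $\psi = \overline{\rho}(\alpha)$ with $\alpha^2 = 1$ and $\alpha \ne 1$, Table~\ref{table:chartable} gives $\psi(c_1(x)) = q\,\alpha(x^2) = q$ and $\psi(c_4(z)) = -\alpha(Nz)$. The cleanest way to evaluate the non-scalar contribution is to identify $H$ with $\flde_q^*$ via the Singer embedding, so that
\[
\sum_{k \in H} \alpha(Nk) = \sum_{z \in \flde_q^*} (\alpha \circ N)(z).
\]
Since $N : \flde_q^* \to \fld_q^*$ is surjective and $\alpha \ne 1$, the character $\alpha\circ N$ is non-trivial on $\flde_q^*$, so by orthogonality this sum vanishes. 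Using $N(x) = x^{q+1} = x^2$ for $x \in \fld_q^*$ to strip off the scalar contribution of $\sum_{x\in \fld_q^*}\alpha(x^2) = q-1$, we get $\sum_{\text{non-scalar } k \in H} \alpha(Nk) = -(q-1)$, whence $\psi(H) = q(q-1) + (q-1) = q^2-1 \ne 0$.

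The only slightly delicate step is the second calculation, where one must correctly observe that $\alpha\circ N$ is non-trivial (because $N$ is surjective onto $\fld_q^*$ and $\alpha\ne 1$) and apply the identity $N(x)=x^2$ on the base field. Everything else is a direct application of the character table and of the equality case of the clique-coclique bound.
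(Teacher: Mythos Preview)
Your proof is correct and follows essentially the same strategy as the paper: invoke equality in the clique--coclique bound with the Singer subgroup $H$, then show $\psi(H)\ne 0$ for the two characters in question, which forces $E_\psi y=0$. The only difference is cosmetic: for $\overline{\rho}(\alpha)$ the paper evaluates the $c_4$-contribution by directly counting how many norms $Nz$ land in each value class of $\alpha$, whereas you obtain the same total $q^2-1$ more cleanly via the orthogonality relation $\sum_{z\in\flde_q^*}(\alpha\circ N)(z)=0$ and the identity $N(x)=x^2$ on $\fld_q^*$.
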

\begin{proof}
The clique-coclique bound in Theorem~\ref{thm:ccl} holds with equality, so for any irreducible representation $\psi$ of $\gl{q}$
	\[
		v_C^T E_\psi v_C  \, v_S^T E_\psi v_S =0
	\]
where $C$ is a maximum clique and $S$ is a maximum coclique. For any irreducible representation $\psi$ with $E_\psi v_C \neq 0$, it must be that $E_\psi v_S =0$. It suffices to show that $\psi(C) = \sum_{c \in C} \psi(c)$ is not zero for $\psi = \rho(\mu)$, with $\mu = (\overline{\beta}, \beta)$ and $\beta \neq 1$, and for $\psi = \overline{\rho}(\alpha)$, with $\alpha^2=1$, but $\alpha \neq 1$,

Let $\rho(\mu)$ be a character with 
$\mu= ( \overline{\beta}, \beta )$ and $\beta \neq 1$. For the group $H$ in Lemma~\ref{lem:clique}
\[
\rho(\mu) (H) = (q+1)(1)(q-1) + 2 (0) \binom{q}{2}   = q^2-1.
\] 

Similarly, let $\overline{\rho}(\alpha)$ be the character of dimension $q$ with $\alpha^2=1$. Then 
\[
\overline{\rho}(\alpha) (H) 
= q(1)(q-1) + 2  \left( (1) \frac{q-1}{2}  + (1)\frac{1}{2} \binom{q-1}{2} +  (-1)\frac{1}{2} \binom{q-1}{2} \right)= q^2-1. 
\]
Since the value of these characters over a maximum clique is non-zero, the
projection of any maximum coclique to these modules must be zero.
\end{proof}

\subsection{Eigenvalues for  $\gl{q}$}
\label{subsec:evalues}

In this section we give a second proof that $\gl{q}$ has the EKR-module property by calculating the eigenvalues 
of the different classes in the conjugacy class association scheme on $\gl{q}$. 

From Subsection~\ref{subsec:conjclasses} we know that the derangements of $\gl{q}$ belong to 
four families of conjugacy classes: $c_1 = c_1(x)$, with $x\neq 1$; $c_2 = c_2(x)$, with $x \neq 1$; $c_3 = c_3(x, y)$, with $x, y$ both not equal to one; and $c_4 = c_4(z)$. 
Define $X_i$ for $i =1, 2, 3, 4$ to be the graph with vertices indexed by elements of $\gl{q}$ and two vertices $g, h$ 
are adjacent if and only if $g h^{-1}$ belongs to a conjugacy class in the family $c_i$. Then 
$\Gamma_{\gl{q}} = \sum_{i=1}^4 X_i$. The graphs $X_i$ all belong to the conjugacy class association scheme for $\gl{q}$, 
so the eigenvalues can be found using Table~\ref{table:chartable}. These eigenvalues are given in Table~\ref{table:evalues-gl}, the rows give the different types of representations, and the columns are the categories of conjugacy classes of derangements.
For each category of conjugacy class, we record the sum of the value of the character over the different conjugacy classes of derangements in the category. From this, we can easily calculate the eigenvalue of the derangement graph of $\gl{q}$; these are given in the final column. 

\begin{table}[t]
\begin{tabular}{|cc|cccc||c|} \hline
 & Category & $c_1(x)$  & $c_2(x)$ & $c_3(x, y)$  & $c_4(z)$ &  \\ 
 & & $x\neq 1$  &  $x\neq 1$  &  $x, y \neq 1$  &  &  \\  \hline 
   & Number  & $q-2$ & $q-2$ & $\binom{q-2}{2}$ & $\binom{q}{2}$ &  \\   \hline
   & size  & $1$ & $q^2-1$ & $q(q+1)$ & $q(q-1)$ &  \\   \hline
Rep : Dim &number &   &  & & & Eigenvalue  \\ \hline \hline

$\rho'(\alpha)$ : $1$ &&&&&& \\ \hline
$\alpha=1$ & 1& $q-2$ & $q-2$ & $\binom{q-2}{2}$ & $\binom{q}{2}$ & $q^4\!-\!2q^3\!-\!q^2\!+3\!q$\\
$\alpha^2=1$ &1& $q-2$ & $q-2$ & $-\frac{q-3}{2}$ & $-\frac{q-1}{2}$ &
  $q$\\
else &$q-3$& $-1$ & $-1$ & $1$ & 0 & $q$\\ \hline \hline
                 
$\overline{\rho}(\alpha)$ : $q$ &&&&&&\\ \hline 
$\alpha=1$ &1& $q(q-2)$ & $0$ & $\binom{q-2}{2}$ & $-\binom{q}{2}$ & $-q^2+q+1$\\
$\alpha^2=1$ &1& $q(q-2)$ & $0$ & $-\frac{q-3}{2}$ & $\frac{q-1}{2}$ &
  $q$\\
else &$q-3$& $-q$ & $0$ & $1$ & 0 &$q$ \\ \hline \hline
        
$\pi(\chi)$ : $q-1$ &&&&&&\\ \hline
$\chi=1$ &$\frac{q-1}{2}$ or $\frac{q}{2}$ & $(q\!-\!1)(q\!-\!2)$ & $-(q-2)$ &
                                                                       $0$ & $q-1$ &$q$ \\
$\chi \neq 1$ &$\frac{(q-1)^2}{2}$ or $\frac{q(q-2)}{2}$ & $-(q-1)$ &
                                                                      $1$
                         & 0 & 0  & $q$ \\ \hline \hline
                         
$\rho(\mu)$ : $q+1$ &&&&&&\\ \hline 
$\alpha=\overline{\beta}$&$\frac{q-3}{2}$ or $\frac{q-2}{2}$ &
                                                               $(q\!+\!1)(q\!-\!2)$
               & $q-2$ & $-(q-3)$ & 0 & $q$ \\
$\alpha=1$ &$q-2$& $-(q+1)$ & $-1$ & $-(q-3)$ & 0 & $-q^2+2q$\\
else & $\frac{(q-3)^2}{2}$ or $\frac{(q-2)(q-4)}{2}$ & $-(q+1)$ &
                                                                       $-1$
                         & $2$ & 0 & $q$ \\ \hline 

\end{tabular}
\caption{The eigenvalues for the conjugacy classes of $\gl{q}$. \label{table:evalues-gl}}
\end{table}

The spectrum of the derangement graph is (the raised number is the multiplicity)
\[
q(q^3-2q^2-q+3)^{(1)}, \quad  q^{(q^4-2q^3-2q^2+4q+1)}, \quad  -q^2+2q^{((q+1)^2(q-2))}, \quad  -q^2+q+1 ^{(q^2)}.
\]
Note that the ratio bound gives a bound of 
\[
\frac{(q+1)q(q-1)^2}{1 - \frac{q(q^3-2q^2-q+3)}{-q^2+q+1}} = \frac{q(q^2-q-1)}{(q-1)},
\]
which does not hold with equality. We next show that there is a
weighted adjacency matrix for which the ratio bound holds with
equality. To get these weights, we set the eigenvalues arising from
non-trivial representations in the permutation representation to be equal to $-1$.

We weight the conjugacy classes of $\gl{q}$ with the weights in Table~\ref{table:glweights}, and 
the eigenvalues of the weighted adjacency matrix are given in Table~\ref{Tab:weightedEvalues}.

\begin{table}[H]
	\begin{tabular}{|c||c|c|c|c|} \hline
Type  & $c_1(x)$, $x\neq 1$   & $c_2(x)$, $x\neq 1$  & $c_3(x,y)$, $x,y \neq 1$ & $c_4(z)$ \\ \hline
Weight & $-\frac{q-1}{q(q-2)}$ &   $\frac{1}{q(q-2)}$ &  $\frac{1}{q(q-3)}$ &  $\frac{1}{q(q-1)}$ \\ \hline
\end{tabular}
\caption{A weighting for the conjugacy classes of derangements in $\gl{q}$}\label{table:glweights}
\end{table}

\begin{table}
\begin{tabular}{|c|c||c|} \hline
 Rep:Dim &number & weighted eigenvalue \\ \hline \hline
 
 $\rho'(\alpha)$: $1$ && \\ \hline
$\alpha=1$ & 1& $q^2-2$ \\
$\alpha^2=1$ (if $q$ is odd) &1&-1\\
else &$q-3$ & $\frac{q-1}{q-2} + \frac{q+1}{q-3}$ \\ 
\hline \hline

$\pi(\chi)$: $q-1$ & &\\ \hline
$\chi=1$ &$\frac{q-1}{2}$ or $\frac{q}{2}$ & $q-3$\\
$\chi \neq 1$ &$\frac{(q-1)^2}{2}$ or $\frac{q(q-2)}{2}$ & $\frac{2}{q-2}$\\ 
\hline \hline

$\overline{\rho}(\alpha)$: $q$ && \\ \hline 
$\alpha=1$ & $1$  &-1\\
$\alpha^2=1$ & $1$& -1\\
else &$q-3$&  $\frac1q \left(\frac{q-1}{q-2} + \frac{q+1}{q-3} \right)$\\ 
\hline \hline

$\rho(\mu)$: $q+1$ && \\ \hline 
$\alpha=\overline{\beta}$&$\frac{q-3}{2}$ or $\frac{q-2}{2}$  & -1\\
$\alpha=1$ & $q-2$&  -1\\
else &$\frac{(q-3)^2}{2}$ or $\frac{(q-2)(q-4)}{2}$& $\frac{2}{q-3}$\\ \hline

\end{tabular}
\caption{Eigenvalues of the weighted adjacency graph for $\gl{q}$. \label{Tab:weightedEvalues}}
\end{table}

The ratio bound on this weighted adjacency matrix gives
\[
\alpha(\Gamma_{\gl{q}}) \leq \frac{|\gl{q}|}{1 -\frac{q^2-2}{-1}} =  q(q-1).
\]
This shows again that $\gl{q}$ has the EKR property.

Recall that the characters that sum up to the permutation character are the $q-2$ representations of dimension 
$q+1$ with $\alpha=1$, the character of dimension $q$ with $\alpha =1$ and the trivial representation. 
All the non-trivial representations have eigenvalue equal to $-1$ under this weighting. There are two other representation that also give the eigenvalue $-1$; $\rho'(\alpha)$  and $\overline{\rho}(\alpha)$ both with $\alpha^2=1$. To show that $\gl{q}$ has the EKR module property, we will need to show that the projection of any maximum coclique to these two modules is equal to 0. Lemma~\ref{lem:cliquemodules} implies this result for the representation $\overline{\rho}(\alpha)$, with $\alpha^2=1$. The representation $\rho'(\alpha)$ with $\alpha^2=1$, is a constituent of the representation induced from the trivial representation on $\slg{q}$, so we need to consider the subgroup $\slg{q}$.

\subsection{The group $\slg{q}$}

The subgroup $\slg{q}$ of $\gl{q}$ also acts transitively on the non-zero vectors of $\mathbb{F}_q^2$. 
The conjugacy classes of derangements are essentially the same, but only the classes where the determinants 
of the matrices are equal to 1 are included in $\slg{q}$. Many of the irreducible characters of $\slg{q}$ are similar 
to the characters of $\gl{q}$. The character table of $\slg{q}$ is given in~\cite{adams2002character}. Using this table 
it is possible to calculate the sum of the value of all the irreducible character over the different conjugacy classes of 
derangements in the different categories.  The tables are slightly different for different values of $q$. We report the values for 
$q \equiv 1\pmod{4}$ and $q \equiv 3 \pmod{4}$ first, and then we discuss when $q$ is even.

For $q$ odd, the eigenvalues for the different conjugacy classes are recorded in
Tables~\ref{Tab:evalues5-1mod4} and~\ref{Tab:evalues-3mod4}.  Like the group $\gl{q}$, the ratio bound does not hold 
with equality for the group $\slg{q}$, so a weighted adjacency matrix must be used. The weightings are given in Table~\ref{table:slweights}.

\begin{table}[H]
	\begin{tabular}{|c||c|c|c|c|} \hline
Type  & $c_1(x)$, $x\neq 1$   & $c_2(x)$  & $c_3(x,y)$, $x,y \neq 1$ & $c_4(z)$ \\ \hline
Weight & $0$ &   $\frac{1}{q-1}$ &  $\frac{1}{q}$ &  $\frac{q^2-3}{q(q-1)^2}$ \\ \hline
\end{tabular}
\caption{A weighting for the conjugacy classes of derangements in $\slg{q}$ with $q$ odd. \label{table:slweights} }
\end{table}

The eigenvalues of the resulting weighted adjacency matrices are given in the final columns of the  Table~\ref{Tab:evalues5-1mod4} and Table~\ref{Tab:evalues-3mod4}.

\begin{table}
\begin{tabular}{|cc|cccc||c|} \hline
 &     & $c_1(x)$  & $c_2(x)$ & $c_3(x, y)$  & $c_4(z)$ & \\ \hline
   & size  & $1$ & $(q^2-1)/2$ & $q(q+1)$ & $q(q-1)$  &  \\  
& &   &  & & & Eigenvalue \\ 
  Rep : Dim &number       &   &  & & & of weighted matrix \\ \hline \hline

$\rho'(\alpha)$ : $1$ & & & & & & \\ \hline
$\alpha=1$ & $1$ & $1$ & $2$ & $\frac{q-3}{2}$ & $\frac{q-1}{2}$ & $q^2-2$ \\ \hline \hline

$\overline{\rho}(\alpha)$ : $q$ & & & & &  & \\ \hline 
$\alpha=1$ &  1 & $q$ & $0$ & $\frac{q-3}{2}$ & $-\frac{q-1}{2}$ & -1 \\ \hline \hline

$\rho(\alpha)$ : $ q+1$ & & & & &  & \\ \hline 
$\alpha(-1)=-1$ &$\frac{q-1}{4}$ & $-(q+1)$ & $-2$ & 0  & 0  & -1\\
else                   & $\frac{q-5}{4}$ & $(q+1)$  &  $2$ & $-2$  & $0 $ & -1  \\ \hline \hline

$\pi(\chi)$ : $q-1$ & & & & &  & \\ \hline
$\chi(-1) =-1$    & $\frac{q-1}{4}$ & $-(q-1)$ & $2$ &   $0$ & $0$  & $\frac{q+1}{q-1}$ \\
$\chi(-1) = 1$    & $\frac{q-1}{4}$ & $ (q-1)$ & $-2$ &   $0$ & $2$ & $2\frac{q^2-5}{(q-1)^2}$ \\ \hline \hline 

$\pi(\chi)$ : $\frac{q+1}{2}$ & & & & & & \\ 
$w_e^{\pm}$ & 2&  $\frac{q+1}{2}$ & 1 & -1 & 0 & -1  \\ \hline  \hline 

$\pi(\chi)$ : $\frac{q-1}{2}$ & & & & & & \\ 
$w_0^{\pm}$ & $2$ &  $-\frac{q-1}{2}$ & 1 & 0 & 0 & $\frac{q+1}{q-1}$ \\ \hline
\end{tabular}
\caption{The eigenvalues for the conjugacy classes of $\slg{q}$ for $q \equiv 1 \pmod 4$. \label{Tab:evalues5-1mod4}}
\end{table}

\begin{table}
\begin{tabular}{|cc|cccc||c|} \hline
   & size  & $1$ & $\frac{q^2-1}{2}$ & $q(q+1)$ & $q(q-1)$  & \\   \hline
 &     & $c_1(x)$  & $c_2(x)$ & $c_3(x, y)$  & $c_4(z)$ & \\ \hline
      &  &   &  & & & Eigenvalue \\ 
 Rep : Dim &number    &   &  & & & of weighted matrix \\ \hline \hline

$\rho'(\alpha)$ : $1$ & & & & & & \\ \hline
$\alpha=1$ &$ 1 $& $1$ & $2 $ & $\frac{q-3}{2}$ & $\frac{q-1}{2}$ & $q^2-2$\\ \hline \hline

$\overline{\rho}(\alpha)$ : $q$ & & & &  & &\\ \hline 
$\alpha=1$ & 1 & $q$ & $0$ & $\frac{q-3}{2}$ & $-\frac{q-1}{2}$ & -1 \\ \hline \hline

$\rho(\alpha)$ : $q+1$ & & & & & & \\ \hline 
$\alpha(-1)=-1$ &$\frac{q-3}{4}$ & $-(q+1)$ & $-2$ & 0  & 0  & -1\\
else &                  $\frac{q-3}{4}$ & $(q+1)$  &  $2$ & $-2$  & $0 $  & -1\\ \hline \hline

$\pi(\chi)$ : $q-1$ &&&&&& \\ \hline
        & $\frac{q+1}{4}$ & $-(q-1)$ & $2$ &   $0$ & $0$ & $\frac{q+1}{q-1}$ \\
       & $\frac{q-3}{4}$ & $ (q-1)$ & $-2$ &   $0$ & $2$ & $2\frac{q^2-5}{(q-1)^2}$ \\ \hline \hline

$\pi(\chi)$ : $\frac{q+1}{2}$ &&&&&&\\ \hline
$w_e^{\pm}$ & 2&  $- \frac{q+1}{2}$  & -1 & 0 & 0 & -1 \\ \hline \hline 

$\pi(\chi)$ : $\frac{q-1}{2}$ & & & & & & \\ \hline
$w_0^{\pm}$ & $2$ &   $\frac{q-1}{2}$ & -1 & 0 & 1 & $\frac{q^2-5}{4}$ \\ \hline 
\end{tabular}
\caption{The eigenvalues for the conjugacy classes of $\slg{q}$ for $q \equiv 3 \pmod{4}$. \label{Tab:evalues-3mod4}}
\end{table}

For $q$ even, all the conjugacy classes of derangements are in either category $c_3(x,y)$ or $c_4(z)$.
Again we use the table of the irreducible characters is given in~\cite{adams2002character}. 
Table~\ref{Tab:evalues-even} records the eigenvalues of the different conjugacy classes of derangements. 
The ratio bound does not hold for the adjacency matrix, so a weighted adjacency matrix is used; these weights are recorded in Table~\ref{table:slweightseven}. The final column of Table~\ref{Tab:evalues-even} contains the eigenvalues of the 
weighted adjacency matrix.

\begin{table}[H]
	\begin{tabular}{|c|| c|c|} \hline
Type  &    $c_3(x,y)$, $x,y \neq 1$ & $c_4(z)$ \\ \hline
Weight &   $\frac{1}{q}$ &  $\frac{q+2}{q^2}$  \\ \hline
\end{tabular}
\caption{A weighting for the conjugacy classes of derangements in $\slg{q}$ with $q$ even.}\label{table:slweightseven}
\end{table}

\begin{table}
\begin{tabular}{|cc|cc||c|} \hline
   & size  & $q(q+1)$ & $q(q-1)$  & \\   \hline
 &      & $c_3(x, y)$  & $c_4(z)$ & \\ \hline
           &  &   & & Eigenvalues \\ 
   Rep : Dim &number   &   & &of weighted matrix\\ \hline \hline

$\rho'(\alpha)$ : $1$ & & & & \\ \hline
$\alpha =1$ &$ 1 $&  $\frac{q-2}{2}$ & $\frac{q}{2}$ & $q^2-2$ \\ \hline \hline

$\pi(\chi)$ : $q-1$ &&&& \\ \hline
  $\chi$     & $\frac{q}{2}$ &   $0$ & $1$ &   $\frac{q+2}{q}$\\ \hline \hline

$\overline{\rho}(\alpha)$ : $q$ & & & & \\ \hline 
$\alpha=1$ & 1 & $\frac{q-2}{2}$ & $-\frac{q}{2}$ & -1 \\ \hline \hline

$\rho(\alpha)$ : $q+1$ & & &   &\\ \hline 
  $\alpha$             & $\frac{q-2}{2}$  & $-1$ & 0 & -1 \\ \hline  

\end{tabular}
\caption{The eigenvalues for the conjugacy classes of $\slg{q}$ for $q$ even. \label{Tab:evalues-even}}
\end{table}

The decomposition of the permutation representation of $\slg{q}$ is similar 
to the permutation representation of $\gl{q}$---we omit the proof as it is very similar to the proof for $\gl{q}$.
For $q$ odd it is the following
\begin{equation*}
\chi = \one + \overline{\rho}(1) + 2 \sum_{\alpha} \rho(\alpha) + \pi(w_e^{\pm}),
\end{equation*}
and for $q$ even it is 
\begin{equation*}
\chi = \one + \overline{\rho}(1) + 2 \sum_{\alpha} \rho(\alpha).
\end{equation*}

\begin{lem}\label{lem:sl}
For all $q$ the group $\slg{q}$ has the EKR property. Further, if $S$ is a maximum coclique in $\Gamma_{\slg{q}}$, then the characteristic vector of $S$ is in the permutation module.
\end{lem}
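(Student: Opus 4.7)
The plan is to follow the weighted ratio-bound strategy set up in Section~\ref{sect:GL} and already applied to $\gl{q}$. First I will apply Theorem~\ref{ratioBound} to the weighted adjacency matrix of $\Gamma_{\slg{q}}$ coming from Table~\ref{table:slweights} (for $q$ odd) and Table~\ref{table:slweightseven} (for $q$ even). The weighted-eigenvalue columns of Tables~\ref{Tab:evalues5-1mod4}, \ref{Tab:evalues-3mod4} and~\ref{Tab:evalues-even} exhibit largest eigenvalue $q^{2}-2$ (from the trivial representation) and least eigenvalue $-1$, so the ratio bound gives
\[
\alpha(\Gamma_{\slg{q}})\leq \frac{|\slg{q}|}{1-(q^{2}-2)/(-1)}=\frac{q(q^{2}-1)}{q^{2}-1}=q.
\]
Since a point-stabilizer is an intersecting set of this size, the EKR property will follow immediately.

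For the EKR-module property I will use the equality case of Theorem~\ref{ratioBound}: if $S$ is a maximum coclique then $\nu_{S}-\tfrac{1}{q^{2}-1}\mathbf{1}$ is a $(-1)$-eigenvector of the weighted matrix, and therefore lies inside $\mathbb{C}[\slg{q}]$ in the sum of the isotypic components of the irreducibles whose weighted eigenvalue equals $-1$. Reading the last columns of Tables~\ref{Tab:evalues5-1mod4}, \ref{Tab:evalues-3mod4} and~\ref{Tab:evalues-even}, in every case these irreducibles are exactly the non-trivial constituents of the permutation character displayed immediately above the lemma, namely $\overline{\rho}(1)$, every $\rho(\alpha)$, and (when $q$ is odd) the two characters $\pi(w_{e}^{\pm})$. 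Combined with the analogue of Corollary~\ref{cor:DimPermModule} for $\slg{q}$, which identifies the permutation module with the direct sum of the full isotypic components inside $\mathbb{C}[\slg{q}]$ of the constituents of the permutation character, this will place $\nu_{S}-\tfrac{1}{q^{2}-1}\mathbf{1}$ inside the permutation module; since $\mathbf{1}=\sum_{j}v_{i,j}$ is also there, $\nu_{S}$ itself is in the permutation module.

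The main obstacle is the three-fold case analysis: the character tables and weightings genuinely differ for $q\equiv 1\pmod{4}$, $q\equiv 3\pmod{4}$ and $q$ even, and one must verify in each that the prescribed weighting produces least eigenvalue exactly $-1$ and that the set of irreducibles achieving $-1$ coincides with the non-trivial constituents of the permutation character. The mild subtlety that $\rho(\alpha)$ appears with multiplicity $2$ in the permutation character (so the permutation representation is no longer multiplicity-free, in contrast with $\gl{q}$) has to be handled carefully when extending Corollary~\ref{cor:DimPermModule} to $\slg{q}$, but this is the only place the argument differs from the $\gl{q}$ case.
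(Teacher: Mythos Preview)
Your approach is exactly the paper's: apply the ratio bound to the weighted adjacency matrices from Tables~\ref{table:slweights} and~\ref{table:slweightseven}, read off from Tables~\ref{Tab:evalues5-1mod4}, \ref{Tab:evalues-3mod4}, \ref{Tab:evalues-even} that the largest eigenvalue is $q^{2}-2$ and the least is $-1$, conclude $\alpha(\Gamma_{\slg{q}})\le q$, and then use the equality case to place $\nu_{S}-\tfrac{1}{q^{2}-1}\mathbf{1}$ in the $(-1)$-eigenspace, which by inspection of the tables consists solely of constituents of the permutation representation. Your extra remark about the multiplicity-$2$ occurrence of $\rho(\alpha)$ and the need for an $\slg{q}$-analogue of Corollary~\ref{cor:DimPermModule} is not something the paper addresses in the proof of this lemma; the paper simply takes ``permutation module'' here to mean the sum of the relevant isotypic components in $\mathbb{C}[\slg{q}]$, and defers the finer question of whether this equals the span of the $v_{i,j}$ to the separate EKR-module lemma that follows.
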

\begin{proof}
For any value of $q$, the ratio between the largest eigenvalue and the least is $-(q^2-2)$ in the weighted adjacency matrix. So for all of these weighted adjacency matrices, the ratio bound gives
\[
\alpha(\Gamma) \leq \frac{|SL(2,q)|}{q^2-1} = q,
\]
which is exactly the order of the stabilizer of a point. Thus, the ratio bound holds with equality for $\slg{q}$ for all $q$, so we conclude that $\sl{q}$ has the EKR property. 

The ratio bound further implies if $v_S$ is the characteristic vector of $S$, then $v_S -\frac{1}{q^2-1} \one$ is a $-1$-eigenvector. 
For all values of $q$, the only representations that afford an eigenvalue of $-1$ are representations in the permutation representation. 
This implies that $v_S$ is in the permutation module.
\end{proof}

\subsection{$\gl{q}$ has the EKR-module property}

In this next section we will prove that $\gl{q}$ has the EKR-module property.

\begin{thm}\label{thm:EKRM}
Let $S$ be a maximum coclique in $\Gamma_{\gl{q}}$. Then the characteristic vector of $S$ is in the permutation module.
\end{thm}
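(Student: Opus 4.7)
The plan is to apply Hoffman's ratio bound (Theorem~\ref{ratioBound}) to the weighted adjacency matrix from Table~\ref{table:glweights}, whose largest eigenvalue is $q^2-2$ and whose least eigenvalue is $-1$ (see Table~\ref{Tab:weightedEvalues}), so the bound $\alpha(\Gamma_{\gl{q}})\le q(q-1)$ is attained by $S$. The equality clause of Theorem~\ref{ratioBound} then forces $v_S-\frac{1}{q^2-1}\mathbf{1}$ to lie in the $(-1)$-eigenspace of the weighted matrix. Since this matrix is a linear combination of the matrices in the conjugacy class association scheme on $\gl{q}$, its $(-1)$-eigenspace is a direct sum of precisely those irreducible $\mathbb{C}[\gl{q}]$-submodules whose weighted eigenvalue equals $-1$.

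Reading Table~\ref{Tab:weightedEvalues}, the representations contributing the eigenvalue $-1$ fall into two groups: the modules $\overline{\rho}(1)$ and $\rho(1,\beta)$ for $\beta\neq 1$, which together with the trivial module form the permutation module by Lemma~\ref{lem:perm-char}; and three ``extra'' families lying outside the permutation module, namely $\overline{\rho}(\alpha)$ with $\alpha^2=1$, $\alpha\neq 1$; $\rho(\mu)$ with $\mu=(\overline{\beta},\beta)$, $\beta\neq 1$; and $\rho'(\alpha)$ with $\alpha^2=1$, $\alpha\neq 1$ (the first and third families exist only for odd $q$, since otherwise $\mathbb{F}_q^*$ has no nontrivial quadratic character). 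To place $v_S$ in the permutation module it suffices to show the projection onto each of the three extra families vanishes. Two of them are delivered directly by Lemma~\ref{lem:cliquemodules}. The main obstacle will be the third, $\rho'(\alpha)$ with $\alpha^2=1$, $\alpha\neq 1$, because the clique-coclique trick of Lemma~\ref{lem:cliquemodules} fails here: on the Singer subgroup $H$ of Lemma~\ref{lem:clique} the determinants cover $\mathbb{F}_q^*$ uniformly (each value arising $q+1$ times), so $\rho'(\alpha)(H)=\sum_{h\in H}\alpha(\det h)=0$ for any nontrivial $\alpha$.

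For the remaining case I bring in the normal subgroup $\slg{q}$ and its EKR-property from Lemma~\ref{lem:sl}. Partition $\gl{q}=\bigsqcup_{c\in\mathbb{F}_q^*}g_c\slg{q}$ by the determinant, and set $S_c:=S\cap g_c\slg{q}$. Left multiplication by $g_c^{-1}$ preserves the derangement structure, so $g_c^{-1}S_c\subseteq\slg{q}$ is intersecting, and Lemma~\ref{lem:sl} yields $|S_c|\le q$. Summing over $c$ gives $|S|\le q(q-1)$, and since $|S|$ attains this bound, we must have $|S_c|=q$ for every $c\in\mathbb{F}_q^*$. Because $\rho'(\alpha)(g)=\alpha(\det g)$ is one-dimensional, the projection of $v_S$ onto the $\rho'(\alpha)$-module vanishes iff $\sum_{g\in S}\alpha(\det g)=0$, and
\[
\sum_{g\in S}\alpha(\det g)=\sum_{c\in\mathbb{F}_q^*}|S_c|\,\alpha(c)=q\sum_{c\in\mathbb{F}_q^*}\alpha(c)=0,
\]
since $\alpha$ is a nontrivial character of $\mathbb{F}_q^*$. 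This will complete the proof.
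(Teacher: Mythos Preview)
Your proposal is correct and follows essentially the same route as the paper's own proof: apply the ratio bound to the weighted matrix of Table~\ref{table:glweights}, use Lemma~\ref{lem:cliquemodules} to kill the $\rho(\overline{\beta},\beta)$ and $\overline{\rho}(\alpha)$ ($\alpha^2=1$, $\alpha\neq 1$) components, and then use the equidistribution of $S$ across the $\slg{q}$-cosets (forced by Lemma~\ref{lem:sl}) to kill the $\rho'(\alpha)$ component. Your added remark explaining why the Singer-clique argument fails for $\rho'(\alpha)$, and your explicit parametrization of the cosets by the determinant, make the exposition slightly cleaner than the paper's version.
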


\begin{proof}
The modules with eigenvalue $-1$ in the weighted adjacency matrix correspond to the representations:
\begin{enumerate}
\item $\rho(\mu)$ with $\mu =(\overline{\beta}, \beta)$, or $\mu=(1, \beta)$
\item $\overline{\rho}(\alpha)$ with $\alpha = 1$, or $\alpha^2=1$
\item $\rho'(\alpha)$ with $\alpha^2=1$.
\end{enumerate}
By the ratio bound, the characteristic vector of any maximum coclique 
lies in the span of these modules. The modules in the permutation representation 
are $ \one = \rho'(1)$, $\overline{\rho}(1)$ and all $\rho(\mu)$ 
with $\mu=(1,\beta)$. To prove this theorem is it necessary to show that the projection of a maximum coclique to any of the modules with eigenvalue $-1$, that are not in the decomposition of the permutation representation, is 0. 
 By Lemma~\ref{lem:cliquemodules}, the characteristic vector of a maximum coclique cannot be in any $\rho(\mu)$ module with $\mu = (\overline{\beta}, \beta)$ where $\beta \neq 1$, or in any the $\overline{\rho}(\alpha)$ modules with $\alpha \neq 1$.

The last case to be considered is the degree 1 representation $\rho'(\alpha)$ with $\alpha^2=1$, and $\alpha\neq 1$. 
The sum of  all the degree 1 representations of $\gl{q}$ is the representation induced from the trivial representation 
on $\slg{q}$. If $T$ is a transversal for the cosets of $\slg{q}$ in $\gl{q}$, then for $\alpha \neq 1$, $\sum_{x \in T} \rho'(\alpha)(x) = 0$.

Let $S$ be a maximum coclique in $\gl{q}$, by Lemma~\ref{lem:glEKR}, $|S| = q(q-1)$. 
Then $S \cap \slg{q}$ is a coclique of $\slg{q}$, and by Lemma~\ref{lem:sl}, 
it cannot be larger than $q$. Further, for any coset $x\slg{q}$ it must be that $x^{-1} ( S \cap x \slg{q})$ is also a 
coclique in $\slg{q}$, and so $|S \cap x \slg{q}| \leq q$. Since the sets $S \cap x \slg{q}$ partition $S$ and $|S| = q(q-1)$, 
each $|S \cap x \slg{q}|$ has size exactly $q$.

For any $\rho'(\alpha)$,
\[
\rho'(\alpha)(S)  = \sum_{x \in T} \rho'(\alpha) (S \cap x \slg{q})= q \sum_{x \in T}  \rho'(\alpha)(x)
\]
which equals 0, unless $\alpha = 1$.
\end{proof}

To prove that $\gl{q}$ has the EKR module property, we will prove that the characteristic vectors of the 
canonical intersecting sets form a spanning set for the permutation module. By the Lemma~\ref{thm:EKRM}, we have that the characteristic vector of any canonical intersecting set is in the permutation module. So we only need to show that the span of the these vectors has the same dimension at the permutation module. 

For $x, y \in \mathbb{F}_q^2$, define $v_{(x, y)}$ be the length-$|\gl{q}|$ vector indexed by the elements in $\gl{q}$. 
The $g$-entry of $v_{(x, y)}$ is $1$ if $g(x) = y$, and $0$ otherwise---these are the characteristic vectors of the canonical cocliques.  Next pick a set of pairwise non-colinear 
vectors $\{ x_i \, : \, i=1,2, \dots, q+1\}$ from $\mathbb{F}_q^2 \setminus \{0\}$. For each $x_i$ with $i=1,2,\dots, q+1$, define
\[
S_i = \{ v_{(x_i, y )}  \, | \, y \in \mathbb{F}_q^2 \setminus \{0\} \} .
\]
This means that each $S_i$ is a set of $q^2-1$ vectors, and there are $q+1$ such sets.

\begin{lem}
The set $S_1 \cup S_2 \cup \dots \cup S_{q+1}$
is a spanning set for the permutation module of $\gl{q}$.
\end{lem}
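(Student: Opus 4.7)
The plan is to analyze the span of $S_1\cup\cdots\cup S_{q+1}$ through the isotypic decomposition of $\mathbb{C}[\gl{q}]$: for each irreducible constituent $\chi$ of the permutation representation, I will show that this span contains the entire $\chi$-isotypic component of $\mathbb{C}[\gl{q}]$. Summing over all such $\chi$ yields the permutation module, which by Corollary~\ref{cor:DimPermModule} has dimension $q^3+q^2-3q-1$.

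The left $\gl{q}$-action on $\mathbb{C}[\gl{q}]$ satisfies $g\cdot v_{(x_i,y)}=v_{(x_i,g(y))}$, so $\text{span}(S_i)$ is the cyclic left $\mathbb{C}[\gl{q}]$-submodule generated by $v_{(x_i,x_i)}=\sum_{h\in G_{x_i}}h$, where $G_{x_i}$ denotes the stabilizer of $x_i$. The $G$-orbit structure on the generators matches the action of $\gl{q}$ on $\mathbb{F}_q^2\setminus\{0\}$, so $\text{span}(S_i)\cong\mathrm{Ind}_{G_{x_i}}^{\gl{q}}(\mathbf{1})$ as a left module, and by Lemma~\ref{lem:perm-char} its irreducible constituents are exactly $\mathbf{1}$, $\overline{\rho}(1)$, and $\rho(1,\beta)$ for $\beta\neq 1$.

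The next step uses the standard $G\times G$-bimodule isomorphism identifying the $\chi$-isotypic component of $\mathbb{C}[\gl{q}]$ with $V_\chi\otimes V_\chi^{*}$, where $\gl{q}$ acts on $V_\chi$ via $\chi$ on the left and on $V_\chi^{*}$ via the contragredient on the right. Any sub-left-$G$-module of $V_\chi\otimes V_\chi^{*}$ isomorphic to $V_\chi$ has the form $V_\chi\otimes\ell$ for a unique line $\ell\subseteq V_\chi^{*}$; let $\ell_i^\chi$ be the line associated to the $\chi$-component of $\text{span}(S_i)$. Applying $\chi$ to the generator gives
\[
\chi\bigl(v_{(x_i,x_i)}\bigr) \;=\; \sum_{h\in G_{x_i}}\chi(h) \;=\; |G_{x_i}|\,P_{G_{x_i}},
\]
where $P_{G_{x_i}}$ is the projection of $V_\chi$ onto $V_\chi^{G_{x_i}}$. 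Since $\chi$ has multiplicity one in the permutation representation, Frobenius reciprocity forces $\dim V_\chi^{G_{x_i}}=1$, so $P_{G_{x_i}}$ is a rank-one operator $f_i\otimes f_i^{*}$ with $f_i$ a generator of $V_\chi^{G_{x_i}}$, and $\ell_i^\chi=\langle f_i^{*}\rangle$ is precisely the $G_{x_i}$-fixed line in $V_\chi^{*}$.

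To finish, observe that $G_{cx_i}=G_{x_i}$ for all $c\in\mathbb{F}_q^{*}$, so $\ell_i^\chi$ depends only on the one-dimensional subspace $\mathbb{F}_q x_i$; thus the $q+1$ lines $\ell_1^\chi,\dots,\ell_{q+1}^\chi$ are indexed by the $q+1$ lines of $\mathbb{F}_q^2$. The relation $gG_{x_i}g^{-1}=G_{g(x_i)}$ shows that $\gl{q}$ permutes the $\ell_i^\chi$ through its action on lines, so $\sum_i\ell_i^\chi$ is a nonzero $\gl{q}$-invariant subspace of the irreducible module $V_\chi^{*}$. Irreducibility forces $\sum_i\ell_i^\chi=V_\chi^{*}$, whence the $\chi$-component of $\text{span}(S_1\cup\cdots\cup S_{q+1})$ equals the entire $\chi$-isotypic component $V_\chi\otimes V_\chi^{*}$. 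Summing over the constituents of the permutation representation gives the lemma. The main subtlety is setting up the bimodule perspective and identifying $\ell_i^\chi$ as the $G_{x_i}$-fixed line; once this is done, irreducibility of $V_\chi^{*}$ handles the spanning automatically.
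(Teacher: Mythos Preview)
Your argument is correct, but it is a genuinely different proof from the one in the paper. The paper arranges the vectors $v_{(x_i,y)}$ as the columns of a matrix $N$, computes the Gram matrix $N^TN$ explicitly as $q(q-1)I+(J_{q+1}-I_{q+1})\otimes((J_{q+1}-I_{q+1})\otimes J_{q-1})$ using the combinatorics of cosets, diagonalises this, and reads off that the rank equals $q^3+q^2-3q-1$, which matches the dimension in Corollary~\ref{cor:DimPermModule}. Your proof instead uses the Artin--Wedderburn decomposition of $\mathbb{C}[\gl{q}]$: each $\mathrm{span}(S_i)$ is the left ideal generated by $\sum_{h\in G_{x_i}}h$, its $\chi$-isotypic part is $V_\chi\otimes\ell_i^\chi$ for the unique $G_{x_i}$-fixed line $\ell_i^\chi\subset V_\chi^*$, the right $G$-action permutes the $\ell_i^\chi$ (because $v_{(x_i,y)}\cdot g=v_{(g^{-1}(x_i),y)}$ and $G_{cx_j}=G_{x_j}$), and irreducibility of $V_\chi^*$ finishes.

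The paper's approach is elementary and self-contained---just dot products and eigenvalues of an explicit matrix---but is tailored to this particular group. Your approach avoids all computation and is intrinsically general: it really proves that for \emph{any} transitive $G$ with multiplicity-free permutation character, picking one base point $x_i$ from each orbit of the point-stabiliser (equivalently, one representative of each $G$-conjugacy class of point-stabilisers is not quite enough---you need the $G_{x_i}$ to form a full $G$-orbit under conjugation, which here is guaranteed because the lines $\mathbb{F}_qx_i$ exhaust the projective points) gives vectors whose span is the full permutation module inside $\mathbb{C}[G]$. One small point worth making explicit in your write-up: the action that permutes the $\ell_i^\chi$ is the \emph{right} regular action on $\mathbb{C}[G]$ restricted to $V_\chi^*$; you gesture at this with ``contragredient on the right'' but the verification that right translation sends $\mathrm{span}(S_i)$ to some $\mathrm{span}(S_j)$ is where $G_{cx_i}=G_{x_i}$ is actually used.
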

\begin{proof}   
Each canonical coclique is a maximum coclique and by Theorem~\ref{thm:EKRM} the vectors $v_{(x,y)}$ are in permutation module. It only remains to show that the span of these vectors is the entire module. From Corollary~\ref{cor:DimPermModule}, it is sufficient to show that span of these vectors has dimension $q^3+q^2-3q-1$.  

Define a matrix $N$ with columns the characteristic vectors in the sets $S_i$, for $i \in \{1,\dots, q+1\}$. Order these vectors so that the vectors within a single set $S_i$ are consecutive, and, within $S_i$, the vectors $v_{(i,y_1)}$ and $v_{(i,y_2)}$ with $y_1$ and $y_2$ co-linear are consecutive. 

The dot product of any $v_{(i,j)}$ and $v_{(i,k)}$ is $q(q-1)$ if $j=k$, and 0 otherwise. The dot product of any two vectors $v_{(i,j)}$ and $v_{(a,b)}$ with $i \neq a$, and $j$ not co-linear with $b$ is equal to 1. Then $N^TN$ is
\[
q(q-1)I_{(q+1)(q^2-1)} + 
\big(  \left( J_{q+1} - I_{q+1} \right) \otimes \left( (J_{q+1} - I_{q+1}) \otimes J_{q-1} \right) \big)
\]
This is a square matrix with $(q+1)(q^2-1)$ rows and columns.
The spectrum is
\[
\{  q(q^2-1)^{(1)}, \, (q^2-1)^{(q^2)}, \, q(q-1)^{((q-2)(q+1)^2)}, \, 0^{(2q)} \}
\]
(the numbers in parentheses above the numbers is the multiplicity of the eigenvalue).
Thus the rank of $NN^T$, and hence $N$, is $(q+1)(q^2-1) -2q = q^3+q^2-3q-1$, as required.
\end{proof}

Since the characteristic vector of any maximum intersecting set in $\gl{q}$ is in the permutation module and 
can be expressed as a linear combination of the canonical cocliques. So we conclude that $\gl{q}$ has the EKR module property.
We will prove the same result of $\slg{q}$, using a slightly different approach.

\begin{lem}
The group $\slg{q}$ has the EKR module property.
\end{lem}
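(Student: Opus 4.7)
The plan follows the same template as the $\gl{q}$ argument just above. By Lemma~\ref{lem:sl}, every maximum coclique $S$ of $\Gamma_{\slg{q}}$ has its characteristic vector $v_S$ inside the permutation module of $\slg{q}$, that is, the sum in $\mathbb{C}[\slg{q}]$ of the isotypic components of the irreducible constituents of the permutation character displayed just before Lemma~\ref{lem:sl}. Applying Lemma~\ref{lem:sl} to the canonical cocliques (which are themselves maximum) shows that every $v_{x,y}^{SL}$ lies in the permutation module, so the EKR-module is contained in the permutation module and the remaining task is to show the reverse containment—equivalently, that the $v_{x,y}^{SL}$ span all of the permutation module.

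My first approach would be the direct rank computation modelled on the spanning-set lemma for $\gl{q}$ just proved. Fix a set $\{x_1,\ldots,x_{q+1}\}$ of pairwise non-collinear vectors in $\mathbb{F}_q^2 \setminus \{0\}$ and form the matrix $N$ whose columns are the vectors $v_{x_i,y}^{SL}$ for $i \in \{1,\ldots,q+1\}$ and $y \in \mathbb{F}_q^2 \setminus \{0\}$. The entries of $N^{T}N$ are controlled by the orbits of $\slg{q}$ on ordered pairs of non-zero vectors—pinned down by whether the $y$'s are equal or collinear and by the determinant condition relative to a chosen basis—so $N^{T}N$ factors as a sum of tensor products of identity and all-ones matrices whose spectrum can be read off directly, just as in the $\gl{q}$ proof. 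Matching $\rank(N)$ against $\dim(\text{permutation module of }\slg{q}) = \sum_i \chi_i(\id)^2$ (summed over the distinct constituents $\chi_i$), computed from the displayed character decomposition, yields the desired equality and hence the lemma.

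The ``slightly different'' flavour hinted at in the paper is to leverage the already-proved Theorem~\ref{thm:EKRM} through the natural restriction map $R \colon \mathbb{C}[\gl{q}] \to \mathbb{C}[\slg{q}]$ that sends $e_g$ to $e_g$ when $g \in \slg{q}$ and to $0$ otherwise. This map is $\slg{q}$-equivariant, and since $S_{x,y}^{GL} \cap \slg{q} = S_{x,y}^{SL}$ it satisfies $R(v_{x,y}^{GL}) = v_{x,y}^{SL}$; hence $R$ sends the EKR-module of $\gl{q}$ onto the EKR-module of $\slg{q}$. By Theorem~\ref{thm:EKRM}, the source equals the permutation module of $\gl{q}$, so it suffices to verify that $R$ carries the permutation module of $\gl{q}$ onto the permutation module of $\slg{q}$; this can be arranged by comparing $\slg{q}$-module characters of the two sides via the branching of the constituents of the $\gl{q}$ permutation character down to $\slg{q}$.

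The main obstacle is the non-multiplicity-free structure of the permutation representation of $\slg{q}$ when $q$ is odd. Each $\rho(\alpha)$ in the decomposition of the permutation character appears with multiplicity two, so the corresponding isotypic component in $\mathbb{C}[\slg{q}]$ has dimension $(q+1)^2$, whereas each cyclic submodule $\mathbb{C}[\slg{q}] \cdot v_{x,x}^{SL}$ supplies only two of the $q+1$ copies of $\rho(\alpha)$ available there. The technical heart of the proof—whether phrased as a block-spectrum computation of $N^{T}N$ or as a character comparison after restriction from $\gl{q}$—is showing that letting $x$ range over the $q+1$ pairwise non-collinear vectors picks up all the remaining copies of $\rho(\alpha)$, so that the EKR-module fills out the whole permutation module rather than a proper submodule of it.
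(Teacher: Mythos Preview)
Your overall plan---show that the canonical-coclique vectors span the whole $(-1)$-eigenspace (plus the all-ones line) that Lemma~\ref{lem:sl} forces $v_S$ into---matches the paper. But neither of your two proposed routes is the one the paper takes, and your first one has a real obstruction.

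In your first approach you claim $N^TN$ will factor into tensor products of $I$'s and $J$'s ``just as in the $\gl{q}$ proof''. It does not. For $\gl{q}$ the off-diagonal block indexed by $(x_i,x_j)$ depends only on whether $y_1,y_2$ are collinear; for $\slg{q}$ the $((x_i,y_1),(x_j,y_2))$-entry is $1$ exactly when $\det[y_1,y_2]=\det[x_i,x_j]$, so within each pair of line-blocks you get a permutation matrix of $\fld_q^*$ (depending on $i,j$) rather than an all-ones block. The clean Kronecker decomposition of the $\gl{q}$ argument collapses, and you would need a different spectral analysis. Your restriction approach is plausible but, as you acknowledge, the branching verification is the entire content.

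The paper's ``slightly different approach'' avoids both problems by reversing the order: take $N$ to have \emph{all} pairs $(i,j)$ as columns and compute $NN^T$ instead of $N^TN$. The $(g,h)$-entry of $NN^T$ is $\fix(gh^{-1})$, so
\[
NN^T=(q^2-1)I+(q-1)\sum_C A_C,
\]
the sum over the conjugacy classes $C$ of non-derangements, with $A_C$ the corresponding class matrix. This lives in the conjugacy class association scheme, and its eigenvalues come straight off the character table of $\slg{q}$---no orbital analysis needed. Counting the zero eigenvalue gives $\rank N=\frac{q(q-1)(q+3)}{2}$, which is exactly one more than the dimension of the $(-1)$-eigenspace of the weighted adjacency matrix, finishing the argument. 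The trick of passing to $NN^T$ is what makes the multiplicity-two obstacle you identified evaporate.
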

\begin{proof}
Define the matrix $N$ so that the rows correspond to the elements in $\slg{q}$ and the columns pairs of elements from $\fld_q^2$. The $(g, (i,j) )$ entry of $N$ is $1$ if $i^g = j$ and $0$ otherwise. The columns of this matrix are the characteristic vectors of the canonical cocliques, by Lemma~\ref{lem:sl} these are in the span of the $q^2-2$- and $-1$-eigenspace. So it remains to prove that the rank of this matrix is one more than the dimension of the -1-eigenspace of the weighted adjacency matrix.

Consider the matrix $NN^T$. The $(g,h)$ entry of this matrix is the number elements on which $g$ and $h$ agree. If $g=h$ the entry is $q^2-1$. 
The non-deragements in $\slg{q}$ belong to the two conjugacy classes: $c_2( 1, 1)$ and  $c_2( 1, \gamma)$.
If $gh^{-1}$ is in the conjugacy class $c_2( 1, 1)$ or $c_2(1,\gamma)$, then the $(g,h)$-entry is equal to $q-1$.
All other entries of $NN^T$ are equal to 0.

This means that $NN^T$ is equal to 
\[
(q^2-1) I_{q(q^2-1)} + (q-1) ( A_1 + A_2), 
\]
where $A_1$ and $A_2$ are the adjacency matrices in the conjugacy class association scheme corresponding to the conjugacy classes $c_2( 1, 1)$ and $c_2( 1, \gamma)$. 

The eigenvalue of $A_1+A_2$ can be calculated using the character table of $\slg{q}$ and are given in Table~\ref{tab:sl A1A2}.

\begin{table}
\begin{tabular}{|l|ccccccc|} \hline
Rep. & $\rho(\alpha)$ & $\overline{\rho}(1)$ & $\rho'(1)$ &  $\pi(\chi)$ & $\omega_e^\pm$ & $\omega_0^\pm$ & $\omega^\pm$ \\ \hline
Eigenvalue & $q-1$  &    0   &  $q^2-1$ & $-(q+1)$ & $q-1$ & $-(q+1)$ & 0\\ 
Multiplicity& $(q+1)^2\frac{q-3}{2}$ & $q^2$ & $1$ & $\frac{(q-1)^3}{2}$ & $2(\frac{q+1}{2})^2$ &  $2(\frac{q-1}{2})^2$ & $2q^2$ \\ \hline
\end{tabular}
\caption{Eigevnalues of $A_1+A_2$. \label{tab:sl A1A2}}
\end{table}

From this it can be seen that eigenvalues of $NN^T$ are
\[
\left( 
( (q^2-1)+(q-1)^2 )^{\left(  \frac{(q-3)(q+1)^2}{2} \right)}, \quad 
(q^2-1)^{(2q^2)}, \quad
q (q^2-1)^{(1)}, \quad 
0^{\left( \frac{(q-1)^3}{2} + 2(\frac{(q-1)}{2})^2 \right)}
\right).
\]
The rank of $N$ is  $(q-1)q(q+1)-\frac{q(q-1)^2}{2} = \frac{q(q-1)(q+3)}{2}$, which is one less than the dimension of the -1-eigenspace of the weighted adjacency matrix for $\slg{q}$.

For $q$ even there is only one conjugacy class of non-derangements. 
So $NN^T$ is equal to 
\[
(q^2-1) I_{q(q^2-1)} + (q-1) ( A_1), 
\]
where $A_1$ is the adjacency matrix in the conjugacy class scheme that corresponds to the single class of non-derangements.
The eigenvalues of $A_1$ are 
\[
\left( q^2-1^{(1)}, (q-1)^{ \left( \frac{(q+1)^2(q-2)}{2} \right) }, 0^{(q^2)}, -(q+1)^{\left( \frac{q(q-1)^2}{2} \right)} \right).
\]

We deduce that eigenvalues of $NN^T$, for $q$ even, are
\[
\left( 
q (q^2-1)^{(1)}, \quad 
((q^2-1)+(q-1)^2)^{\left( \frac{(q+1)^2(q-2)}{2} \right)}, \quad 
(q^2-1)^{(2q^2)}, \quad
0^{\left( \frac{q(q-1)^2}{2} \right)}
\right).
\]
The rank of $N$ is  $(q-1)q(q+1)-\frac{q(q-1)^2}{2} = \frac{q(q-1)(q+3)}{2}$, which is one less than the dimension of the -1-eigenspace of the weighted adjacency matrix for $\slg{q}$.

\end{proof}

\begin{thm}
The group $\gl{q}$ does not have the strict-EKR property.
\end{thm}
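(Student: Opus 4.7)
The strategy is to exhibit a concrete maximum intersecting subgroup of $\gl{q}$ that is not a canonical intersecting set. As foreshadowed in the introduction, the natural candidate is the line-stabilizer subgroup
\[
H_\ell = \{ M \in \gl{q} : Mv - v \in \ell \text{ for all } v \in \mathbb{F}_q^2 \}
\]
for a fixed line $\ell \subset \mathbb{F}_q^2$. The plan is to verify three things in turn: that $H_\ell$ is a subgroup of order $q(q-1)$, that $H_\ell$ is intersecting, and that $H_\ell$ does not coincide with any point-stabilizer.

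For the first step, I would choose a basis $\{e_1, e_2\}$ of $\mathbb{F}_q^2$ with $e_1 \in \ell$. The defining condition $(M-I)\mathbb{F}_q^2 \subseteq \ell$ is preserved under composition and inversion, so $H_\ell$ is a subgroup. Writing matrices in this basis, $M \in H_\ell$ if and only if $M = \begin{pmatrix} 1+c & b \\ 0 & 1\end{pmatrix}$ for some $b \in \mathbb{F}_q$ and $c \in \mathbb{F}_q \setminus \{-1\}$ (the exclusion ensures invertibility). Hence $|H_\ell| = q(q-1)$, which by Lemma~\ref{lem:glEKR} equals the size of a maximum intersecting set. For the second step, I would observe that every matrix of the displayed form has $1$ as an eigenvalue (coming from the lower-right diagonal entry), and hence fixes a nonzero vector. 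Since $H_\ell$ is a subgroup, for any $M, M' \in H_\ell$ the element $(M')^{-1}M \in H_\ell$ fixes some $v \neq 0$, so $Mv = M'v$, proving $H_\ell$ is intersecting.

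For the third step, I would note that since $I \in H_\ell$, the only way $H_\ell$ could be a canonical intersecting set is if it equals a point-stabilizer $G_v$; so it suffices to show that no nonzero $v \in \mathbb{F}_q^2$ is fixed by every element of $H_\ell$. Assuming $q \geq 3$, choose $c \in \mathbb{F}_q \setminus \{0,-1\}$; then $\operatorname{diag}(1+c,1) \in H_\ell$ fixes only multiples of $e_2$, while $\begin{pmatrix} 1 & 1 \\ 0 & 1 \end{pmatrix} \in H_\ell$ does not fix $e_2$. Hence no common nonzero fixed vector exists, and $H_\ell$ is a non-canonical maximum intersecting set. The only real obstacle is the degenerate case $q=2$: here $H_\ell$ collapses to the point-stabilizer $G_{e_1}$ and in fact $\gl{2} \cong \sym(3)$ does have the strict-EKR property, so the theorem should be read as applying to $q \geq 3$.
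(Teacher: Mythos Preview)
Your proof is correct and follows essentially the same route as the paper: both exhibit the line-stabilizer subgroup $H_\ell$ (the paper calls it $S_\ell$), compute its order as $q(q-1)$ via an explicit upper/lower-triangular matrix form, and observe that every element has $1$ as an eigenvalue so that the subgroup is intersecting. Your version is in fact more complete than the paper's, since you explicitly verify that $H_\ell$ is not a point-stabilizer (the paper leaves this implicit) and you correctly flag the degenerate case $q=2$, where $H_\ell$ collapses to a point-stabilizer and $\gl{2}\cong\sym(3)$ does satisfy strict-EKR.
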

\begin{proof}
For a line $\ell$, let $S_\ell$ be the set of all $M \in \gl{q}$
with $Mv-v \in \ell$ for all $v \in \mathbb{F}_q^2$. 
This forms a group of size $q(q-1)$, this can be seen by counting the number of
matrices in $S_\ell$. Assume without loss of generality that $\ell$ is the line containing $(0,1) \in \mathbb{F}_q^2$. Then any matrix in $S_\ell$ has the $(1,1)$-position equal to $1$ and the $(1,2)$-position equal to $0$, then there are $(q-1)$ choices for the
$(2,2)$-entry, since it cannot be $0$, and $q$ choices for the $(2,1)$-entry.
Finally, from the structure of these matrices, it can be seen that each matrix in $S_\ell$ has a fixed point. 
Since $S_\ell$ is a subgroup, for any $M_1, M_2 \in S_\ell$ the matrix $M_1M_2^{-1}$ is also in $S_\ell$ so it has a fixed point. This shows that $S_\ell$ 
is an intersecting set.
\end{proof}

\section{$\agl{q}$ on lines}

In this section we will examine two related imprimitive groups that do not have the EKR property and for which the method used in Section~\ref{sect:GL}
does not seem to produce good bounds. The first group is the affine general linear group, $\agl{q}$, with the action on lines, rather than points. 
This action is related to $\pgl{q}$ acting on pairs of projective points; this is the second group that we consider.

Recall that the affine plane $\ag(2,q)$, for any prime power $q$, is the incidence structure $(\mathcal{V}_q,\mathcal{L}_q,\sim)$, 
where the set of points is $\mathcal{V}_q = \mathbb{F}_q^2$, the set of lines is $\mathcal{L}_q = \left\{ L_{u,v} \mid u,v\in \mathbb{F}_q^2, v\neq 0 \right\}$ with $L_{u,v} = \left\{ u+tv \mid t\in \mathbb{F}_q \right\}$, and for any $x\in V$, $\ell \in L$, $x\sim \ell$ if and only if $x\in \ell$. 
The permutation group $\agl{q}$ consists of all affine transformations $(M,z): v\mapsto Mv+z$, for any $M \in \gl{q}$ and $z \in \mathbb{F}_q^2$. 
Hence, $\agl{q}$ acts naturally on the vector space $\mathbb{F}_q^2$, which coincides with the points of $\ag(2,q)$. 
This action is 2-transitive and so, under this action, $\agl{q}$ has both the EKR-property and the EKR-module property. 

The affine group $\agl{q}$ also acts on the set of lines of $\ag(2,q)$ as follows: for any $(M,z) \in \agl{q}, L_{u,v} \in \mathcal{L}_q$, we have 
$(M,z)(L_{u,v}) = \{  (M,z)(u+tv) \mid t\in \mathbb{F}_q \}$. We will refer to this action as the action on the lines and 
this is the action we consider in the section. This action is not 2-transitive, it is a rank 3 imprimitive action. There are
$q+1$ blocks each of size $q$; each block is a set of parallel lines. This means each block has exactly one line 
through 0 and all the other lines are shifts of this line. Since this group is imprimitive of rank  $3$, the system of imprimitivity describe above is the only one.

\subsection{Derangements in $\agl{q}$}

In this section we will find the conjugacy classes of derangements in $\agl{q}$. 

\begin{lem}
If $M$ has no eigenvalues in $\fld_q$, then $(M,z)$ is a derangement for any $z \in \mathbb{F}_q^2$.
\end{lem}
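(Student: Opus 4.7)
The plan is a short direct contradiction: assume some line is fixed by $(M,z)$ and extract an eigenvalue of $M$ in $\mathbb{F}_q$ from that assumption. Concretely, suppose $(M,z)(L_{u,v}) = L_{u,v}$ for some direction vector $v \neq 0$ and base point $u$. By definition of the action, for every $t \in \mathbb{F}_q$ the image $M(u+tv)+z = (Mu+z) + t\,Mv$ must lie in $L_{u,v} = \{u+sv : s \in \mathbb{F}_q\}$; in other words, there is a function $s : \mathbb{F}_q \to \mathbb{F}_q$ with $(Mu+z) + t\,Mv = u + s(t)\,v$ for every $t$.

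Subtracting the $t=0$ identity from the $t=1$ identity eliminates the base-point contribution $Mu+z-u$ and yields $Mv = \bigl(s(1)-s(0)\bigr)\,v$. Thus $v$ is an eigenvector of $M$ with eigenvalue $s(1)-s(0) \in \mathbb{F}_q$, contradicting the hypothesis that $M$ has no eigenvalues in $\mathbb{F}_q$. Hence no line is fixed, and $(M,z)$ is a derangement on $\mathcal{L}_q$ for every choice of translation $z$.

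There is no real obstacle here; the only thing to be careful about is the quantification, namely that we must rule out $(M,z)$ fixing \emph{any} line (not just lines through the origin), which is why I keep a general base point $u$ and use the difference of two values of $t$ to isolate the linear part $Mv$. The translation $z$ plays no role in this cancellation, which is exactly why the conclusion is uniform in $z$.
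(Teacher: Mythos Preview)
Your proof is correct and follows essentially the same approach as the paper: assume a line $\ell$ is fixed, parameterize its points, and observe that the difference of two image points lies in the direction $\langle v\rangle$, forcing $Mv\in\langle v\rangle$. The paper's version writes the fixed line as $\langle v\rangle + w$ and subtracts $(M,z)(w)$ from $(M,z)(iv+w)$ to isolate $M(iv)$, which is exactly your $t=1$ minus $t=0$ trick in slightly different notation.
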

\begin{proof}
Let $(M,z)$ be an element of $\agl{q}$ that fixes the line $\ell =\ell_0 +w$ where $\ell_0$ is the line through zero given by $\langle v \rangle$.
For any $i\in \mathbb{F}_q$, the point $iv+w$ is on $\ell$, so $(M,z) (iv+w)$ is also on $\ell$. Thus
\[
(M,z) (iv+w) = M (iv+w) + z = M(iv) + M(w) +z = M(iv) + (M,z)(w).
\]
Since $(M,z)(w) \in \ell$, the vector $M(iv)$ is the difference of two points both on the line $\ell$. This implies $M(iv)$ is on the line $\langle v \rangle$ 
and $v$ is an eigenvector for $M$.  

Thus, if an element $(M,z)$ is not a derangement, then $M$ has an eigenvector; 
the contrapositive of this statement is that if $M$ has no eigenvalues, then $(M,z)$ is a derangement.
\end{proof}

\begin{lem}
Assume $M$ is not diagonalizable and has exactly one eigenvalue with corresponding eigenvector $s$. 
Then $(M, z)$ is a derangement if and only if the only eigenvalue of $M$ is equal to $1$ and $z \not \in \langle s \rangle$.
\end{lem}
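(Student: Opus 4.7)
The plan is to characterize precisely when an affine map $(M,z)$ fixes some line of $\ag(2,q)$, and then specialize to the hypotheses on $M$.

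First I would sharpen the computation in the preceding lemma. For $(M,z)$ to fix the line $\ell = u + \langle v \rangle$, we need $(M,z)(u + tv) = Mu + z + tMv$ to lie in $u + \langle v \rangle$ for every $t \in \mathbb{F}_q$. Varying $t$ forces $Mv \in \langle v \rangle$, i.e.\ $v$ is an eigenvector of $M$ with some eigenvalue $\mu$; substituting back, the remaining condition is $(M-I)u + z \in \langle v \rangle$. Under our hypothesis $M$ has a unique eigenline $\langle s \rangle$, so every fixed line must have the form $u + \langle s \rangle$, and $(M,z)$ is a derangement if and only if there is no $u \in \mathbb{F}_q^2$ with $(M-I)u + z \in \langle s \rangle$.

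Next I would split on the unique eigenvalue $\mu$ of $M$. If $\mu \neq 1$, then $1$ is not an eigenvalue of $M$, so $M - I$ is invertible. Hence the map $u \mapsto (M-I)u + z$ is a bijection of $\mathbb{F}_q^2$ and in particular hits $\langle s \rangle$; so $(M,z)$ fixes some line and is not a derangement. This rules out $\mu \neq 1$ in the derangement case. If $\mu = 1$, then since $M$ is not diagonalizable, $M$ is conjugate to a single Jordan block $\left(\begin{smallmatrix}1 & 1 \\ 0 & 1\end{smallmatrix}\right)$ and $M - I$ has rank one; as $(M-I)s = 0$, the one-dimensional image of $M-I$ must be $\langle s \rangle$. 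Consequently $(M-I)u \in \langle s \rangle$ for every $u$, and the condition $(M-I)u + z \in \langle s \rangle$ becomes simply $z \in \langle s \rangle$. Thus $(M,z)$ fixes a line iff $z \in \langle s \rangle$, which gives the stated equivalence.

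The main subtlety is correctly identifying $\operatorname{image}(M-I) = \langle s \rangle$ in the Jordan-block case (rather than some other line), but this follows immediately from $(M-I)s = 0$ plus the rank-one observation. Once that is in place, the two cases combine directly into the stated ``if and only if.''
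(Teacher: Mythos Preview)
Your argument is correct and follows essentially the same route as the paper: both reduce to the criterion that $(M,z)$ fixes some line if and only if $(M-I)u + z \in \langle s \rangle$ for some $u$, then split on whether the eigenvalue $\mu$ equals $1$, using invertibility of $M-I$ when $\mu \neq 1$ and the identification $\operatorname{image}(M-I) = \langle s \rangle$ when $\mu = 1$. Your version is a bit more streamlined because you set up the fixed-line criterion once at the start, whereas the paper verifies an explicit fixed line in the $\mu \neq 1$ case and argues by contradiction in the $\mu = 1$ case.

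One small correction to the justification you flag as the ``main subtlety'': the deduction ``$(M-I)s = 0$ plus rank one implies $\operatorname{image}(M-I) = \langle s \rangle$'' is not valid as written. The equation $(M-I)s = 0$ tells you $s$ lies in the \emph{kernel} of $M-I$, not in its image, and knowing the kernel together with the rank does not by itself determine the image. What you need is $(M-I)^2 = 0$, which is immediate from the Jordan form you already invoked (or from Cayley--Hamilton applied to the characteristic polynomial $(t-1)^2$); this gives $\operatorname{image}(M-I) \subseteq \ker(M-I) = \langle s \rangle$, and then equality follows from both spaces being one-dimensional. With that fix the proof is complete.
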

\begin{proof}
First, if $z \in \langle s \rangle$ then $(M, z)$ fixes the line through zero given by $\langle s \rangle$. 
So clearly in this case $(M, z)$ is not a derangement.

Assume that $M$ has only one eigenvector $s$ and the corresponding eigenvalue is $\mu \neq 1$. Then the vector
\[
w = (M-I)^{-1}(s-z)
\]
is defined and $(M, z)$ is not a derangement since it fixes the line $\langle s \rangle + w$. To see this consider for any $k$,
\[
(M, z) (ks+w) 
= \mu k s + (M-I)w +w+z 
= \mu k s + s-z+w+z 
= (\mu k +1)s +w.
\]

Assume $Ms =s$, $z \not \in  \langle s \rangle$ and that $(M, z)$ fixes the line $\ell =\ell_0 +w$ where $\ell_0$ 
is the line through zero given by $\langle v \rangle$. 
Then $(M,z)(w) = kv+w$ for some $k$, and that $Mw-w+z$ is in the line $\ell_0$.
Similarly,  $(M,z)(v+w) = k'v+w$ for some $k'$, so $Mv+Mw-w+z$ is also the line $\ell_0$.
This implies that $Mv$ is on $\ell_0$, so $v$ is an eigenvector. As $M$ has only one eigenvector, $\ell_0 = \langle s \rangle$. 
Further $Mw-w = (M-I)w$, must be in $\langle s \rangle$, since the eigenvalue corresponding to $s$ is 1. But then the fact that 
$Mw-w+z$ is on the line $\ell_0$ implies that $z$ is a multiple of $s$.
\end{proof}

\begin{lem}\label{lem:twodistinct}
If $M$ has two distinct eigenvalues, then $(M, z)$ is not a derangement, for any $z\in \mathbb{F}_q^2$.
\end{lem}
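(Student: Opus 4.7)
The plan is to exhibit, for each such pair $(M,z)$, an explicit line of $\ag(2,q)$ fixed by the affine map $v\mapsto Mv+z$. The first step, mirroring the analysis in the two previous lemmas, is to characterize the fixed lines: a line $\ell = \langle v\rangle + w$ (with $v\neq 0$) is sent by $(M,z)$ to $\langle Mv\rangle + (Mw+z)$, which equals $\ell$ if and only if $Mv\in\langle v\rangle$ and $(M-I)w+z\in\langle v\rangle$. So I must exhibit an eigenvector $v$ of $M$ together with a shift $w$ satisfying the second containment.

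Since $M$ has two distinct eigenvalues $\mu_1,\mu_2\in\mathbb{F}_q^*$, it is diagonalizable over $\mathbb{F}_q$, and I fix a basis $v_1,v_2$ of $\mathbb{F}_q^2$ with $Mv_i=\mu_i v_i$. I then split into two cases. If neither eigenvalue equals $1$, then $M-I$ is invertible, so $w_0 := -(M-I)^{-1}z$ is an honest fixed point of $(M,z)$ in $\mathbb{F}_q^2$, and for either $i\in\{1,2\}$ the line $\langle v_i\rangle + w_0$ is preserved. Otherwise exactly one eigenvalue equals $1$ (not both, since they are distinct); say $\mu_1=1$ and $\mu_2\neq 1$. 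In this subcase I use the direction $v=v_1$: writing $z=\alpha_1 v_1+\alpha_2 v_2$ in the eigenbasis and taking $w := -\alpha_2(\mu_2-1)^{-1}v_2$, a direct computation gives $(M-I)w+z = \alpha_1 v_1 \in \langle v_1\rangle$, so $(M,z)$ fixes the line $\langle v_1\rangle+w$. The symmetric case $\mu_2=1,\,\mu_1\neq 1$ is handled identically with the roles of the two eigenvectors swapped.

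There is no substantial obstacle here; the argument is essentially a two-line linear algebra calculation in the eigenbasis of $M$. The only thing to watch is that when an eigenvalue equals $1$ one is forced to use that eigenvalue's eigendirection as the direction of the fixed line, because the component of $(M-I)w$ along the other eigendirection is the only one that can be adjusted to cancel the corresponding component of $z$. This is precisely what the case analysis records.
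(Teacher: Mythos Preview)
Your proof is correct and follows essentially the same approach as the paper: both work in an eigenbasis $v_1,v_2$ of $M$ and exhibit a fixed line with direction $v_1$ by choosing the shift $w=-\alpha_2(\mu_2-1)^{-1}v_2$. The only difference is cosmetic: the paper observes at the outset that distinctness of the eigenvalues forces at least one of them (say $\mu_2$) to differ from $1$, and then the single formula above covers both of your cases simultaneously, so the separate treatment of ``neither eigenvalue equals $1$'' is unnecessary.
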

\begin{proof}
Assume that $v_1$ and $v_2$ are eigenvectors of $M$ with corresponding, distinct, eigenvalues 
$\mu_1$ and $\mu_2$. Since the eigenvalues are distinct, we can assume that $\mu_2 \neq 1$.

Set $\ell_0 = \langle v_1 \rangle$ and express $z = a_1 v_1 + a_2 v_2$. We claim that $(M, z)$ fixes the line 
\[
\ell_0 + \frac{-1}{\mu_2-1}z = \ell_0 + \frac{-a_2}{\mu_2-1}v_2.
\] 
To see this, consider:
\begin{align*}
(M, z)(\ell_0 + \frac{-a_2}{\mu_2-1}v_2) 
&= M(\ell_0 + \frac{-a_2}{\mu_2-1}v_2) + ( a_1 v_1 + a_2 v_2) \\
&= \ell_0 + \frac{-a_2}{\mu_2-1} \mu_2 v_2 + ( a_1 v_1 + a_2 v_2)  \\
&= \ell_0 + \frac{-a_2}{\mu_2-1} \mu_2 v_2 +  a_2 v_2  \\
&=\ell_0 + \frac{-a_2}{\mu_2-1} v_2 . 
\end{align*}
\end{proof}

Finally, we consider the case where $M$ is diagonalizable and both eigenvalues are equal, so $M$ is a scalar multiple of that identity matrix.  In this case it is clear that $(M, z)$ fixes the line $\langle z \rangle$. 

\begin{lem}
If $M$ is a scalar multiple of the identity matrix, then $(M, z)$ is not a derangement for any $z$.
\end{lem}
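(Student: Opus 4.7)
The plan is to exhibit an explicit line fixed set-wise by $(M,z)$ in every case, which immediately shows $(M,z)$ is not a derangement. Write $M = \lambda I$ for some $\lambda \in \mathbb{F}_q^*$ (recall $M \in \gl{q}$ is invertible, so $\lambda \neq 0$), and let $z \in \mathbb{F}_q^2$ be arbitrary.

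First I would handle the main case $z \neq 0$. Take the line through the origin $\ell = \langle z \rangle = \{tz : t \in \mathbb{F}_q\}$. A direct calculation gives
\[
(M,z)(tz) = \lambda(tz) + z = (\lambda t + 1)z,
\]
and since $\lambda \neq 0$ the map $t \mapsto \lambda t + 1$ is a bijection of $\mathbb{F}_q$. Hence $(M,z)$ permutes the points of $\ell$ and fixes $\ell$ set-wise.

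The remaining case is $z = 0$, where $(M,0)$ acts on $\mathbb{F}_q^2$ as scalar multiplication by $\lambda$. Then any line through the origin $\langle v \rangle$ is sent to $\langle \lambda v \rangle = \langle v \rangle$, so every line through $0$ is fixed. In either case $(M,z)$ fixes some line, establishing the claim. There is no genuine obstacle here; the argument is the one already foreshadowed in the paragraph preceding the lemma and merely needs to be recorded, with the small care of observing that $t\mapsto \lambda t + 1$ is a bijection so the set $\langle z\rangle$ really is permuted rather than just mapped into itself.
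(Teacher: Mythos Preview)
Your proof is correct and follows exactly the approach the paper takes: the paragraph preceding the lemma already observes that $(M,z)$ fixes the line $\langle z\rangle$, and you have simply written out the verification (and added the easy case $z=0$, which the paper glosses over).
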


In summary, in $\agl{q}$ there is one conjugacy class of derangements of the form $(M, z)$ where $M$ has 1 as its only eigenvalue and $z$ is not an eigenvector for $M$. This class has size $(q^2-1)(q^2-q)$ and we denote it with $C_0$. There is a family of $\binom{q}{2}$ conjugacy classes each of the form $(M, z)$ where $M$ has no eigenvalues. Each conjugacy class in this family has size $q^3(q-1)$, we will label these conjugacy classes by $C_i$ with $i=1, \dots, \binom{q}{2}$.  Further, the permutations in conjugacy classes $C_i$ with $i=1, \dots, \binom{q}{2}$ fix none of the blocks of imprimitivity of $\agl{q}$.

\subsection{Permutation Representation of $\agl{q}$} 

Many of the irreducible representation of $\agl{q}$ arise from a
representation on $\gl{q}$, of the representations that do not arise from an irreducible 
representation of $\gl{q}$, there are $q-1$ with dimension
$q^2-1$, and one with dimension $(q-1)(q^2-1)$.  Since $\agl{q}$ is a rank 3 imprimitive group, it is straightforward to find the permutation representation of it.

\begin{lem}
Let $G$ be an imprimitive group with rank 3. 
Then, the permutation representation of $G$ is the sum of three irreducible representations: 
the trivial representation, $\chi_1$ and $\chi_2$, where $\chi_1$ is the permutation representation from the 
action of $G$ on the blocks.
\end{lem}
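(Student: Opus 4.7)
The plan is to use the classical identification of the rank of a transitive action with the inner product $\langle \pi_G, \pi_G \rangle$, where $\pi_G$ denotes the permutation character, together with the transitivity of induction.

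First, since $G$ is transitive on points, the trivial character $\one$ appears in $\pi_G$ exactly once; writing $\pi_G = \sum_i m_i \chi_i$ in terms of distinct irreducibles, the rank of $G$ equals $\sum_i m_i^2$. So rank $3$ forces $\pi_G = \one + \chi_1 + \chi_2$ for two distinct nontrivial irreducibles $\chi_1,\chi_2$; in particular, $\pi_G$ is multiplicity-free with exactly three irreducible constituents.

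Next, let $H$ be the stabilizer of a block in the system of imprimitivity and $G_p$ the stabilizer of a point $p$ inside that block, so that $G_p \lneq H \lneq G$ (both inclusions are proper because a block has size strictly between $1$ and the number of points). By transitivity of induction,
\[
\pi_G \;=\; \Ind_{G_p}^G(\one) \;=\; \Ind_H^G\bigl(\Ind_{G_p}^H(\one)\bigr),
\]
and Frobenius reciprocity gives $\langle \Ind_{G_p}^H(\one), \one_H \rangle = 1$, so $\Ind_{G_p}^H(\one) = \one_H + \tau$ for some character $\tau$ of $H$. Inducing up, $\pi_G = \pi_H + \Ind_H^G(\tau)$ where $\pi_H := \Ind_H^G(\one_H)$ is the permutation character for the action on the blocks; hence $\pi_H$ is a subcharacter of $\pi_G$.

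Finally, since $\pi_G$ is multiplicity-free with three constituents and $\pi_H$ contains $\one$ exactly once (as $G$ is transitive on blocks), $\pi_H$ must be a subsum of $\one + \chi_1 + \chi_2$ containing $\one$. It cannot equal $\one$ alone (that would mean there is a single block, contradicting imprimitivity), and it cannot equal $\pi_G$ (that would force the number of blocks to equal the number of points, again contradicting imprimitivity). Hence $\pi_H = \one + \chi_1$ after relabeling, which is the desired statement. The one place that requires any care is verifying that $\pi_H$ sits inside $\pi_G$ as an actual subcharacter rather than a virtual one, which is precisely what the induction-in-stages argument above supplies; the rest is bookkeeping on multiplicities.
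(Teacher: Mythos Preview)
Your proof is correct. It differs from the paper's argument in the mechanism used to place the block permutation character $\pi_H$ inside the point permutation character $\pi_G$. The paper computes the inner product $\langle \pi_G,\pi_H\rangle_G$ via Frobenius reciprocity, identifying it with the number of orbits of the point-stabilizer $G_p$ on the set of blocks, and then observes that this number is $2$ (one orbit being the block containing $p$, the other consisting of all remaining blocks, which form a single $G_p$-orbit because the rank-$3$ suborbit outside that block is transitive). From $\langle \pi_G,\pi_H\rangle=2$ together with the multiplicity-free decomposition $\pi_G=\one+\chi_1+\chi_2$, the paper then reads off that $\pi_H=\one+\chi_i$ for some $i$.

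You instead use transitivity of induction through the chain $G_p\lneq H\lneq G$ to exhibit $\pi_H$ as a genuine subcharacter of $\pi_G$, and then finish by a pigeonhole on the (at most three) constituents. Your route avoids having to check the orbit count of $G_p$ on blocks and makes the ``$\pi_H$ is an honest subcharacter, not just sharing some constituents'' step completely explicit; the paper's route, on the other hand, yields the numerical value $\langle\pi_G,\pi_H\rangle=2$ directly, which also tells you that the action on blocks is $2$-transitive. Both are short and standard; yours is a touch more self-contained.
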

\begin{proof}
Since the group has rank, 3 it is clear that the permutation representation is the sum of 3 distinct irreducible representations, 
one of which must be the trivial character.

Let $\chi$ be the permutation representation of $G$, and $\chi_1$ the permutation representation of $G$ for the action of $G$ on the blocks.  Let $G_1$ denote the stabilizer of a point in $G$ and $G_B$ the stabilizer of a block.
Then 
\[
\langle \chi, \chi_1 \rangle_G
\langle \Ind( 1_{G_1}) ^{G}, \Ind( 1_{G_B} )^{G} \rangle_G \,
=
\langle  1_{G_1}, \Res \left(  ( \Ind( 1_{G_B} )^{G} ) \right)_{G_1} \rangle_{G_1}
\]
This equals the number of orbits $G_1$ has on the blocks, which is 2.
Both representations include the trivial representation, so $\chi$ includes $\chi_1$ with multiplicity 1.
\end{proof}

In particular, $\chi_1$ is the permutation representation from the action of  $\agl{q}$ on the blocks, 
minus the trivial representation, so $\chi_1(g) = \fix_{blocks}(g) - 1$. This is the $q$-dimensional representation arising 
from the representation $\overline{\rho}(1)$ of $\gl{q}$. Further, $\chi_2 = \fix(g) - \fix_{blocks}(g)$ is an irreducible degree $q^2-1$ 
permutation representation of $\agl{q}$ and $\chi_2$ restricted to $\gl{q}$ is the permutation representation on $\gl{q}$.

Since the permutations in the conjugacy classes $C_i$ with $i \in \{1,\dots, \binom{q}{2} \}$ do not fix any of the 
blocks, $\chi_1(x) = -1$ for each $x \in C_i$. Further, $\chi_1(x)=0$ for any $x \in C_0$, since these permutations 
fix exactly one block. 

To apply the method used for $\gl{q}$ and $\slg{q}$, a weighting must be found for the conjugacy classes so that
$\lambda_{\chi_1} \geq -1$ and $\lambda_{\chi_2} \geq -1$ with $\lambda_{\one}$ is maximized. It is possible to give a 
formula for this eigenvalue where the weighting on $C_i$ is denoted by $a_i$:
\[
\lambda_{\chi_1} = \frac{1}{q} \left( a_0 |C_0| 0 + \sum_{i=1}^{\binom{q}{2}} a_i |C_i| (-1) \right)
 =-q^2(q-1) \sum_{i=1}^{\binom{q}{2}} a_i
 \]
 and
 \[
\lambda_{\chi_2} = \frac{1}{q^2-1} \left( a_0 |C_0| (-1) + \sum_{i=1}^{\binom{q}{2}} a_i |C_i| ( 0 )  \right)
 = -a_0 (q^2-q).
\]
It is straight-forward to see that an appropriate weighting will have both
\[
a_0 \leq  \frac{1}{q^2-q},   \qquad \sum_{i=1}^{\binom{q}{2}} a_i \leq \frac{1}{q^2(q-1)}.
\] 
As predicted by Lemma~\ref{lem:bestwecan}, the value of the trivial character is
\[
\lambda_{\chi_1} 
= \left( a_0 |C_0| 1 + \sum_{i=1}^{\binom{q}{2}} a_i |C_i| (1) \right)
\leq \frac{ (q^2-1)(q^2-q) }{q^2-q} + \frac{q^3(q-1)}{q^2(q-1)} 
= q^2-1+ q. 
 \]
The equation in the ratio bound gives
\[
\alpha \leq \frac{(q-1)q^3(q+1)}{1- \frac{q^2+q-1}{-1}} = \frac{(q-1)^2q^3(q+1)}{q^2+q} = (q-1)^2q^2. 
\]
But this is not a bound on the size of a coclique, since we will see in the next section that there is a larger coclique. 
The reason that this does not give a bound is that there will be other irreducible characters with eigenvalue smaller than -1.

\subsection{Intersecting sets in $\agl{q}$}

In this section we prove Theorem~\ref{thm:AGL}. First we will give a weak upper bound on the size of an intersecting set in 
$\agl{q}$. Second, we will show that $\agl{q}$ does not have the EKR property by constructing cocliques in 
$\Gamma_{\agl{q}}$ that are larger than the stabilizer of a point. First we not that there is a subgroup in $\agl{q}$ in which every element except the identity is a derangement.

\begin{lem}
There is a subgroup in $\agl{q}$ of size $q+1$ in which all non-identity elements are derangements.
\end{lem}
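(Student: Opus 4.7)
The plan is to lift a Singer subgroup of $\gl{q}$ into $\agl{q}$. From Lemma~\ref{lem:clique}, $\gl{q}$ has a cyclic Singer cycle $\Sigma$ of order $q^2-1$, coming from the companion-matrix embedding of $\mathbb{F}_{q^2}^*$. Inside $\Sigma$ there is a unique cyclic subgroup $K$ of order $q+1$, generated by $h^{q-1}$ for any generator $h$ of $\Sigma$. Because $\gcd(q+1,q^2)=1$, the map $M\mapsto(M,0)$ embeds $K$ as a subgroup $H\leq\agl{q}$ of order $q+1$; the lemma then reduces to checking that every non-identity element of $H$ acts on lines without a fixed line.

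By the derangement analysis just carried out in this section, $(M,0)$ is a derangement on lines if and only if $M$ has no eigenvalue in $\mathbb{F}_q$ (the unipotent case $C_0$ does not arise when the translation part is $0$). If $\alpha\in\mathbb{F}_{q^2}^*$ is the primitive element corresponding to $h$, then the non-identity elements of $K$ are the matrices $M_j=h^{j(q-1)}$ for $1\leq j\leq q$, with Galois-conjugate eigenvalues $\alpha^{j(q-1)}$ and $\alpha^{jq(q-1)}$ in $\mathbb{F}_{q^2}$. These lie in $\mathbb{F}_q$ exactly when $\alpha^{j(q-1)^2}=1$, i.e.\ when $(q+1)$ divides $j(q-1)$, so the proof reduces to ruling out this single divisibility for $1\leq j\leq q$.

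The main obstacle will be the divisibility step, with the argument depending on the parity of $q$. When $q$ is even, $\gcd(q+1,q-1)=1$ forces $(q+1)\mid j$, impossible in the given range, so every non-identity element of $H$ has irreducible characteristic polynomial over $\mathbb{F}_q$ and the conclusion is immediate. When $q$ is odd the single value $j=(q+1)/2$ is admissible and gives $M_j=h^{(q^2-1)/2}=-I$, so $(-I,0)\in H$ fixes each of the $q+1$ lines through the origin and the naive lift falls short; the remaining combinatorial work — which is the substantive part of the proof in the odd characteristic case — is to modify the construction so that this single scalar involution is traded for a genuine derangement, for instance by finding a different complement of the translation subgroup inside the preimage of $K$ under the projection $\agl{q}\twoheadrightarrow\gl{q}$, while preserving both the group structure and the size $q+1$.
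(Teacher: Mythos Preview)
Your argument for $q$ even is correct and yields essentially the same subgroup the paper has in mind: the order-$(q+1)$ part of a Singer cycle, embedded in $\agl{q}$ with trivial translation part, acts as a $(q+1)$-cycle on the parallel classes, so every nonidentity power fixes no block and hence no line.

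For $q$ odd your diagnosis of the obstruction is right, but the proposed repair cannot succeed, and in fact the statement itself fails. Any complement of the translation subgroup in the preimage of $K$ is a translation-conjugate of $K\times\{0\}$, so it still contains an element of the form $(-I,z)$; since $-I$ is scalar, the paper's own lemma on scalar matrices shows $(-I,z)$ fixes a line for every $z$. More decisively, for $q$ odd the number $q+1$ is even, so by Cauchy any subgroup of order $q+1$ contains an involution $(M,z)$; but $M^2=I$ forces the eigenvalues of $M$ into $\{\pm1\}\subset\mathbb{F}_q$, placing $(M,z)$ outside every derangement class described in this section. Hence no subgroup of order $q+1$ can consist entirely of derangements when $q$ is odd. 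The paper's one-line proof has the same defect: an element of $\agl{q}$ that restricts to a $(q+1)$-cycle on the blocks cannot itself have order $q+1$ when $q$ is odd, so the cyclic group it generates is too large. What the application to the clique--coclique bound actually needs is only a \emph{clique} of size $q+1$ in $\Gamma_{\agl{q}}$, and that does exist: for instance $\{(h^{j},0):0\le j\le q\}$, with $h$ a Singer cycle of $\gl{q}$, is such a clique, since $h^{k}$ has no $\mathbb{F}_q$-eigenvalue for $1\le|k|\le q$.
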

\begin{proof}
Such a group is the cycle subgroup generated from any permutation that, when restricted to the blocks is a $(q+1)$-cycle.
\end{proof}

Just as in Lemma~\ref{lem:clique}, this implies that the derangement graph $\Gamma_{\agl{q}}$ has a clique of size $q+1$.
 Theorem~\ref{thm:AGL} follows from this lemma by the clique-coclique bound, Theorem~\ref{thm:ccl}, since if $\mathcal{F} \subset \agl{q}$ is intersecting, then $|\mathcal{F}|\leq \frac{ | \agl{q} | }{q+1} = q^3(q-1)^2$.

Next we construct a set of intersecting permutations from $\agl{q}$ that is larger than the canonical intersecting set. To do this we first need some facts.

\begin{prop}\label{prop:blockespace}
If $(M, z)$ fixes the block $B$ and $\ell = \langle v \rangle$ is the line of $B$ through 0, then $v$ is an eigenvector of $M$.
\end{prop}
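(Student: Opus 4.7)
The plan is to trace through what it means for $(M,z)$ to fix the block $B$ and to extract the eigenvector condition from the parallelism of the lines in $B$. Recall that each block of imprimitivity for $\agl{q}$ on lines is a parallel class, so $B$ consists precisely of the $q$ lines parallel to $\ell = \langle v \rangle$; equivalently, a line belongs to $B$ if and only if its direction vector is a nonzero scalar multiple of $v$.

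First I would compute $(M,z)(\ell)$ directly. Since $\ell = \{tv : t \in \mathbb{F}_q\}$, applying the affine map $(M,z)$ gives
\[
(M,z)(\ell) = \{M(tv) + z : t \in \mathbb{F}_q\} = \{t(Mv) + z : t \in \mathbb{F}_q\},
\]
which is the line with base point $z$ and direction $Mv$. Since $(M,z)$ fixes $B$ setwise and $\ell \in B$, the line $(M,z)(\ell)$ must also belong to $B$, hence must be parallel to $\ell$. Two lines are parallel exactly when their direction vectors are scalar multiples of one another, so $Mv \in \langle v \rangle$, i.e.\ $v$ is an eigenvector of $M$.

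There is no real obstacle here; the statement is essentially a restatement of the fact that the linear part of an affine transformation governs how directions of lines transform, and that blocks of imprimitivity are parallel classes. The only care required is to note that $v \neq 0$ (since $\ell$ is a line through $0$ with direction $v$), so $\langle v \rangle$ is a genuine one-dimensional eigenspace and the conclusion $Mv \in \langle v \rangle$ gives a bona fide eigenvalue relation $Mv = \lambda v$ for some $\lambda \in \mathbb{F}_q^*$ (nonzero because $M$ is invertible).
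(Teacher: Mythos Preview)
Your proof is correct and follows essentially the same approach as the paper's: both compute the image $(M,z)(\ell)$, observe that it must lie in the block $B$ and hence be parallel to $\ell$, and conclude that $Mv \in \langle v \rangle$. The paper phrases this by tracking the images of the two specific points $0$ and $v$ (yielding $z$ and $Mv+z$ on the image line $\ell+z$), while you describe the image line via its direction vector $Mv$; the content is the same.
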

\begin{proof}
Assume that $M$ fixes $B$ and $\ell = \langle v \rangle$  is the line of $B$ that includes the zero vector. Since 
$(M, z)$ maps the 0-vector to $z$, we know that $(M, z)$ maps the line $\ell$ to $\ell+z$, in particular,
$(M, z)( v ) = Mv +z$ is on the line $\ell +z$. This means that $Mv$ is on the line $\ell$, so $v$ is an eigenvector of $M$. 
\end{proof}

\begin{lem}
Let $(M, w)$ be an element of  $\agl{q}$. If $(M, w)$ fixes two of the
blocks of imprimitivity, then $(M, w)$ fixes a line.
\end{lem}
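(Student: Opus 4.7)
The plan is to reduce to a case analysis on the eigenvalues of $M$ and invoke results already proved in the section.

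By Proposition~\ref{prop:blockespace}, each block $B$ of imprimitivity fixed by $(M,w)$ yields an eigenvector of $M$: namely the direction vector $v$ of the unique line of $B$ through $0$. Since different blocks correspond to different parallel classes, their direction vectors span $\mathbb{F}_q^2$. So if $(M,w)$ fixes two blocks, then $M$ has two linearly independent eigenvectors $v_1,v_2$; in particular $M$ is diagonalizable over $\mathbb{F}_q$.

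From here I would split into two cases. First, if the two eigenvalues of $M$ are distinct, then $M$ has two distinct eigenvalues in $\mathbb{F}_q$, and Lemma~\ref{lem:twodistinct} immediately gives that $(M,w)$ is not a derangement on lines, hence fixes a line. Second, if the two eigenvalues coincide, then since $M$ is diagonalizable with two linearly independent eigenvectors sharing one eigenvalue, $M$ must be a scalar matrix $\mu I$. I would then subdivide further:
\begin{itemize}
\item If $\mu \neq 1$, then $(\mu I - I)$ is invertible, so there is a point $p = -(\mu-1)^{-1}w$ with $(M,w)(p) = p$, and $(M,w)$ fixes every line through this fixed point.
\item If $\mu = 1$, then $(M,w) = (I,w)$ is a pure translation; if $w=0$ it is the identity, and otherwise $(I,w)$ sends any line $\langle v\rangle + u$ to $\langle v\rangle + u + w$, which is the same line exactly when $w \in \langle v\rangle$. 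Taking $v = w$ then shows $(I,w)$ fixes the line $\langle w \rangle$ (and in fact the entire parallel class of $w$).
\end{itemize}

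In each case a fixed line is exhibited, so the lemma follows. The only ``obstacle'' is really bookkeeping: making sure that in the scalar case one handles $\mu=1$ and $\mu\neq 1$ separately, since the first lacks a fixed point but still fixes lines by translation invariance along the direction of $w$, while the second has a unique affine fixed point producing a whole pencil of fixed lines. No new computation beyond what already appears in the derangement classification of Section~4.1 is needed.
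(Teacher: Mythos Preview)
Your proof is correct and follows essentially the same approach as the paper: use Proposition~\ref{prop:blockespace} to deduce that $M$ is diagonalizable, then case on the eigenvalues. The paper organizes the cases slightly differently---it groups ``at least one eigenvalue $\neq 1$'' into a single case handled by the computation of Lemma~\ref{lem:twodistinct}, whereas you separate the scalar case $M=\mu I$ with $\mu\neq 1$ and dispose of it via the fixed point $p=-(\mu-1)^{-1}w$---but the content is the same.
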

\begin{proof}
  Assume that $(M,w)$ fixes the blocks $B_1$ and $B_2$. Let $\ell_1$
  and $\ell_2$  be the lines through the zero vector in $B_1$ and
  $B_2$ (respectively). By Proposition~\ref{prop:blockespace}, $M$ is diagonalizable and $\ell_1$ and $\ell_2$ 
  are eigenspaces of $M$. Let $\mu_1$ and $\mu_2$ be the eigenvalues of $M$ corresponding to
$B_1$ and $B_2$.

\textbf{Case 1.} Assume one of $\mu_1$ and $\mu_2$ is not equal to 1, so without generality we
can assume $\mu_2 \neq 1$. Following the proof of Lemma~\ref{lem:twodistinct}, 
this implies $(M,w)$ fixes the line $\ell_1 + \frac{-1}{\mu_2-1}w$ in $B_1$. 

\textbf{Case 2.} If $\mu_1 = \mu_2 = 1$ then $M$ is the identity
matrix and $(M,w)$ is a shift. Let $\ell$ be the line that contains
the zero vector and $w$. Then $(M,w)$ fixes $\ell$ as well as every other line in
the block that contains $\ell$. In fact, as long as $w$ is not the zero
vector, any such a $(M,w)$ fixes $q$ lines.
\end{proof}

\begin{lem}
The stabilizer of the blocks in $\agl{q}$ is an intersecting set of size $q^2(q-1)$.
\end{lem}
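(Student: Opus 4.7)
The plan is to first identify the block-stabilizer subgroup $K \le \agl{q}$ explicitly, then verify that every one of its elements fixes at least one line, and finally conclude the intersecting property from the fact that $K$ is a subgroup.

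First I would describe $K$. An element $(M,z)$ lies in $K$ exactly when it fixes every block of imprimitivity. By Proposition~\ref{prop:blockespace}, if $(M,z)$ fixes the block whose line through $0$ is $\langle v\rangle$, then $v$ is an eigenvector of $M$. If $(M,z)$ fixes all $q+1$ blocks, then every one-dimensional subspace of $\fld_q^2$ is an eigenspace of $M$, which forces $M = cI$ for some $c\in\fld_q^*$. Conversely, scalar matrices preserve every direction, so $(cI,z)$ does fix every block for any choice of $z\in\fld_q^2$. Hence
\[
K=\{(cI,z)\;:\;c\in\fld_q^*,\; z\in\fld_q^2\},
\]
and counting gives $|K|=(q-1)q^2=q^2(q-1)$, as desired.

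Next I would show that every element of $K$ fixes at least one line of $\ag(2,q)$, which means it is not a derangement in the line action. Split on whether $c=1$. If $c=1$, then $(I,z)$ is the translation by $z$; it fixes every line in the direction $\langle z\rangle$ (and every line if $z=0$). If $c\neq 1$, then $(cI,z)$ has a unique affine fixed point $p=(1-c)^{-1}z$; for any direction $v$, the line $L_{p,v}$ satisfies $(cI,z)(p+tv)=p+ctv\in L_{p,v}$, so every line through $p$ is setwise fixed. In either case, $(cI,z)$ fixes a line.

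Finally, since $K$ is a subgroup, for any $g,h\in K$ we have $h^{-1}g\in K$, and by the previous paragraph $h^{-1}g$ fixes some line. This means $g$ and $h$ intersect under the action on lines, so $K$ is an intersecting set of size $q^2(q-1)$. The main (minor) obstacle is the identification step: pinning down that preserving all $q+1$ parallel classes forces $M$ to be a scalar matrix, rather than merely having two eigenspaces as in the case where only two blocks are fixed.
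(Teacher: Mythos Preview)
Your proposal is correct and follows essentially the same approach as the paper: identify the block-stabilizer as $\{(cI,z):c\in\fld_q^*,\,z\in\fld_q^2\}$, count its elements, show each element fixes a line, and use the subgroup property to conclude. The only difference is that the paper cites the preceding lemma (an element fixing two blocks fixes a line) for the line-fixing step, whereas you re-derive that special case directly via the $c=1$ / $c\neq 1$ case split; your version is more self-contained but otherwise the same argument.
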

\begin{proof}
  The stabilizer of the blocks consists of all the elements $(M,z)$ of
  $\agl{q}$ where $M$ is a scalar multiple of the identity.  The
  number of such elements is $q^2(q-1)$.  By the previous result, every
  element has a fixed point. Since the stabilizer of the blocks is a group, this implies that it is 
  an intersecting set.
\end{proof}

Let $\stab$ denote the stabilizer of the blocks in $\agl{q}$. Then, $\stab$ and each of its cosets is an 
intersecting set. Next will show that the union of a subset of these cosets forms a larger intersecting set of permutations.
Note that the quotient of $\agl{q}$ with the stabilizer of the blocks is isomorphic to the group $\pgl{q}$. A pair of permutations $(g,h)$ are 
\textsl{2-intersecting} if there are two distinct points $i$ and $j$ so that $h^{-1}g(i) = i$ and  $h^{-1}g(j) = j$; a set of permutations is 2-intersecting if 
any two elements from the set are 2-intersecting.

\begin{lem}
If $S$ is a $2$-intersecting set of permutations in $\pgl{q}$ (with the action on the $q+1$ blocks), 
 then
 \[
 \displaystyle{ \bigcup_{x \in S} \,  x \stab}
 \]
is an intersecting set in $\agl{q}$.
\end{lem}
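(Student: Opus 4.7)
The plan is to reduce the intersecting property in $\agl{q}$ to the $2$-intersecting property in $\pgl{q}$ via the quotient map, using the fact established in the preceding lemma that any element of $\agl{q}$ fixing two blocks must fix a line.

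First I would note that $\stab$ is precisely the kernel of the action of $\agl{q}$ on the $q+1$ parallel classes, so $\stab$ is a normal subgroup of $\agl{q}$ and the quotient $\agl{q}/\stab$ is naturally isomorphic to $\pgl{q}$ acting on the $q+1$ blocks. Under this identification, an element $x \in S \subseteq \pgl{q}$ corresponds to a coset $x\stab \subseteq \agl{q}$, and the union $\bigcup_{x\in S} x\stab$ is a well-defined subset of $\agl{q}$.

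Next, given any two elements $g_1, g_2$ in this union, I would write $g_1 \in x_1\stab$ and $g_2 \in x_2\stab$ with $x_1, x_2 \in S$. By normality of $\stab$, the product $g_2^{-1}g_1$ lies in the coset $(x_2^{-1}x_1)\stab$, and hence its image in the quotient $\pgl{q}$ is exactly $x_2^{-1}x_1$. Since elements of $\stab$ act trivially on blocks, $g_2^{-1}g_1$ induces the same permutation on the $q+1$ blocks as $x_2^{-1}x_1$. The hypothesis that $S$ is $2$-intersecting says precisely that $x_2^{-1}x_1$ fixes at least two blocks in the action of $\pgl{q}$, so $g_2^{-1}g_1$ fixes at least two blocks in $\agl{q}$.

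Applying the preceding lemma then yields that $g_2^{-1}g_1$ fixes some line $\ell$, i.e.\ $g_1(\ell) = g_2(\ell)$, which means $g_1$ and $g_2$ intersect on lines. As $g_1, g_2$ were arbitrary in the union, this shows the union is an intersecting set in $\agl{q}$. I do not anticipate a genuine obstacle; the only delicate points are the normality of $\stab$ in $\agl{q}$ and the identification of the quotient with $\pgl{q}$, both of which are immediate from the description of $\stab$ as the kernel of the action on blocks.
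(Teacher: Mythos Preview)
Your proof is correct and follows essentially the same route as the paper: both arguments show that for $g_1\in x_1\stab$ and $g_2\in x_2\stab$ the element $g_2^{-1}g_1$ fixes two blocks (because its image in the quotient is $x_2^{-1}x_1$), and then invoke the preceding lemma to conclude it fixes a line. Your version is simply more explicit about the normality of $\stab$ and the identification of the quotient with $\pgl{q}$, which the paper leaves implicit.
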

\begin{proof}
The action of $\pgl{q}$ is the action on the blocks, since $S$ is 2-intersecting for any two elements $x,y \in S$ the permutation $y^{-1}x$ fixes two blocks.
For any two permutations $x\sigma \in x\stab$ and $y\pi \in y\stab$, the permutation $\pi^{-1}y^{-1}x\sigma$ also fixes two blocks and $x\sigma$ and $y\pi$ are intersecting.  
\end{proof}

This motivates finding maximum 2-intersecting sets in $\pgl{q}$. 

\begin{theorem}\label{thm:construction}
If $q$ is odd, then there is a set of 2-intersecting permutations in $\pgl{q}$ with size $(3q-5)/2$.
If $q$ is even, there is a set of 2-intersecting permutations in $\pgl{q}$ with size $(3q-4)/2$.   
\end{theorem}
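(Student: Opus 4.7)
The plan is to exhibit $\mathcal{F}$ as a carefully chosen union of parts of three two-point stabilizers of $\pgl{q}$ acting on $\mathbb{P}^1(\mathbb{F}_q)$. Fix three distinct points $A, B, C \in \mathbb{P}^1(\mathbb{F}_q)$ and let $G_{XY} \leq \pgl{q}$ denote the pointwise stabilizer of $\{X,Y\}$; since $\pgl{q}$ is sharply $3$-transitive, each $G_{XY}$ is cyclic of order $q-1$, and I will parametrize $g \in G_{XY}$ by its multiplier $\mu_X(g) \in \mathbb{F}_q^*$, the scalar $\lambda$ for which $g$ becomes $x \mapsto \lambda x$ in coordinates sending $X$ to $0$ and $Y$ to $\infty$ (so $\mu_Y(g) = \mu_X(g)^{-1}$). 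The key observation is that a non-identity element of $\pgl{q}$ fixing a point $P \in \mathbb{P}^1(\mathbb{F}_q)$ with multiplier $\neq 1$ at $P$ is non-parabolic, and its other fixed point must also lie in $\mathbb{P}^1(\mathbb{F}_q)$ (the fixed-point quadratic over $\mathbb{F}_q$ has both roots in $\mathbb{F}_q$ once one is). Consequently, for $g_1 \in G_{XY}$ and $g_2 \in G_{XZ}$ sharing fixed point $X$, the pair $(g_1, g_2)$ is $2$-intersecting precisely when $\mu_X(g_1) \neq \mu_X(g_2)$.

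Next, I will select a subset $T \subseteq \mathbb{F}_q^* \setminus \{1\}$ containing exactly one element from each pair $\{x, x^{-1}\}$ of distinct inverses and omitting every self-inverse element (i.e.\ roots of $x^2 = 1$). These roots are $\{\pm 1\}$ for $q$ odd and just $\{1\}$ for $q$ even, so $|T| = (q-3)/2$ or $(q-2)/2$ accordingly. Define
\begin{align*}
\mathcal{F}_1 &= \{g \in G_{AB} : \mu_A(g) \in T \cup \{1\}\}, \\
\mathcal{F}_2 &= \{g \in G_{AC} : \mu_A(g) \in \mathbb{F}_q^* \setminus (T \cup \{1\})\}, \\
\mathcal{F}_3 &= \{g \in G_{BC} : \mu_B(g) \in T\},
\end{align*}
and set $\mathcal{F} = \mathcal{F}_1 \cup \mathcal{F}_2 \cup \mathcal{F}_3$. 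Since $G_{XY} \cap G_{XY'} = \{\id\}$ whenever $Y \neq Y'$ and $\id$ appears only in $\mathcal{F}_1$, the three pieces are disjoint, so $|\mathcal{F}| = \tfrac{q-1}{2} + \tfrac{q-1}{2} + \tfrac{q-3}{2} = \tfrac{3q-5}{2}$ for $q$ odd and $|\mathcal{F}| = \tfrac{q}{2} + \tfrac{q-2}{2} + \tfrac{q-2}{2} = \tfrac{3q-4}{2}$ for $q$ even.

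To verify that $\mathcal{F}$ is $2$-intersecting, note that any two elements in the same $\mathcal{F}_i$ share the two fixed points of the common stabilizer. For cross-pairs, the multiplier observation reduces $2$-intersection to a single inequality at the shared fixed point: $\mathcal{F}_1,\mathcal{F}_2$ share $A$ with $A$-multipliers in the disjoint sets $T\cup\{1\}$ and $\mathbb{F}_q^* \setminus (T\cup\{1\})$; $\mathcal{F}_2,\mathcal{F}_3$ share $C$ with $C$-multiplier ratio $\nu/\mu$ where $\nu \in T$ and $\mu \in \mathbb{F}_q^* \setminus (T \cup \{1\})$, hence $\neq 1$; and $\mathcal{F}_1,\mathcal{F}_3$ share $B$ with $B$-multiplier ratio $(\mu\nu)^{-1}$ where $\mu \in T \cup \{1\}$ and $\nu \in T$, which equals $1$ only if $\mu = \nu^{-1}$. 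The main obstacle is this last case, which is ruled out exactly because $T$ contains no inverse pair and $1 \notin T$ (and $-1 \notin T$ for $q$ odd); this inverse-pair restriction is what caps $|T|$ (hence $|\mathcal{F}_3|$) and thereby determines the bounds $(3q-5)/2$ and $(3q-4)/2$.
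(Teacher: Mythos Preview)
Your proof is correct. Both your construction and the paper's realize the $2$-intersecting set as a union of carefully chosen subsets of the three pointwise stabilizers $G_{AB}$, $G_{AC}$, $G_{BC}$ for a fixed triple of projective points, together with the identity; the sizes match by the same arithmetic. The difference lies in how the subsets are specified and how $2$-intersection is verified. The paper picks, for each conjugacy class of non-involution elements with two fixed points, a representative $x$ in $G_{AB}$ and then conjugates by a $3$-cycle $h$ on $\{A,B,C\}$ to obtain elements $hxh^{-1}\in G_{BC}$ and $h^{-1}xh\in G_{AC}$; it checks $2$-intersection across different conjugacy classes by a double-counting argument (each element of one class fixing a common point must agree with $\sigma$ at exactly one further point) and checks $2$-intersection within a conjugacy class by a separate matrix computation. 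Your argument replaces both of these with a single uniform criterion: two elements sharing a fixed point $X$ are $2$-intersecting whenever their multipliers at $X$ differ, because the quotient is then a non-parabolic element of $\pgl{q}$ whose second fixed point is automatically $\mathbb{F}_q$-rational. You then engineer the multiplier sets (via the inverse-pair transversal $T$) so that this inequality is forced for every cross pair. Your verification is thus more streamlined and coordinate-free, while the paper's version has a pleasing $3$-cycle symmetry; either way the same obstruction (no inverse pair in $T$, hence $|T|=(q-3)/2$ or $(q-2)/2$) produces the stated bounds. One minor quibble: the ``precisely when'' in your multiplier criterion is not literally true when $g_1=g_2=\id$, but you only use the forward implication, so the argument is unaffected.
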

\begin{proof}

We first consider the case when $q$ is odd, the case for $q$ even is almost identical. We will construct a 2-intersecting set of permutations with size $(3q-5)/2$. This set will contain the identity, as well as $(3q-7)/2$ permutations with two fixed points.
In $\pgl{q}$ any non-identity element has at most 2 fixed points, so no two permutations can agree on more than 2 elements.
When $q$ is odd there are $(q-3)/2$ conjugacy classes, each of size $q(q+1)$, of permutations 
each with exactly two fixed points in which the elements are not involutions; call these the conjugacy classes of Type 1. There is another conjugacy class of size $q(q+1)/2$ with permutations that also have exactly two fixed points and in which the elements are involutions; we will call this the conjugacy class of Type 2.

For any $i \in \{ 1, \dots, q+1\}$ and each conjugacy classes of Type 1 there are exactly $2q$ permutations in the class that fix $i$.
Further, for any $j, k \in \{1, \dots ,q+1\} \backslash [i]$, there are exactly 2 permutations in each conjugacy 
classes of Type 1 that fix $i$ and map $j$ to $k$.

Let $\sigma$ be a permutation in a conjugacy class of Type 1 that fixes $i$. Define the $q$ pairs
\[
\mathcal{P} = \{  (j, k) \, : \,  j^\sigma=k, \quad j, k \in \{1, \dots ,q+1\} \backslash i    \}.
\]
Consider any conjugacy class of Type 1 that does not contain $\sigma$. There are no permutations $\pi$ in this conjugacy class 
that fix $i$ and have two distinct pairs  $(j_1,k_1), (j_2, k_2) \in \mathcal{P}$ such that $j_1^\pi = k_1$ and $j_2^\pi =k_2$.  
Since there are exactly $2q$ permutations in the class that fix $i$, for any pair $(j,k) \in \mathcal{P}$, there are exactly two 
permutations in the conjugacy class that fix $i$ and map $j$ to $k$. Since there are $q$ pairs in $\mathcal{P}$, counting shows that 
each permutation in this conjugacy class that fixes $i$, must also map $j$ to $k$ for exactly one pair $(j,k) \in \mathcal{P}$. In general, this means that for any two permutations from different conjugacy classes of Type 1, if they both fix a common element, then they are actually 2-intersecting. We will use this fact to build a 2-intersecting set of permutations.

For each conjugacy class of Type 1, let $x$ be one of the two permutations that fix both $1$ and $2$. We can assume without loss of generality that $x$ is the diagonal matrix with entries 1 and $a$. Fix $h$ to be the unique permutation that maps $1$ to $2$, $2$ to $3$ and $3$ to $1$; $h$ is the product to 3-cycles and has order 3. Select the set of permutations $\{x, hxh^{-1}, h^{-1}xh \}$. These three permutations all belong to the same conjugacy class and it can easily be seen (by multiplying the matrix representative of the permutations) that they are pairwise 2-intersecting. Further, $x$ fixes the points 1 and 2, $hxh^{-1}$ fixes the points 2 and 3 and $h^{-1}xh$ fixes the points 1 and 3.

If we do this for each conjugacy class of Type 1, we get a set of $3 \frac{q-3}{2}$ permutations. Any two permutations from this set that belong to the same conjugacy class are 2-intersecting. Any two permutations in this set that are from two different conjugacy classes agree on a fixed point (every permutation in this set will fix at least two of the points 1, 2 and 3) so they will also be 2-intersecting.

Finally, the permutations in the conjugacy class of Type 2 are involutions. There is a set of three elements that fix at 
least two of $1, 2$ or $3$; this set forms a triangle in the derangement graph. Using the same argument as for the conjugacy classes of Type 1, if any permutation from a conjugacy class of Type 1 and a permutation from the conjugacy class of Type 2 have a common fixed point, then they are 2-intersecting. So any one of these 3 elements from the conjugacy class of Type 2 can be added to the set. Finally the identity can be added to the set as well to produce a 2-intersecting set of size
\[
3 \frac{q-3}{2}+1 + 1 = \frac{3q-5}{2}.
\]

The same argument work when $q$ is even, but in this case the number of conjugacy classes of Type 1 is $\frac{q-2}{2}$ and there is no conjugacy class of Type 2.
\end{proof}

From these intersecting sets in $\pgl{q}$ it is possible to build intersecting sets in $\agl{q}$.

\begin{theorem}\label{thm:maxinAGL}
For $q$ is odd, there is an intersecting set in $\agl{q}$ with size $\frac{q^2(q-1)(3q-5)}{2} $, and for $q$ even, there is an intersecting set in $\agl{q}$ with size $\frac{q^2(q-1)(3q-4)}{2} $.
\end{theorem}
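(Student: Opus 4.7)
The plan is to combine Theorem~\ref{thm:construction} with the coset construction developed in the lemma preceding it. Let $S \subset \pgl{q}$ be a $2$-intersecting set of the size guaranteed by Theorem~\ref{thm:construction}. Since $\stab$ is the kernel of the quotient map $\agl{q} \twoheadrightarrow \agl{q}/\stab \cong \pgl{q}$, we may lift $S$ to a set of coset representatives $\widetilde{S} \subset \agl{q}$ with $|\widetilde{S}| = |S|$. Then form
\[
\mathcal{F} \;=\; \bigcup_{x \in \widetilde{S}} \, x \stab.
\]

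First I would verify that $\mathcal{F}$ is intersecting: this is exactly the statement of the lemma immediately preceding Theorem~\ref{thm:construction}, once we observe that $2$-intersection in $\pgl{q}$ (acting on the $q+1$ blocks) lifts to intersection in $\agl{q}$ because an element of $\agl{q}$ that fixes two blocks also fixes a line, as established earlier in this section.

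Second, I would compute $|\mathcal{F}|$. Because the elements of $\widetilde{S}$ lie in distinct cosets of $\stab$, the union is disjoint, so
\[
|\mathcal{F}| \;=\; |S| \cdot |\stab| \;=\; |S| \cdot q^2(q-1).
\]
Substituting the value of $|S|$ from Theorem~\ref{thm:construction} yields $\frac{q^2(q-1)(3q-5)}{2}$ when $q$ is odd and $\frac{q^2(q-1)(3q-4)}{2}$ when $q$ is even, which are the claimed bounds.

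There is no real obstacle here, since every ingredient has already been proved: the size and intersecting property of $\stab$, the fact that unions of cosets indexed by a $2$-intersecting set remain intersecting, and the explicit construction of large $2$-intersecting sets in $\pgl{q}$. The only point that deserves an explicit sentence is the disjointness of the cosets $x\stab$, which is automatic from choosing $\widetilde{S}$ as a set of coset representatives for the distinct elements of $S$ under the isomorphism $\agl{q}/\stab \cong \pgl{q}$.
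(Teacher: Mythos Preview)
Your proposal is correct and matches the paper's approach exactly: the paper does not even write out a separate proof for this theorem, presenting it instead as an immediate consequence of Theorem~\ref{thm:construction} together with the preceding lemma on unions of cosets of $\stab$ indexed by a $2$-intersecting set in $\pgl{q}$. Your additional remarks about lifting $S$ to coset representatives and the disjointness of the cosets are correct and simply make explicit what the paper leaves implicit.
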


Since these sets are larger than the stabilizer of the point in $\agl{q}$, this group does not have the EKR property.

\begin{cor}
The group $\agl{q}$ acting on the lines does not have the EKR property.
\end{cor}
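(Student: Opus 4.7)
The plan is to derive the corollary directly from Theorem~\ref{thm:maxinAGL} together with a size computation for a point-stabilizer in the action on lines. The action of $\agl{q}$ on $\mathcal{L}_q$ is transitive, with $|\mathcal{L}_q| = q(q+1)$ and $|\agl{q}| = q^2\,|\gl{q}| = q^3(q-1)^2(q+1)$, so every stabilizer of a line, and hence every canonical intersecting set, has size
\[
\frac{|\agl{q}|}{|\mathcal{L}_q|} = q^2(q-1)^2.
\]
If $\agl{q}$ had the EKR property, this would be the largest possible size of an intersecting set.

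To refute this, I would compare the above value with the sizes produced in Theorem~\ref{thm:maxinAGL}. For $q$ odd the constructed intersecting set has size $\tfrac{q^2(q-1)(3q-5)}{2}$, and $\tfrac{3q-5}{2} > q-1$ is equivalent to $q > 3$. For $q$ even the constructed size is $\tfrac{q^2(q-1)(3q-4)}{2}$, and $\tfrac{3q-4}{2} > q-1$ is equivalent to $q > 2$. In either parity, for $q \geq 4$ the union $\bigcup_{x \in S} x\stab$ from Section~4 is strictly larger than any canonical intersecting set, so $\agl{q}$ fails the EKR property.

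There is no real obstacle beyond this two-line arithmetic comparison; the substantive work lies in Theorems~\ref{thm:construction} and \ref{thm:maxinAGL}, which are already established. The only case that requires care is when $q \in \{2,3\}$, where the two bounds coincide; here one would either note that the constructed intersecting set is not a coset of a point-stabilizer (so it furnishes an extra maximum coclique and thus at least precludes strict-EKR, with EKR itself needing a short direct check), or verify the statement by a finite computation. For the generic statement of the corollary, however, the displayed inequality above suffices.
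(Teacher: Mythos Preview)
Your approach is exactly the paper's: compare the intersecting sets of Theorem~\ref{thm:maxinAGL} against the stabilizer size $q^2(q-1)^2$ and observe the former is larger. The paper's own proof is the single sentence preceding the corollary; you are in fact more careful than the paper in flagging that the inequality is strict only for $q\ge 4$, while $q\in\{2,3\}$ gives equality and would need the later computational lemma (maximum coclique sizes $45$ and $192$) to settle---the paper glosses over this.
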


From Theorem~\ref{thm:construction} and  Theorem~\ref{thm:AGL} we know that the size of the maximum intersecting sets in $\agl{q}$ is between $\frac{q^2(q-1)(3q-5)}{2} $ and $q^3(q-1)^2$, (for $q$ odd). These two bounds are quite far apart, and we believe that the actual size of the maximum intersecting sets is much smaller than the bound in Theorem~\ref{thm:AGL}.  We conjecture the following.
\begin{conj}
	 If $\mathcal{F} \subset\agl{q}$ is intersecting, then there exist $a\in \mathbb{N}$ and $b\in \mathbb{Z}$ such that 
	 $|\mathcal{F}| \leq q^2(q-1)(aq+b)$.
\end{conj}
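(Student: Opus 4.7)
The plan exploits the imprimitivity of $\agl{q}$: the block stabilizer $\stab$ has order $q^2(q-1)$, is normal in $\agl{q}$, and the quotient $\agl{q}/\stab$ is isomorphic to $\pgl{q}$ acting on the $q+1$ blocks. Given an intersecting $\mathcal{F}\subseteq\agl{q}$, write $\mathcal{F}=\bigsqcup_{\pi\in S}\mathcal{F}_\pi$, where $S\subseteq\pgl{q}$ is the image of $\mathcal{F}$ under the quotient and $\mathcal{F}_\pi=\mathcal{F}\cap\pi\stab$. Since $\stab$ is itself intersecting (as shown just before Theorem~\ref{thm:maxinAGL}), each $\mathcal{F}_\pi$ is an intersecting subset of the coset $\pi\stab$, so $|\mathcal{F}_\pi|\leq|\stab|=q^2(q-1)$. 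The conjecture is equivalent to an $O(q)$ bound on the ``weighted size'' $|\mathcal{F}|/|\stab|=\sum_{\pi\in S}|\mathcal{F}_\pi|/|\stab|$.

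The first step is to classify which cosets $y\stab$ contain non-derangements. A direct case analysis mirroring the derangement classification in the previous subsection gives: the coset $y\stab$ is entirely non-derangement exactly when $\overline{y}\in\pgl{q}$ has at least two fixed blocks (including the identity); it is entirely derangement when $\overline{y}$ has no fixed block; and it contains exactly $q(q-1)^2$ non-derangements when $\overline{y}$ is parabolic (one fixed block). Three consequences follow: (i) $S$ is $1$-intersecting in the block action of $\pgl{q}$; (ii) the subset $T=\{\pi\in S:|\mathcal{F}_\pi|=|\stab|\}$ of full cosets in $\mathcal{F}$ is $2$-intersecting in $\pgl{q}$ on blocks; and (iii) whenever $\pi_1^{-1}\pi_2$ is parabolic in $\pgl{q}$, a double-count of the map $(f_1,f_2)\mapsto f_1^{-1}f_2$ onto the non-derangements of the parabolic coset yields $|\mathcal{F}_{\pi_1}|\cdot|\mathcal{F}_{\pi_2}|\leq q(q-1)^2\cdot\min(|\mathcal{F}_{\pi_1}|,|\mathcal{F}_{\pi_2}|)$, forcing $\max(|\mathcal{F}_{\pi_1}|,|\mathcal{F}_{\pi_2}|)\leq q(q-1)^2<|\stab|$.

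To assemble the bound I would partition $S$ into the set $T''$ of $\pi$ having no parabolic neighbour in $S$, and its complement $U$. By (i)--(iii), $T''$ is $2$-intersecting in $\pgl{q}$ on the $q+1$ blocks, hence bounded above by the largest $2$-intersecting family in $\pgl{q}$; Theorem~\ref{thm:construction} shows that this maximum is at least $\frac{3q-5}{2}$, and a matching $O(q)$ upper bound, proved by a Cameron--Ku style rigidity argument for the sharply $3$-transitive action of $\pgl{q}$, would give $|T''|\leq O(q)$. Each $\pi\in U$ has $|\mathcal{F}_\pi|\leq q(q-1)^2$ by (iii), so
\[
|\mathcal{F}|\;\leq\;|T''|\cdot q^2(q-1)\,+\,|U|\cdot q(q-1)^2,
\]
and the conjecture follows provided that $|U|\leq O(q)$ as well.

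The main obstacle is bounding $|U|$. A priori one only knows $|S|\leq q(q-1)$ (from the EKR property of the $2$-transitive action of $\pgl{q}$ on blocks), which is off by a factor of $q$. The difficulty is that the parabolic constraint (iii) bounds the product $|\mathcal{F}_{\pi_1}|\cdot|\mathcal{F}_{\pi_2}|$ for each parabolic pair individually but does not restrict how many such pairs can occur in $S$; what one really needs is an aggregate bound showing that a subset of $\pgl{q}$ whose pairwise differences are mostly parabolic must be small, or equivalently a sharp bound on the number of parabolic elements appearing as differences within any $1$-intersecting subset of $\pgl{q}$. The weighted ratio-bound method of Section~\ref{sect:GL} is not available here: as noted in the excerpt, the irreducibles outside the permutation module of $\agl{q}$ produce eigenvalues below $-1$ under any natural weighting, so the proof has to proceed combinatorially via the quotient structure rather than via a global spectral computation.
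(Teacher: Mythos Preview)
The statement you are attacking is a \emph{conjecture} in the paper, not a theorem; the paper gives no proof.  (Indeed, as literally written the statement is vacuous: $a$ and $b$ are allowed to depend on $q$ and on $\mathcal{F}$, so any finite bound works.  The intended reading, which you correctly adopt, is that $a,b$ are absolute constants, i.e.\ the maximum intersecting family has size $O(q^4)$ rather than the $O(q^5)$ coming from Theorem~\ref{thm:AGL}.)

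Your reduction through the quotient $\agl{q}/\stab\cong\pgl{q}$ is sound, and the coset trichotomy (entirely non-derangement when $\overline{y}$ fixes $\geq 2$ blocks; entirely derangement when it fixes none; exactly $q(q-1)^2$ non-derangements in the parabolic case) is correct, as are your consequences (i)--(iii).  But the proposal is not a proof: two quantitative inputs are missing.

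First, the bound $|T''|=O(q)$ requires that every $2$-intersecting family in $\pgl{q}$ on $q+1$ points has size $O(q)$.  The paper does not prove this; it explicitly poses it as an open problem, and Theorem~\ref{thm:construction} supplies only a lower bound.  Saying a ``Cameron--Ku style rigidity argument'' would do it is a hope, not an argument; sharp $3$-transitivity tells you any two elements agree on at most two points, but that by itself does not yield a linear bound on $2$-intersecting families.

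Second, and as you yourself flag, the bound on $|U|$ is simply absent.  Constraint (iii) says two cosets joined by a parabolic difference cannot both be full; it says nothing about how many such cosets can occur.  The only global bound you have on $S$ is $|S|\leq q(q-1)$ from the EKR property of $\pgl{q}$ on points, which is a factor of $q$ too large, and nothing in your outline closes that gap.

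So what you have is a reasonable framework that reduces the conjecture to a weighted intersecting problem in $\pgl{q}$, but both of the needed $O(q)$ bounds remain open.  That is exactly where the paper leaves matters.
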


The group $\agl{3}$ and $\agl{4}$ are sufficiently small that it is possible to find the size of
the largest intersecting set using Grape~\cite{Soi21}. For $q=3$ the bound can be achieved by 
taking 5 cosets of the stabilizer of the blocks. For $q=4$ the bound can be achieved by the union of four cosets of $\stab$.

\begin{lem}
The size of the largest intersecting set in $\agl{3}$ is 45 and in $\agl{4}$ it is 192.  
\end{lem}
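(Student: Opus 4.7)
My approach would be purely computational: both groups are small enough for direct verification, since $|\agl{3}| = 432$ and $|\agl{4}| = 2880$. Concretely, for each $q \in \{3,4\}$ I would (i) realise $\agl{q}$ in GAP via its action on the $q(q+1)$ lines of $\ag(2,q)$; (ii) construct $\Gamma_{\agl{q}}$ in Grape as the Cayley graph on $\agl{q}$ whose connection set is the union of the derangement classes $C_0$ and $C_1, \ldots, C_{\binom{q}{2}}$ identified earlier in this section; and (iii) invoke Grape's maximum-coclique procedure to certify $\alpha(\Gamma_{\agl{3}}) = 45$ and $\alpha(\Gamma_{\agl{4}}) = 192$.

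For the matching lower bounds I would exhibit explicit intersecting families. For $q = 4$ the construction is clean: choose four coset representatives $x_1, \ldots, x_4 \in \agl{4}$ of $\stab$ whose images in $\agl{4}/\stab \cong \pgl{4}$ form a $2$-intersecting set in the action on the five blocks; then $\bigcup_{i} x_i\,\stab$ is intersecting, of size $4 \cdot q^2(q-1) = 192$. For $q = 3$ an analogous family of cosets yields an intersecting set of size $45$; the correct coset data is most easily extracted directly from the output of the coclique search.

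The main obstacle is the coclique search for $q = 4$, since exhaustive enumeration over $2880$ vertices is out of reach. Feasibility hinges on exploiting the vertex-transitivity of $\Gamma_{\agl{q}}$: fix that the identity lies in the hypothetical coclique, then prune by the conjugation action of the point-stabilizer. Grape's backtracking routine performs exactly this symmetry-aware search, which is what makes the computation run in practice; for $q=3$ no such care is needed and the search is essentially instantaneous.
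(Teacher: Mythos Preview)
Your approach is essentially the paper's: the lemma is stated as a Grape computation, and the surrounding text supplies the coset constructions for the lower bounds. One small correction to your write-up: for $q=3$ one has $|\stab|=q^2(q-1)=18$, and $45$ is not a multiple of $18$, so the maximum intersecting set in $\agl{3}$ cannot be a union of cosets of $\stab$ as you suggest. This does not affect the argument, since the Grape search already returns an explicit coclique of size $45$ (so no separate lower-bound witness is needed), but you should drop or amend that sentence.
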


It is straight-forward to solve the linear programming (LP) problem~\eqref{general-linear-program} for $\agl{q}$ 
with small values of $q$. Using Gurobi~\cite{gurobi}, the linear programming bound for $\agl{3}$ give a maximum 
ratio of 5. This solution in the ratio bound proves that intersecting set in $\agl{3}$ is no larger than 72. Similarly, for $q=4$, solving the LP gives a maximum ratio of 9, 
so the best result in the ratio bound is that an intersecting set is no larger than $288$. 
For $\agl{5}$ the best ratio is $9$, giving a bound for an intersecting set of $1200$. This group was too large
for an exhaustive search, but the largest set we were able to find was of size $500$.
For $q=7$ the best ratio is 13, giving a bound of 7056; we were only able to find a set of size $2352$.
It seems that the solution to the (LP) problem~\eqref{general-linear-program} for $q$ odd gives a ratio of $2q-1$, and that this does not give an effective
result in the ratio bound for $q=3,4,5,7$.

\section{Further Work}

In this paper for $\gl{q}$ and $\slg{q}$ only the action on the vectors of $\fld^2_q$ was considered. Questions about the largest 
intersecting sets can be asked for any group action. Since any group action, is equivalent to the action of the group on a set of cosets, this is a relativey tractable problem. 
 Most research on EKR-type results for groups focuses on well-known group actions, there is a growing body of work considering 
 all the actions of a group~\cite{MR3365595, Raghu}. Since the character table $\gl{q}$ is completely understood, it should be straightforward to try 
 this method for different actions of the general linear group. 

It is known that if a group has a regular subgroup, then there is a weighting of the conjugacy classes so that the ratio bound 
holds with equality~\cite[Theorem 3.5]{MahsaMe}. It would be interesting to know of more cases where it can be shown that such a weighting exists.
 
\begin{quest}
If a permutation group $G$ has the EKR property, is there a weighting on the conjugacy classes so that ratio bound holds with equality?
\end{quest}

The group $\agl{q}$ does not have the EKR property, and we were able to construct some larger intersecting set. 
But, we do not know if these sets are the largest possible, in fact we do not even have good bounds on the size of these sets.

\begin{quest}
What are the largest cocliques in the  derangement graphs for $\agl{q}$?
\end{quest}

We conjecture that the maximum intersecting set in $\agl{q}$ will be formed by unions of cosets of the stabilizers of the blocks.
If this is true, then finding the maximum intersecting sets in $\agl{q}$ reduces to finding the maximum 2-intersecting sets 
in $\pgl{q}$, which leads to the next open question.

\begin{quest}
For $q>3$ what is the largest 2-intersecting set in $\pgl{q}$?
\end{quest}

Using Grape~\cite{Soi21} the maximum 2-intersecting sets in $\pgl{q}$ and $\psl{q}$ can be determined for small values of $q$, 
these are recorded in Table~\ref{tab:BestPGL} and Table~\ref{tab:BestPSL}. 
Table~\ref{tab:BestPGL} indicates that the construction in 
Theorem~\ref{thm:construction} is not in general optimal.

\begin{table}
\begin{tabular}{|c|ccc ccc cc|} \hline
$q$  &  3 &4 &5&7&8&9&11&13 \\ \hline  \hline
Size Max. Coclique & 2 &4&5&8&10&12&17 & $\geq 19$\\ \hline
Theorem~\ref{thm:construction} bound &  2 &4&5&8&10&11&14&17\\ \hline
\end{tabular}
\caption{Size of maximum 2-intersecting set in $\pgl{q}$\label{tab:BestPGL}}
\end{table}

\begin{table}
\begin{tabular}{|c|ccc ccc cc|} \hline 
$q$  & 3 &4&5&7&8&9&11&13 \\ \hline  \hline
Size Max. Coclique & 1 &4 & 4&4 &10 & 8 &12 & 12 \\ \hline
\end{tabular}
\caption{Size of maximum 2-intersecting set in $\psl{q}$ \label{tab:BestPSL}}
\end{table}

Pablo Spiga noted that the set-wise stabilizer of subsets of size two of the projective line in $\psl{q}$ when 
$q \equiv 1 \pmod{4}$ forms a 2-intersecting set of size $q-1$~\cite[Problem 43]{CameronWebsite}. 
Our results indicate that this is indeed the largest such intersecting set.
\begin{conj}
Let $q \equiv 1 \pmod{4}$, then the size of the maximum 2-intersecting set in $\psl{q}$ is $(q-1)$.
\end{conj}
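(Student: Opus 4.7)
The plan is to reformulate the conjecture as an EKR-type statement for an auxiliary action of $\psl{q}$ and then attack it via the character-theoretic module method of Section~2. The key observation is that if $g,h \in \psl{q}$ are $2$-intersecting in the natural action on $\mathbb{P}^1(\fld_q)$, so that $h^{-1}g$ fixes two distinct points $i,j$, then $h^{-1}g$ stabilises the $2$-subset $\{i,j\}$ setwise. Hence every $2$-intersecting subset of $\psl{q}$ is also an intersecting set for the transitive action of $\psl{q}$ on the $\binom{q+1}{2}$ unordered pairs of $\mathbb{P}^1(\fld_q)$, the stabiliser of such a pair having order exactly $\tfrac{|\psl{q}|}{\binom{q+1}{2}}=q-1$. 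Spiga's construction already supplies a $2$-intersecting set of size $q-1$ when $q\equiv 1 \pmod 4$, so it suffices to prove that, for such $q$, the action of $\psl{q}$ on $2$-subsets has the EKR-property.

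To prove this EKR-property I would follow the template used for $\gl{q}$ and $\slg{q}$. First, using the character table of $\psl{q}$ from \cite{adams2002character}, identify the conjugacy classes of derangements for the $2$-subset action; these are the classes whose elements neither fix two projective points pointwise nor interchange a pair of them. Second, decompose the permutation character on $2$-subsets into irreducible constituents: since the action has rank~$3$, this character is the sum of the trivial character and two further irreducible constituents. Third, set up the linear programme~\eqref{general-linear-program} and look for weights on the derangement classes such that both non-trivial constituents of the permutation character afford eigenvalue $-1$, while every other irreducible character of $\psl{q}$ affords an eigenvalue $\geq -1$. By Lemma~\ref{lem:bestwecan} and Theorem~\ref{ratioBound}, such a weighting would force
\[
\alpha(\Gamma) \;\leq\; \frac{|\psl{q}|}{\binom{q+1}{2}} \;=\; q-1,
\]
which combined with Spiga's construction would complete the proof.

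The hard part is showing that a weighting realising the LP optimum $\binom{q+1}{2}-1$ actually exists; this amounts to a rigid set of simultaneous sign-and-magnitude constraints on the character values of $\psl{q}$ against the chosen classes of derangements, and the hypothesis $q\equiv 1 \pmod 4$ must enter essentially here. Concretely, $q\equiv 1 \pmod 4$ is exactly the condition that $-1$ is a square in $\fld_q$, and hence that the non-trivial involutions of $\psl{q}$ are split semisimple, fixing two projective points, rather than non-split, interchanging $(q+1)/2$ disjoint pairs. This dichotomy changes both the list of derangement classes for the $2$-subset action and the values of the permutation-character constituents on the involution class; it is also the reason Spiga's construction fails to be $2$-intersecting for $q\equiv 3\pmod 4$, consistent with the breakdown of the conjecture at $q=11$ in Table~\ref{tab:BestPSL}. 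Should the LP fall short of the required optimum, a backup is the clique--coclique bound of Theorem~\ref{thm:ccl}, for which one would need to produce a clique of size $\binom{q+1}{2}$ in the $2$-subset derangement graph; the Singer subgroup of $\psl{q}$ contributes only a clique of size $(q+1)/2$ when $q\equiv 1 \pmod 4$, and it is not obvious how to extend it to the required size.
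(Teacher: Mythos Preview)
The statement you are trying to prove is recorded in the paper as a \emph{conjecture}, not a theorem: the paper offers no proof, only the numerical evidence of Table~\ref{tab:BestPSL} together with Spiga's observation that the set-wise stabiliser of a $2$-subset gives a $2$-intersecting set of size $q-1$ when $q\equiv 1\pmod 4$. So there is no argument in the paper to compare yours against, and what you have written is a proof \emph{strategy}, not a proof --- as you yourself acknowledge when you call the LP feasibility ``the hard part''.

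Within the strategy there is a concrete error. You assert that the action of $\psl{q}$ on unordered $2$-subsets of $\mathbb{P}^1(\fld_q)$ has rank~$3$. It does not. Rank~$3$ on $2$-subsets would require the point stabiliser in $\psl{q}$ to be $2$-transitive on the remaining projective points, but for $q$ odd the stabiliser of $\infty$ is the Frobenius group $\{x\mapsto ax+b : a\in(\fld_q^*)^2\}$, whose point stabiliser has two orbits (squares and non-squares) on the rest. Concretely, for $q=5$ the set-wise stabiliser of $\{0,\infty\}$ in $\psl{5}\cong A_5$ is a Klein four-group, and a direct check shows it has six orbits on the fifteen $2$-subsets, so the action has rank~$6$. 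Your decomposition of the permutation character into ``the trivial character and two further irreducible constituents'' is therefore wrong, and any LP you set up must handle all of the actual constituents.

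This does not kill the overall idea --- the containment ``$2$-intersecting in the point action $\Rightarrow$ intersecting in the $2$-subset action'' is correct and does reduce the conjecture to an EKR statement for that auxiliary action --- but it means the LP has more constraints than you anticipated, and you have not shown it is feasible at the required value $\binom{q+1}{2}-1$. Your fallback via the clique--coclique bound is also, as you note, incomplete. In short, you have outlined a reasonable line of attack on an open problem, but nothing here closes the gap the paper leaves open.
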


\bibliographystyle{plain}

\end{document}